\theoremstyle{definition}
\newtheorem{assumption}{Assumption}[section]
\newtheorem{defi}[assumption]{Definition}
\theoremstyle{remark}
\newtheorem{rem}[assumption]{Remark}
\theoremstyle{plain}
\newtheorem{lemma}[assumption]{Lemma}
\newtheorem{coro}[assumption]{Corollary}
\newtheorem{theorem}[assumption]{Theorem}
\newtheorem{proposition}[assumption]{Proposition}
\newcommand{\End}{\mathrm{End}}
\newcommand{\norm}[1]{\mathrm N(#1)}
\newcommand{\val}{\mathrm{val}}
\newcommand{\Ker}{\mathrm{Ker}}
\newcommand{\Hom}{\mathrm{Hom}}
\newcommand{\Image}{\mathrm{Im}}
\newcommand{\GL}{\mathrm{GL}}
\newcommand{\M}{\mathrm{M}}
\newcommand{\Ext}{\rom{Ext}}
\newcommand{\cyc}{{\rom{cyc}}}
\newcommand{\cpt}{\mathrm{cpt}}
\newcommand{\Q}{\mathbb Q}
\newcommand{\Z}{\mathbb Z}
\newcommand{\R}{\mathbb R}
\newcommand{\C}{\mathbb C}
\newcommand{\fr}{\mathfrak}
\newcommand{\cl }{\mathcal}
\newcommand{\bk}{\backslash}
\newcommand{\wt}{\widetilde}
\newcommand{\mat}[4]{\left(\begin{array}{cc}#1&#2\\#3&#4\end{array}\right)}
\newcommand{\smallmat}[4]{\left(\begin{smallmatrix}#1&#2\\#3&#4\end{smallmatrix}\right)}
\newcommand{\dirlim}{\mathop{\varinjlim}\limits}
\newcommand{\invlim}{\mathop{\varprojlim}\limits}
\def \o {\rom{ord}}
\def \e {\mathfrak e}
\def \ec {\overline{\e}}
\newcommand \ord {{\mathrm{n,ord}}}
\newcommand{\rom}{\mathrm}
\newcommand{\W}{\mathbb W}
\newcommand{\sgn}{\rom{sgn}}
\def \r {\mathfrak r}
\def \n {\mathfrak n}
\def \m {\mathfrak m}
\def \q {\mathfrak q}
\def \frakp {\fr p}
\def \F {F}
\def \F {F}
\def \D {\mathbb{D}}
\def \Gbar {\overline{\Gamma}}
\def \Zbar {\overline{Z}}
\def \Gbar {\overline{\Gamma}}
\def \qpbar {\overline{\Q}_p}
\def \scrf {\mathcal{F}}
\def \scrh {\cl H}
\def \scrd {\mathcal{D}}
\def \scrr {\mathcal{R}}
\def \scro {\cl O}
\def\Q{\mathbb{Q}}
\def\Z{\mathbb{Z}}
\def\part{\partial}
\def\bp{\begin{pmatrix}}
\def\ep{\end{pmatrix}}
\def\bn{\begin{enumerate}}  
\def\en{\end{enumerate}}     
\def\Hom{\mbox{Hom}}
\def\ba{\begin{array}}  
\def\ea{\end{array}}
\begin{document}

\title{$\Lambda$-adic modular symbols and several variable $p$-adic
L-functions over totally real fields}
\author{B. Balasubramanyam and M. Longo}
\date{}

\maketitle
\tableofcontents

\section{Introduction}

Let $F/\Q$ be a totally real field of degree $d$. Denote by $\r$ its
ring of integers. For any ring $A$, define $\widehat
A:=A\otimes_\Z\widehat\Z$, where $\widehat\Z$ is the profinite
completion of $\Z$. Fix a compact open subgroup $S\subseteq
\GL_2(\widehat F)$ such that $U_0(\fr n)\supseteq S\supseteq U_1(\fr
n)$ for some integral ideal $\fr n$ of $F$, where $U_0(\fr n)$
(respectively, $U_1(\fr n)$) are the usual congruence groups defined
in \S \ref{HMF}, Equations \eqref{U0} and \eqref{U1}. Let $p$ be a
rational prime prime to $2\fr n$ which does not ramify in $F$ and
define $S(p^\alpha):=S\cap U(p^\alpha)$ for any non negative integer
$\alpha$, where $U(p^\alpha)$ is defined in \S \ref{HMF}, Equation
\eqref{U}. Fix an embedding $\iota:\overline\Q\hookrightarrow
\overline\Q_p$ and a finite extension $K$ of $\Q_p$ containing
$\iota\circ\mu(F)$ for all archimedean places $\mu$ of $F$. Denote
by $\cal O$ the ring of integers of $K$.

Let $I$ denote the set of embeddings of $\F$ into $\C$. Let $n,v \in
\Z[I]$ be fixed weight vectors such that $n+2v \equiv 0 \mbox{ mod }
\Z t$, where $t=(1,...1) \in \Z[I]$ and let $k:=n+2t$ and
$w:=v+k-t$. Following \cite{H2}, we denote by
$h_{k,w}^\ord(S(p^\alpha),\cl O)$ the Hecke algebra over $\cl O$ for
the space of $p$-nearly ordinary Hilbert cusp forms of weight
$(k,w)$ and level $S(p^\alpha)$. In \S \ref{NOHA} we recall Hida's
construction of the universal $p$-odinary Hecke algebra
\[\cl R\simeq \invlim _\alpha h_{2t,t}^\ord(S(p^\alpha),\cl O).\]
This Hecke algebra is \emph{universal} in the sense that each nearly ordinary Hecke
algebra $h_{k,w}^\ord(S_0(p^\alpha),\epsilon,K)$ acting on the $K$-vector space of
cusp forms of weight $(k,w)$, level $S_0(p^\alpha)$ and
finite order character $\epsilon:S_0(p^\alpha)/S_1(p^\alpha)\to\C$
is isomorphic to a residue
algebra of $\cal R$. Here $S_0(p^\alpha):=S\cap U_0(p^\alpha)$ and
$S_1(p^\alpha):=S\cap U_1(p^\alpha)$. More precisely, let
$$G:=\invlim_\alpha S_0(p^\alpha)\r^\times/S(p^\alpha)\r^\times,$$
where $S_0(p^\alpha):=S\cap U_0(p^\alpha)$ and denote by $W$ the
free part of $G$. Then $\cl R$ has a natural
structure of $\wt\Lambda$-algebra, where  $\wt\Lambda:=\cl O[\![G]\!]$ is
the Iwasawa algebra of $G$ and there are isomorphisms:
\begin{equation}\label{intro-cont-th-hecke-alg}
\cl R_P/P\cl R_P\rightarrow h^{\ord}_{k,w}(S_0(p^\alpha),\epsilon,K)\end{equation}
for suitable ideals
$P$ of $\wt\Lambda$. See \cite[Theorem 2.4]{H2} or \S \ref{NOHA}
for details.

Denote ${\cal L}:={\rm
Frac}(\Lambda)$, the field of fractions of $\cal L$ and by $\bar{
\cal L}$, its algebraic closure. Define \[\cl X(\bar {\cl L}):=\Hom_{\textrm{cont}}^{\cl O{\textrm{
-algebras}}}(\cl R,\bar{\cal L})\] and say that a point $\kappa\in\cl X(\bar{\cl L})$ is
\emph{arithmetic} it its restriction to $G$ has kernel equal to $P$ for
some of the ideals $P$ appearing in \eqref{intro-cont-th-hecke-alg}.
See \cite[pages 150-152]{H2}
or \S \ref{NOHA} for a precise notion of {arithmetic} point.
As a corollary of \cite[Theorem 2.4]{H2} is that points in
 correspond to $p$-adic families
of Hilbert modular forms interpolating classical Hilbert modular
forms of level $S(p^\alpha)$. More precisely, fix a point
$\theta\in\cl X(\bar{\cl L})$. By \cite[Theorem 2.4]{H2}, the image
of $\theta$ is contained in a finite extension $\cl K$ of $\cl L$.
Let $\cl I$ denote the integral closure of $\Lambda$ in $\cl K$.
Then each arithmetic point $\kappa\in\Hom_{\rm cont}^{\cl
O\rm{-algebras}}(\cl I,\overline\Q_p)$ corresponds to a classical
$p$-nearly ordinary Hilbert modular form $f_\kappa$ of suitable
weight $(k,w)$, level $S_0(p^\alpha)$ and character
$\epsilon$. See \cite[Corollary 2.5]{H2} for details. In particular, to any arithmetic point $\kappa$
is associated a weight $(n,v)$ (or $(k,w)$), a level $S_0(p^\alpha)$
and a character $\epsilon$. Finally, introduce the notion of \emph{primitive}
arithmetic point, which will be needed in the statement of Theorem
\ref{th-intro}, as in \cite[pages 317, 318]{H1}.

\begin{rem} Note that the Iwasawa algebra $\Lambda:={\cal
O}[\![W]\!]$ of $W$ is isomorphic to the formal power series ring in
$s$ variables ${\cal O}[\![X_1,\dots,X_s]\!]$, where
$s=d+1+\delta_F$ and $\delta_F$ is the non negative integer
appearing in Leopold's conjecture. An other way to state
\cite[Corollary 2.5]{H2} is that any $f\in S_{k,w}(U_1(\fr
np^\alpha),K)$ has $s$-dimensional $p$-adic \emph{deformations} over
$\cl O$  (in the sense of \cite[pages 152-153]{H2}).\end{rem}

To discuss the main result of this paper, we need some technical
assumptions. Denote by $F_\mathbb A$ the adele ring of $F$. By the
Strong Approximation Theorem, choose $t_\lambda\in\GL_2(F_\mathbb
A)$ for $\lambda\in\{1,\dots, h(p^\alpha)\}$ and a suitable integer
$h(p^\alpha)$ depending on $\alpha$ with $(t_\lambda)_{\fr
np}=(t_\lambda)_\infty=1$ and such that there is the following
disjoint union decomposition:
\[\GL_2(F_\mathbb
A)=\coprod_{\lambda=1}^{h(p^\alpha)}\GL_2(F)t_\lambda
S\R_+^\times\] where $\R_+$ is the set of positive real numbers
and $(t_\lambda)_{\fr np}$ (respectively, $(t_\lambda)_\infty$) is
the $\fr np$-part (respectively, the archimedean part) of
$t_\lambda$. If $\alpha=0$ write $h$ for $h(1)$. Define the
following arithmetic groups depending on ${S}$:
\[\Gamma^\lambda(p^\alpha):=x_\lambda{S}(p^\alpha
)x_\lambda^{-1}\cap\GL_2^+(F) \mbox{ and }
\Gbar^\lambda(p^\alpha):=\Gamma^\lambda(p^\alpha)/(\Gamma^\lambda(p^\alpha)\cap
F^\times),\] where $\GL_2^+(F)$ is the subgroup of $\GL_2(F)$
consisting of matrices with totally positive determinant.
\begin{assumption}\label{assumption-2}
The groups $\Gbar^\lambda:=\Gbar^\lambda(1)$ are torsion free for
all $\lambda=1,\dots,h$.
\end{assumption} See \cite[Lemma 7.1]{H1} for conditions under
which condition \ref{assumption-2} is verified. In particular, there
are infinitely many square-free integers for which
$\Gbar^\lambda(U_0(N))$ is torsion-free for all $\lambda$. Under
this assumption, for any $\Gamma^\lambda$-module $E$, there is a
canonical isomorphism $H^d(\Gamma^\lambda\bk\fr H^d,\wt E)\simeq
H^d(\Gbar^\lambda,E)$, where $\fr H$ is the complex upper half plane
and $\wt E$ is the coefficient system associated to $E$.

The aim of this work is to combine Hida theory in the totally real
case and Greenberg-Stevens theory of $\Lambda$-adic modular symbols
to obtain a $p$-adic $L$-function attached to a Hida family of
nearly ordinary Hilbert cusp forms. This $p$-adic $L$-function
interpolates $p$-adic $L$-functions attached to classical forms
$f_\kappa$ in the Hida families as in \cite[Theorem 5.15]{GS}. See
also the work of Kitagawa \cite{Ki} which inspired the construction
of the $p$-adic $L$ function in \cite{GS}. The main difference with
respect to the case of \cite{GS} relies on the fact that, as noticed
earlier, the Iwasawa algebra $\Lambda$ in this case is isomorphic to
a power series ring in \emph{at least} $d+1$-variables over $\cl O$,
while in the rational case it is just isomorphic to a power series
ring in \emph{one} variable. In \cite{H1} Hida constructs for each
weight $v$ as above a nearly ordinary universal Hecke algebra $\cl
R_v=h_{v}^\ord(U_1(p^\infty),\cl O)$ such that each
$h_{k,w}^\ord(U_1(p^\alpha),\cl O)$ with $k$ parallel to $v$ can be
obtained as a residue algebra of $\cl R_v$. These Hecke algebras
$\cl R_v$ are endowed with a structure of $\cl
O[\![X_1,\dots,X_{1+\delta_F}]\!]$-algebra. The Iwasawa algebra
$\Lambda$ considered here has more variables in order to unify these
various Hecke algebras as $v$ and the character $\epsilon$ vary.

A consequence of that fact that the Iwasawa algebra considered here
is bigger that that in \cite{GS} is that the role played by the set
of primitive vectors $(\Z_p^2)^\prime$ in \cite{GS} will be played
in this context by the $p$-adic space
\[X:=NC\bk\GL_2(\r_p)\simeq\invlim_\alpha S(p^\alpha)\r^\times\bk S\r^\times,\] which has a greater
rank. Here $N$ is the standard lower unipotent subgroup of
$\GL_2(\r_p)$ and $C$ is the closure of $\e:=S\cap F$ embedded
diagonally in $\GL_2(\r_p)$, where $\r_p:=\r\otimes_\Z\Z_p$. A
similar $p$-adic space has been defined and studied in \cite{as}.
The action of the Hecke operators $U_\fr p$ for prime ideals $\fr
p\mid p$ on $X$ is similar to that considered in \cite{as}.  To
describe $X$ more precisely, for any prime ideal $\fr p\mid p$ of
$F$, let $(\r_\fr p^2)^\prime$ denote the set of primitive vectors
of $\r_\fr p^2$, i. e. the set of elements $(x,y)\in\r_\fr p^2$ such
that at least one of $x$ and $y$ does not belong to $\fr p$. Set
$(\r_p^2)^\prime:=\prod_{\fr p\mid p}(\r_\fr p^2)^\prime$. Denote by
$\ec$ the closure of $S\cap F^\times$ in $\r_p^\times$. Then $X$ can
be identified via the map \[\gamma=\smallmat abcd\mapsto
((a,b),\det(\gamma))\] with
$\ec\bk((\r_p^2)^\prime\times\r_p^\times)$. Hence $X$ may be viewed
as an analogue of the primitive vectors appearing in \cite{GS}.
Elements of $X$ will be denoted by $(x,y),z)$.

Following \cite{GS}, define $\D_X$ to be the space of $\cl O$-valued
measures on $X$. This space is endowed with $\wt\Lambda$ and
$\Lambda$-algebra structures. Let $\cl D_X$ denote the local
coefficient system on $X_S$ associated to $\D_X$. We define the
space of $\Lambda$-adic modular symbols in this context to be
$$\W:=H_\cpt^d(X_S,\cl D_X)$$ the $d$-th cohomology with compact
supports of the Hilbert modular variety $X_S$ associated to $S$ with
coefficients in $\cl D_X$. It follows from \cite[Proposition
4.2]{AS} that this definition of $\Lambda$-adic modular symbols is
consistent with the analogous definition in \cite{GS}. The group
$\W$ is endowed with a structure of $\wt\Lambda$-module. It is also
endowed with an action of the involution $(\smallmat
100{-1},\dots,\smallmat 100{-1}) \in\GL_2(F)^d$, so there is a
notion of $\sgn$-eigenspace in $\W$ for each choice of $\sgn\in\{\pm
1\}^d$.

Let ${\rm Sym}^n(K)$ be the space of homogeneous polynomials with
coefficients in $K$ in $2d$ variables $X_\sigma,Y_\sigma$,
$\sigma\in I$, of degree $n_\sigma$ in $(X_\sigma ,Y_\sigma)$. For
any primitive arithmetic point $\kappa$ of weight $(n,v)$ and
character $\epsilon$, define a
specialization map $\rho_{\kappa}:{\D}_X\to {\rm Sym}^n(K)$ by
$$\mu \mapsto \rho_{\kappa}(\mu):=\int_{X^\prime}\epsilon(x)z^v (xY -
yX)^{n} d\mu(x,y,z),$$ where $X^\prime$ the subset of $X$ consisting
of elements $((x,y),z)$ such that $x\in\r_p^\times$. The map
$\rho_{\kappa}$ induces a map on the cohomology groups which we
again denote by the same symbol\[\rho_{\kappa} :\W\otimes_\Lambda\cl
R_{P}\to H^d_{\rm cpt}(X_{S_0(p^\alpha)},{\rm Sym}^n(K)).\] Here
$P:=\kappa_{|\Lambda}$ and $\cl R_{P}$ is the localization of $\cl
R$ at the kernel of $P:\Lambda\to\overline\Q_p$ (compare with
\eqref{intro-cont-th-hecke-alg}) and $S_0(p^\alpha)$ is the level of
the modular form $f_\kappa$ associated to the arithmetic point
$\kappa$. By following \cite{GS} and \cite{as}, a corresponding
control theorem for these specialization maps is stated in Theorem
\ref{main-theorem} and proved in \S \ref{proof-control-theorem}.
This result states that, for fixed $\theta:\cl R\to\cl I$ and
$\kappa_0\in\cl X(\cl I)$ an arithmetic point and for each choice of
sign $\sgn\in\{\pm 1\}^d$, there exists $\Phi\in (\W\otimes_{\wt
\Lambda}\cl R_{P_0})^\sgn$ (with ${\kappa_0}_{|\wt\Lambda}=P_0$)
such that
\begin{equation}\label{rel-mod-sym-intro}
\rho_{\kappa_0}(\Phi)=\Psi_{f_{\kappa_0}}^\sgn,\end{equation}
where $\Psi_{f_{\kappa_0}}^\sgn$ is
the $\sgn$-classical modular symbol associated to the cusp form $f_{\kappa_0}$ (see \S \ref{CMS}
for details and definitions). The interpolation formulas
satisfied by the classical modular symbols, which are recalled in \S
\ref{IFCMS}, lead to the following result:
\begin{theorem}\label{th-intro}Let $\theta$, $\kappa_0$ and $\Phi$ as above
(hence, \ref{rel-mod-sym-intro} holds). Choose a sign
$\epsilon\in\{\pm 1\}^d$. There exists a $p$-adic analytic function
$L_p^\sgn(\Phi,\theta,\kappa,\sigma)$ in the variables $\kappa\in
\cl X(\cl I)$ and $\sigma\in \cl X(\r_p^\times)={\rm Hom}_{\rm
cont}(\r_p^\times,\overline\Q_p^\times)$ such that for any primitive
arithmetic point $\kappa\in\cl X(\cl I)$ of weight $k$ and character
$\epsilon$, save possibly a finite number, any positive integer $m$
in the critical strip for the modular form $f_\kappa$ defined in
\eqref{critical-strip} and any primitive character $\chi$ of the
ideal class group of $F$ of conductor $p^m$ and of sign $\sgn$:
\[L_p^\sgn(\Phi,\theta,\kappa,\chi\chi_\cyc^{m-1})=
\left(1-\frac{\chi(p)\chi_\cyc^{m-1}(p)}{a_p(\theta,\kappa)}\right)
\frac{\Omega(\Phi,\theta,\kappa)}{\Omega^\sgn(f,\chi){a_p(\theta,\kappa)^m}}\Lambda(f_\kappa,\chi,m),\]
where $\chi_\cyc$ is the cyclotomic character,
$\Lambda(f_\kappa,\chi,m)$ is the complex $L$-function defined in \S
\ref{IFCMS}, Equations \eqref{Lambda} and \eqref{Lambda-1},
$a_p(\theta,\kappa):=\kappa\circ\theta(T(p))$ and finally
$\Omega(\Phi,\theta,\kappa)$ and ${\Omega^\sgn(f,\chi)}$ are
suitable periods defined in \eqref{eq-30} and
\eqref{Omega}.\end{theorem}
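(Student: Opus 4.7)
The plan is to adapt the Greenberg--Stevens construction (\cite{GS}) from the elliptic modular case to the present setting of nearly ordinary Hilbert modular forms, using the $\Lambda$-adic modular symbol $\Phi$ whose existence is guaranteed by Theorem \ref{main-theorem} (i.e.\ the relation \eqref{rel-mod-sym-intro}). The argument decomposes into four conceptual steps: (i) construct $L_p^\sgn$ as an integral of $\sigma$ against a measure extracted from $\Phi$; (ii) specialize at a primitive arithmetic point $\kappa$ using the control theorem; (iii) recognize the resulting classical integral via the interpolation formulas of \S \ref{IFCMS}; and (iv) extract the Euler factor and $U_p$-correction coming from the nearly ordinary $p$-stabilization.

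For step (i), the measure-valued cohomology class $\Phi \in H^d_\cpt(X_S, \cl D_X)^\sgn \otimes_{\wt\Lambda} \cl R_{P_0}$ is paired with a distinguished $d$-cycle in $X_S$ (the Hilbert analogue of the arc joining the cusps $0$ and $\infty$, built as a product of geodesics in the Borel--Serre boundary) to produce a distribution $\mu_\Phi \in \D_X \otimes \cl R_{P_0}$. For $\kappa \in \cl X(\cl I)$, twisting by the weight factor $\epsilon(x) z^v$ appearing in $\rho_\kappa$ and pushing forward along $((x,y),z) \mapsto z$ produces a measure $\nu_{\Phi,\kappa}$ on $\r_p^\times$. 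We then set
$$L_p^\sgn(\Phi, \theta, \kappa, \sigma) := \int_{\r_p^\times} \sigma(z) \, d\nu_{\Phi, \kappa}(z).$$
$p$-adic analyticity in $\kappa \in \cl X(\cl I)$ is inherited from the $\cl I$-linear structure of $\Phi$, and analyticity in $\sigma \in \cl X(\r_p^\times)$ is automatic since $\nu_{\Phi,\kappa}$ is a bounded $\cl O$-valued measure.

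For steps (ii) and (iii), fix a primitive arithmetic $\kappa$ outside a finite exceptional set. The control theorem (Theorem \ref{main-theorem}) gives $\rho_\kappa(\Phi) = \Omega(\Phi,\theta,\kappa) \cdot \Psi^\sgn_{f_\kappa}$, and substituting this into the defining integral with $\sigma = \chi \chi_\cyc^{m-1}$ reduces the computation to pairing the classical modular symbol $\Psi^\sgn_{f_\kappa}$ with the Hecke character $\chi$ and the power of the cyclotomic character encoding the critical integer $m$. The formulas recalled in \S \ref{IFCMS}, specifically \eqref{Lambda} and \eqref{Lambda-1}, identify this pairing with $\Lambda(f_\kappa, \chi, m)/\Omega^\sgn(f, \chi)$ whenever $m$ lies in the critical strip \eqref{critical-strip}. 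Step (iv) then accounts for the remaining factor $a_p(\theta, \kappa)^{-m}\bigl(1 - \chi(p) \chi_\cyc^{m-1}(p)/a_p(\theta, \kappa)\bigr)$: the power $a_p^{-m}$ records the $m$-fold application of $U_p$ needed to align the domain of integration with the critical cycle, while the parenthetical Euler factor is the standard correction between the full $L$-function and its $p$-depleted version, reflecting that $\chi \chi_\cyc^{m-1}$ has conductor exactly $p^m$. Both are essentially formal consequences of the $U_p$-eigensymbol property of $\Phi$, as in \cite[\S 5]{GS} and \cite{Ki}.

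The principal technical obstacle, absent from \cite{GS}, is handling the multivariable nature of $\cl I$: since $\Lambda \simeq \cl O[\![X_1, \dots, X_s]\!]$ with $s = d + 1 + \delta_F$, the analyticity of $\kappa \mapsto L_p^\sgn(\Phi, \theta, \kappa, \sigma)$ must be verified on the full weight space $\cl X(\cl I)$, not just on a one-parameter deformation. Uniqueness of the analytic function then follows from Zariski density in $\cl X(\cl I)$ of the primitive arithmetic points of each fixed weight vector $k$ (compare \cite[\S 2]{H1}). A secondary difficulty is to show that the period $\Omega(\Phi, \theta, \kappa)$ depends analytically on $\kappa$; this forces the omission of the finite exceptional set of arithmetic points at which the specialization map $\rho_\kappa$ degenerates --- typically those where the local Hecke algebra fails to be \'etale over the weight algebra at $\kappa$.
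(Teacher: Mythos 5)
Your overall strategy matches the paper's: construct a measure from $\Phi$ by pairing with a cycle, specialize via the control theorem, invoke the interpolation formulas of \S \ref{IFCMS}, and extract the $U_p$-Euler factor. The paper implements exactly this, as a chain through Propositions \ref{prop4.6}--\ref{prop4.9} and Theorems \ref{THM-1}, \ref{th}, \ref{th-4.13}. However, two substantive gaps remain.

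First, your construction uses the wrong coordinate for the cyclotomic variable. You push $\nu_{\Phi,\kappa}$ forward along $((x,y),z)\mapsto z$ and integrate $\sigma(z)$, but in the paper the character $\sigma$ is applied to $y/x$, not to the determinant $z$. The definitions are
\[L_p(\Phi_\lambda,\kappa,\sigma)=\int_{X''}\kappa(x,z/x^2)\,\sigma(y/x)\,dL(\Phi_\lambda)(x,y,z),\qquad
L_p^*(\Phi_\lambda,\kappa,\sigma)=\int_{X'}\kappa(x,z/x^2)\,\sigma(y/x)\,dL(\Phi_\lambda)(x,y,z),\]
so the determinant $z$ only enters through the weight factor $\kappa(x,z/x^2)$. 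If you integrate $\sigma(z)$ instead, you are not interpolating Dirichlet twists by characters of $p$-power conductor: the computation in Lemma \ref{l}, which identifies $L_p^*(\Phi_\lambda*T(\fr p^m),\kappa,\sigma_{\chi,r})$ with the classical twisted value $L(\Phi_{\mu,\theta_\kappa},\chi,r)$, fails with your coordinate, and the right-hand side of Theorem \ref{th-intro} never appears.

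Second, step (iv) of your plan asserts that the factor $a_p^{-m}\bigl(1-\chi(p)\chi_\cyc^{m-1}(p)/a_p\bigr)$ is ``a formal consequence of the $U_p$-eigensymbol property,'' but this is precisely the content requiring proof. The paper's mechanism is the introduction of \emph{two} $L$-functions --- the standard $L_p$, analytic in $\sigma\in\cl X(\r_p^\times)$, and the improved $L_p^*$, defined only on $\cl A(\r_p^\times)$ --- together with the relation of Proposition \ref{prop4.6},
\[L_p(\Phi_\lambda*T(\frakp),\kappa,\sigma)=L_p^*(\Phi_\lambda*T(\frakp),\kappa,\sigma)-\sigma(\frakp)L_p^*(\Phi_\mu,\kappa,\sigma),\]
applied with the eigenvalue relation $\Phi_\lambda*T(\frakp)=a_\frakp(\theta,\kappa)\Phi_\mu$. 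Without setting up this two-function device (or some equivalent), your step (iv) is a narrative gloss rather than an argument. Two smaller points: the finite exceptional set of $\kappa$ is where $\Phi\in\W_{\cl K}$ fails to be \emph{regular} (i.e.\ where its projection to $\W_{\cl K_P}$ does not lie in $\W_{\cl R_P}$), not a failure of \'etaleness of the Hecke algebra; and the theorem does not require $\Omega(\Phi,\theta,\kappa)$ to depend analytically on $\kappa$ --- it is a constant fixed at each arithmetic $\kappa$ by \eqref{eq-30}, and only $L_p^\sgn$ itself is asserted to be analytic.
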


Let $f$ be an ordinary Hilbert modular form in the sense of
Panchishkin \cite[\S 8]{Pa}. It is well known that $f$ is nearly
ordinary in the sense of Hida. For any sign $\sgn\in\{\pm 1\}^d$,
denote by $L_p^\sgn(f,s)$ the $p$-adic $L$-function attached to $f$
and $\sgn$ constructed by Manin in \cite{Ma}. This function can be
characterized by its interpolation property: for any integer $m$ in
the critical strip of $f$,
\begin{equation}\label{int-form-intro}
L_p^\sgn(f,m)=\left(1-\frac{\chi(p)\chi_\cyc^{m-1}(p)}{a_p(\theta,\kappa)}\right)
\frac{\Lambda(f,\chi,m)}{\Omega^\sgn(f,\chi){a_p(\theta,\kappa)^m}}.\end{equation}
This formula can be found in a less explicit form in \cite[\S 5]{Ma}
or in \cite[Theorem 8.2]{Pa} in a form closest to this. The
interpolation formulas in Theorem \ref{th-intro} and
\eqref{int-form-intro}, the analiticity of the $p$-adic
$L$-functions and the fact that $L_p^\sgn(f,\chi)$ is uniquely
determined by \eqref{int-form-intro} by \cite[Theorem 8.2 (iii)]{Pa}
lead to the following:

\begin{coro}
Let $\Phi$, $\theta$ and $\kappa$ be as in Theorem \ref{th-intro}.
Suppose that $f_\kappa$ is ordinary in the sense of \cite{Pa}. Then
there is an equality of Iwasawa functions in $\sigma$:
\[L_p^\sgn(\Phi,\theta,\kappa,\sigma)=\Omega(\Phi,\theta,\kappa)L_p^\sgn(f_\kappa,\sigma).\]\end{coro}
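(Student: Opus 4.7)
The plan is to compare the two $p$-adic analytic functions of $\sigma$ appearing in the asserted identity and to show that they agree on a Zariski-dense subset of $\cl X(\r_p^\times)$; analyticity will then force equality of the two Iwasawa functions.

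First I would specialize the interpolation formula of Theorem \ref{th-intro} at the characters $\sigma=\chi\chi_\cyc^{m-1}$, where $m$ is a positive integer in the critical strip for $f_\kappa$ and $\chi$ is a primitive character of the ideal class group of $F$ of conductor $p^m$ and sign $\sgn$. This produces an explicit formula for $L_p^\sgn(\Phi,\theta,\kappa,\chi\chi_\cyc^{m-1})$ involving the Euler-type factor $1-\chi(p)\chi_\cyc^{m-1}(p)/a_p(\theta,\kappa)$, the periods $\Omega(\Phi,\theta,\kappa)$ and $\Omega^\sgn(f,\chi)$, the power $a_p(\theta,\kappa)^m$, and the complex $L$-value $\Lambda(f_\kappa,\chi,m)$. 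Since by hypothesis $f_\kappa$ is ordinary in the sense of Panchishkin, the interpolation formula \eqref{int-form-intro}, i.e.\ \cite[Theorem 8.2]{Pa}, produces the analogous special values of $L_p^\sgn(f_\kappa,\sigma)$ at exactly the same characters. A term-by-term comparison then shows that these two expressions differ precisely by the constant factor $\Omega(\Phi,\theta,\kappa)$, so the claimed identity holds pointwise at every such character $\sigma$.

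To pass from pointwise equality on this set of special characters to the full identity of Iwasawa functions, I would use that the characters of the form $\chi\chi_\cyc^{m-1}$ as above are Zariski-dense in the rigid analytic space $\cl X(\r_p^\times)$, so two $p$-adic analytic functions on this space that coincide on them must coincide identically. As is remarked in the text preceding the statement of the corollary, an equivalent and cleaner route is to invoke the uniqueness clause in \cite[Theorem 8.2 (iii)]{Pa}: $L_p^\sgn(f_\kappa,\sigma)$ is uniquely characterized by its interpolation property, and the function $\Omega(\Phi,\theta,\kappa)^{-1}L_p^\sgn(\Phi,\theta,\kappa,\sigma)$ satisfies the same interpolation property by the previous step, hence the two must agree.

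The real content of the corollary is therefore already delivered by Theorem \ref{th-intro} and Panchishkin's theorem; the main subtlety, rather than a genuine obstacle, is verifying that the fixed $\kappa$ does not lie in the finite exceptional locus of $\cl X(\cl I)$ excluded from the interpolation statement in Theorem \ref{th-intro}, so that the special-value formula is actually available at this $\kappa$. Under the standing convention that $\Phi$, $\theta$ and $\kappa$ are taken \emph{as in} Theorem \ref{th-intro}, this is implicit in the hypothesis, and in any case analyticity in $\kappa$ of $L_p^\sgn(\Phi,\theta,\kappa,\sigma)$ together with the equality on a Zariski-dense set of non-exceptional $\kappa$ would extend the identity to the exceptional fiber.
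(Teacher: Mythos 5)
Your proposal matches the paper's own argument: the published proof likewise compares the interpolation formula of Theorem \ref{th-intro} (equivalently \eqref{int-th}) with Panchishkin's characterizing interpolation property \eqref{int-formula} and concludes by the uniqueness clause of \cite[Theorem 8.2 (iii)]{Pa}. Your additional remarks on Zariski density and the finite exceptional locus are reasonable elaborations but do not change the substance.
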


In the spirit of \cite{GS}, the two variable $L$-function is a tool
to prove the exceptional zero conjecture for the derivative of the
one variable $p$-adic $L$-function associated to an elliptic curve
$E/\Q$ stated by Mazur, Tate and Teitelbaum in \cite{MTT}. The
natural development of this work is to investigate the analogue
conjecture for elliptic curves $E/F$ over totally real fields. An
interesting and new feature of the totally real context is that in
this case it may be possible to calculate partial derivatives
$\partial/\partial_\sigma$ with respect to each $\sigma\in I$. It is
expected that the parallel derivative
$\prod_\sigma\partial/\partial_\sigma$ will play the role of the
weight derivative in the context of \cite{GS}. It may also by
interesting to investigate the meaning of the non parallel
derivative operators and their connections with the geometry of the
elliptic curve $E/F$. We hope to come to this questions in a future
work.

\bigskip

\noindent{\bf Acknowledgments.} This paper is part of the Ph.D
thesis of the first author. The first author would like to thank
sincerely his advisor Prof. F. Diamond for his help and
encouragement. Both authors would like to thank Prof. H. Darmon for
suggesting the problem and for his advice and support during their
visits to McGill University.

\section{Hida's theory for Hilbert modular forms}\label{section-2}
\subsection{Hilbert modular forms}\label{HMF}
We recall results from \cite{Sh},\cite{H1} and \cite{H2}. Let $F$ be
a totally real field and denote by $\r$ its ring of integers.
Denote by $F_\mathbb A$ the adele ring of $F$ and by $\widehat F$ the ring
of finite adeles. For any place $v$ of $F$ and any $x\in F_\mathbb A$ or
$x\in\GL_2(F_\mathbb A)$, denote by $x_v$ the $v$-component of $x$. Fix
an open compact subgroup $U$ of $\GL_2(\widehat F)$. By the Strong
Approximation Theorem, choose $t_\lambda\in\GL_2(F_\mathbb A)$ for
$\lambda\in\{1,\dots, h(U)\}$ and a suitable integer
$h(U)$ depending on $U$ with $(t_\lambda)_{\fr q}=(t_\lambda)_v=1$  for any
prime ideal $\fr q$ dividing the adelized determinant $\widehat\det(U)$ of $U$
and any archimedean place $v$,
and such that there is the following
disjoint union decomposition:
\begin{equation}\label{disj}\GL_2(F_\mathbb
A)=\coprod_{\lambda=1}^{h(U)}\GL_2(F)t_\lambda
US_\infty\end{equation} where $S_\infty:=({\rm SO}_2(\R)\R_+)^d$ and
$\R_+$ is the group of positive real numbers.
Define the following arithmetic groups depending on ${U}$:
\begin{equation}\label{arith-groups}
\Gamma^\lambda(U ):=x_\lambda(U
)x_\lambda^{-1}\cap\GL_2^+(F) \mbox{ and }
\Gbar^\lambda(U ):=\Gamma^\lambda(U )/(\Gamma^\lambda(U )\cap
F^\times),\end{equation} where $\GL_2^+(F)$ is the subgroup of
$\GL_2(F)$ consisting of matrices with totally positive determinant.
For any $\lambda=1,\dots,h(U)$, denote by
$S_{k,w}(\Gamma^\lambda(U),\C)$ the $\C$-vector space of
Hilbert cusp forms with respect to the automorphic factor
$j(\gamma,z):=\frac{\det(\gamma)^w}{(cz+d)^{k}}$ for
$\gamma=\left(\begin{array}{cc}a&b\\c&d\end{array}\right)\in\Gamma^\lambda(U )$,
where the usual multi-index notations are used: for
$z=(z_\sigma)_{\sigma\in I}$ and $z'=(z'_\sigma)_{\sigma\in I}\in\fr
H[I]$ ($\fr H$ is the complex upper half plane) and
$m=(m_\sigma)_{\sigma\in I}\in\Z[I]$, $z^m:=\prod_{\sigma\in
I}z_\sigma^{m_\sigma}$ and $z+z'=(z_\sigma+z'_\sigma)_{\sigma\in
I}$. Explicitly, for any $\gamma\in\Gamma^\lambda(U )$:
$$(f|\gamma)(z):=\det(\gamma)^w(cz+d)^{-k}f(\gamma(z))=f(z),$$ where the
action of $\gamma$ on $\fr H[I]$ is given by composing the usual
action $z\mapsto\alpha(z)$ on $\fr H$ by fractional linear
transformations of the group $\GL_2^+(\R)$ of real matrices with
positive determinant with the injections
$\sigma:\GL_2^+(F)\hookrightarrow\GL_2^+(\R)$ deduced from
$\sigma\in I$; more precisely,
$\gamma((z_\sigma)_\sigma)=(\sigma(\gamma)(z_\sigma))_\sigma$.
Define
\[S_{k,w}({S}(U ),\C):=\prod_{\lambda=1}^{h(U )}S_{k,w}(\Gamma^\lambda(U ),\C).\]

For any ideal $\m \subseteq \r$ define the following open compact
semigroups of $\GL_2(\widehat F)$:
$$\Delta_0(\m):=\left\{\left(\begin{array}{cc}a&b\\c&d\end{array}\right)\in\GL_2(\widehat F)\cap\M_2(\r):
c\equiv 0\pmod{\fr m} \right\},$$
$$\Delta_1(\fr
m):=\left\{\left(\begin{array}{cc}a&b\\c&d\end{array}\right)\in\Delta_0(\fr
m):a_\q-1\in{\m }_\q \mbox{ for all prime ideals
}\q\mid\m\right\},$$ \begin{equation}\label{U0} U_0(\fr
m):=\left\{\gamma\in\Delta_0(\m
):\det(\gamma_\q)\in\widehat\r^\times_\q, \mbox{ for all prime
ideals }\fr q\subseteq \r\right\},\end{equation}
\begin{equation}\label{U1}
U_1(\fr m):=U_0(\m)\cap\Delta_1(\m ),\end{equation}
\begin{equation}\label{U}U(\fr
m):=\left\{\left(\begin{array}{cc}a&b\\c&d\end{array}\right)\in
U_1(\m ):d_\q-1\in{\m }_\q \mbox{ for all prime ideals
}\q\mid\m\right\}.\end{equation} where
$\widehat{\r}:=\r$ is the
profinite completion of $\r$.
Fix an ideal $\n \subseteq\r$, a rational prime $p$ prime to $2\n$
and not ramified in $F$ and an open compact subgroup
${S}\subseteq\GL_2(\widehat F)$ such that $U_0(\n )\supseteq
{S}\supseteq U_1(\n )$ and the $p$-component ${S}_p$ of ${S}$ is
$\GL_2(\r_p)$. For any positive integer $\alpha$, set
\[{S}_0(p^\alpha):={S}\cap U_0(p^\alpha),\quad {S}_1(p^\alpha):={S}\cap
U_1(p^\alpha),\quad {S}(p^\alpha):={S}\cap U(p^\alpha).\] Denote by
$\Gamma^\lambda(p^\alpha)$ and $\Gbar^\lambda(p^\alpha)$ (respectively,
$\Gamma^\lambda_0(p^\alpha)$ and $\Gbar^\lambda_0(p^\alpha)$;
$\Gamma^\lambda_1(p^\alpha)$ and $\Gbar^\lambda_1(p^\alpha)$) the arithmetic
groups associated as in \eqref{arith-groups}
to $S(p^\alpha)$ (respectively, $S_0(p^\alpha)$; $S_1(p^\alpha)$).


Any modular form $f_\lambda\in S_{k,w}(\Gamma^\lambda(p^\alpha),\C)$
has a Fourier expansion of the form
\[
f_\lambda(z)=\sum_{\xi\in\fr t_\lambda\fr d,\xi\gg
0}a_\lambda(\xi)e^{2\pi i(\xi\cdot z)}, \] where the notations are
as follows: $\fr t_\lambda$ is an ideal represented by
$t_\lambda$, $\fr d$ is the different ideal of $F/\Q$, $\xi\gg 0$
if and only if, by definition, $\xi$ is totally positive, and
$(\xi\cdot z):=\sum_{\sigma\in I}\sigma(\xi)z_\sigma$ is the
scalar product. For details, see \cite[Corollary 4.3]{H1}.

\subsection{Hecke operators}\label{section-Hecke-operators}

The right action  on $S_{k,w}({S}(p^\alpha),\C)$ of the Hecke
algebra $R({S}(p^\alpha),\Delta_0(\n p^\alpha))$, which is by
definition the free $\Z$-module generated by double cosets
\[T(x):={S}(p^\alpha)x{S}(p^\alpha)\] for $x\in\Delta_0(\n p^\alpha)$
with multiplication defined by \[\sum_ia_iT(x)\cdot\sum_jb_j
T(y):=\sum_{i,j}a_ib_jT(x_iy_j),\] can be described as follows. Fix
$x\in \Delta_0(\n  p^\alpha)$ and $\lambda\in\{1,\dots,h\}$. Let
$\mu\in\{1,\dots,h\}$ such that $\det(x)t_\lambda t_\mu^{-1}$ is
trivial in the strict class group of $F$. Let $\alpha_\lambda$ such
that ${S}(p^\alpha)x
{S}(p^\alpha)={S}(p^\alpha)x_\lambda^{-1}\alpha_\lambda x_\mu
{S}(p^\alpha)$ and form the finite disjoint coset decomposition
$\Gamma_\lambda\alpha_\lambda\Gamma_\mu=
\sum_{j}\Gamma_\lambda\alpha_{\lambda,j}$ where
$\alpha_{\lambda,j}\in\GL_2(F)\cap x_\lambda\Delta_0(\fr
np^\alpha)x_\mu^{-1}$. Define \[g_\mu:=\sum_jf_\lambda
|\alpha_{\lambda,j}\quad\mbox{ and } \quad
f|T(x):=(g_1,\dots,g_h).\]

Denote by $\widetilde F$ the composite field of all the images
$\sigma(F)$ of $F$ under the elements $\sigma\in I$ and by
$\widetilde \r$ its ring of integers. Fix an $\widetilde\r$-algebra
$A\subseteq\C$ such that for every $x\in\widehat F^\times$ and every
$\sigma\in I$, the $A$-ideal $x^\sigma A$ is generated by a single
element of $A$. For any prime ideal $\q \subseteq\r$, choose a
generator $\{\q ^{\sigma}\}\in A$ of the principal ideal
$\q^{\sigma} A$. Define $\{\q \}^v:=\prod_{\sigma\in
I}\{\q^\sigma\}^{v_\sigma}$. Write a fractional ideal $\m $ of $\r$
as a product of prime ideals $\fr m =\prod_{\q }\q ^{m(\q )}$ and
define $\{\m \}^v:=\prod_{\q }(\{\q \}^{v})^{m(\q )}$. For any
element $x\in\widehat F^\times$, denote by $\m_x$ the fractional
$\r$-ideal corresponding to $x$ and define $\{x\}^v:=\{\m_x\}^v$.
Modify the Hecke operators $T(x)\in R({S}(p^\alpha),\Delta_0(\n
p^\alpha))$  by setting:
\[T_0(x):=(\{x\}^v)^{-1}T(x).\] Denote by $h_{k,w}({S}(p^\alpha),A)$
the $A$-subalgebra of $\End(S_{k,w}({S}(p^\alpha),\C))$ generated
over $A$ by operators $T_0(x)$ for $x\in\Delta_0(\n p^\alpha)$. By
\cite[Proposition 1.1]{H2}, $h_{k,w}({S}(p^\alpha),A)$ is
commutative.

For $x\in F$ and $m=(m_\sigma)_\sigma\in\Z[I]$, set
$x^m:=\prod_{\sigma\in I}\sigma(x)^{m_\sigma}$. For any integral
ideal $\m$, choose $\lambda=\lambda(\fr m)$ such that $\fr m$ is
equivalent to $\fr t_\lambda\fr d$ in the strict ideal class group
of $F$ and let $\xi_\m \in\fr t_\lambda\fr d$ such that $\xi_\fr
m\gg 0$ and $\fr m=\xi_\fr m(\fr t_\lambda\fr d)^{-1}$. Define the
modified Fourier coefficients as in \cite[Corollary 3.4]{H1}:
\begin{equation}\label{fourier}
C(\fr m,f):=\frac{a_\lambda(\xi_\m )\xi_\m ^{v}}{b_{v,\lambda}},
\quad \mbox{ where } b_{v,\lambda}:=\frac{\norm{\fr
t_\lambda}}{\{(\fr t_\lambda\fr d)^{v}\}}.\end{equation} Suppose
that $f\in S_{k,w}({S}(p^\alpha),\C)$ is an eigenform for
$h({S}(p^\alpha),A)$ such that $C(\r,f_\lambda)=1$ for all
$\lambda=1,\dots, h$ (call such a form \emph{normalized}). Then by
\cite[Corollary 4.2]{H1}:
\[f|T(\m)=C(\fr m,f)f. \]

The group $G_\alpha:=
{S}_0(p^\alpha)\r^\times/{S}(p^\alpha)\r^\times$ acts on
$S_{k,w}({S}(p^\alpha),\C)$ via the operator
$\omega(a_p^{n+2v})^{-1}T(x)$ for
$x=\left(\begin{array}{cc}a&b\\c&d\end{array}\right)\in
{S}_0(p^\alpha)$, where $\omega$ is the Teichmuller character.

For any prime ideal $\q\subseteq\r$, choose an element
$q\in\widehat\r^\times$ such that $q\widehat\r=\widehat\q$ and the
$\fr l$-component $q_\fr l$ of $q$ is equal to 1 for all prime
ideals $\fr l\subseteq\r$, $\fr l\neq\q$. Define $T(\fr
q):=T\left(\begin{array}{cc}1&0\\0&q\end{array}\right)$ for all
prime ideals $\fr q\subseteq\r$ and $T(\fr q,\fr
q):=T\left(\begin{array}{cc}q&0\\0&q\end{array}\right)$ for prime
ideals $\fr q\subseteq \r$ such that $\fr q\nmid\n $. By
\cite[Proposition 1.1]{H2}, if ${S}\supseteq U_1(\fr n)$, then
$h_{k,w}({S}(p^\alpha),A)$ is generated over $A$ by the operators
induced from the action of $G_\alpha$ and $T_0(\q)$ for all prime
ideals $\q$.

If $f$ is an eigenform for the Hecke algebra
$h_{k,w}({S}(p^\alpha),A)$, then its eigenvalues are algebraic
numbers. If $k\sim t$, then they are algebraic integers.
Define $S_{k,w}({S}(p^\alpha),A)\subseteq S_{k,w}({S}(p^\alpha),\C)$
to be the $A$-module consisting of forms whose Fourier expansion has
coefficients in $A$. The $A$-module $S_{k,w}({S}(p^\alpha),A)$ is
stable under the action of $h_{k,w}({S}(p^\alpha),A)$.

\subsection{Nearly ordinary Hecke
algebras}\label{NOHA}

Choose an embedding $\iota:\overline\Q\hookrightarrow\overline\Q_p$,
so that any algebraic number is equipped with a $p$-adic valuation.
Fix a ring of integers $\cl O$ of a finite extension of the
completion of $\iota(\widetilde F)$. After choosing an embedding
$i:\overline\Q_p\hookrightarrow\C$, the $\widetilde\r$-algebra $\cl
O$ satisfies the conditions of \S \ref{section-Hecke-operators}.

By \cite[Lemma 2.2]{H2}, $h_{k,w}({S}(p^\alpha),\cl O)$ is free of
finite rank over $\cl O$ and hence can be decomposed as a direct
sum:
\begin{equation}\label{hecke-algebras}h_{k,w}({S}(p^\alpha),\cl
O)=h_{k,w}^\ord({S}(p^\alpha),\cl O)\oplus
h_{k,w}^{\rom{ss}}({S}(p^\alpha),\cl O)\end{equation} such that the
image of $T_0(p)$ in the first factor, the nearly ordinary part, is
is a unit while its image in the other factor is topologically
nilpotent. For any pair of non negative integers $\beta\geq\alpha$
the map $T_0(x)\mapsto T_0(x)$ for $x\in\Delta_0(\n p^\alpha)$
induces a surjective ring homomorphism
$\rho_{\alpha}^\beta:h_{k,w}({S}(p^\beta),\cl O)\to
h_{k,w}({S}(p^\alpha),\cl O)$. Define: $$h_{k,w}({S}(p^\infty),\cl
O):=\invlim h_{k,w}({S}(p^\alpha),\cl O)$$ and  $$h_{k,w}^\ord
({S}(p^\infty),\cl O):=\invlim h_{k,w}^\ord({S}(p^\alpha),\cl O)$$
where the inverse limits are with respect to the maps
$\rho_\alpha^\beta$. By \cite[Theorem 2.3]{H2}, for any weight
$(k,w)$ there is an isomorphism $h_{k,w}({S}(p^\infty),\cl O)\simeq
h_{2t,t}({S}(p^\infty),\cl O)$ which takes $T(\q)$ to $T(\q)$ and
$T(\q,\q)$ to $T(\q,\q)$ for all prime ideals $\q\nmid p$. This
isomorphism induces an isomorphism between the nearly ordinary parts
\begin{equation}\label{def-R}h_{k,w}^\ord({S}(p^\infty),\cl O)\simeq \cl
R:=h_{2t,t}^\ord({S}(p^\infty),\cl O).\end{equation}

Set $S_F:=S\cap\widehat F^\times$, $S_F(p^\alpha):=S(p^\alpha)\cap
\widehat F^\times$, $\overline Z_\alpha:=
S_F\r^\times/S_F(p^\alpha)\r^\times$ and $\overline
Z_\infty:=\invlim \Zbar_\alpha$. By \cite[Lemma 2.1]{H2} the map
$\mat abcd \mapsto (a_p^{-1}d_p,a)$ induces an isomorphism
$G_\alpha\simeq(\r/p^\alpha)^\times\times\Zbar_\alpha$ and hence an
isomorphism $$G:=\invlim
G_\alpha\simeq\r_p^\times\times\Zbar_\infty.$$ Since
$h_{k,w}({S}(p^\alpha),\cl O)$ is a $\cl O[G_\alpha]$-algebra and
this algebra structure is compatible with the maps
$\rho_\alpha^\beta$, it follows that $h^\ord_{k,w}(S(p^\infty),\cl
O)$ is a $\widetilde{\Lambda}:=\cl O[\![G]\!]$-algebra. Write
$G\simeq W\times G^\rom{tors}$ where $G^\rom{tors}$ is the torsion
subgroup of $G$ and $W$ is torsion-free. Note that $W$ is well
determined only up to isomorphism; fix from now on a choice of $W$.
the ring $\Lambda:=\cl O[\![W]\!]$ is isomorphic to the power series
ring $\cl O[\![X_1,\dots,X_s]\!]$ in $d<s<2d$ variables, where
$d:=[F:\Q]$. If Leopold's conjecture holds, then $s=d+1$. By
\cite[Theorem 2.4]{H2}, the nearly ordinary Hecke algebra $\cl
R=h_{2t,t}^\ord({S}(p^\infty),\cl O)$ is a torsion-free
$\Lambda$-module of finite type.

For any finite order character
$\epsilon:{S}_0(p^\alpha)/{S}_1(p^\alpha)\to\overline\Q^\times,$
define characters
$\epsilon_\lambda:\Gamma_0^\lambda(p^\alpha)/\Gamma_1^\lambda(p^\alpha)\to\C^\times$
by setting $\epsilon_\lambda(\gamma):=\epsilon(x_\lambda^{-1}\gamma
x_\lambda)$. Denote by $S_{k,w}({S}_0(p^\alpha),\epsilon,\C)$ the
$\C$-subspace of $S_{k,w}({S}(p^\alpha),\C)$ consisting of forms
$f=(f_1,\dots,f_h)$ such that for any $\lambda=1,\dots,h$ and any
$\gamma\in\Gamma_0^\lambda(p^\alpha)$,
$(f_{\lambda}|\gamma)(z)=\epsilon^{-1}_\lambda(\gamma)f(z).$ Suppose
that $\cl O$ contains the values of $\epsilon$ and set
$S_{k,w}(S_0(p^\alpha),\epsilon,\cl O):=S_{k,w}(S_0(p^\alpha),\epsilon,\C)\cap
S_{k,w}(S(p^\alpha),\cl O)$.
Denote by
\[h_{k,w}(S_0(p^\alpha),\epsilon,\cl O)\] the $\cl O$-subalgebra of
$\End(S_{k,w}(S_0(p^\alpha),\epsilon,\C))$ generated over $\cl O$ by
operators $T_0(x)$ for $x\in\Delta_0(\n p^\alpha)$. Define
$h_{k,w}^\ord(S_0(p^\alpha),\epsilon,\cl O)$ to be the maximal
factor of $h_{k,w}(S_0(p^\alpha),\epsilon,\cl O)$ such that the
image of $T_0(p)$ is a unit in that factor. (In the following the
$\epsilon_\lambda$'s will often be simply denoted by $\epsilon$).

\begin{defi}\label{def-ordinary}
An eigenform $f\in S_{k,w}({S}_0(p^\alpha),\epsilon,\C)$ for the Hecke algebra
$h_{k,w}({S}(p^\alpha),A)$ is said to be \emph{$p$-nearly ordinary}
if the eigenvalue of $T_0(p)$ is a $p$-adic unit.
\end{defi}

Denote by $\r^\times_+$ the group of totally positive units of $\r$.
Define \begin{equation}\label{Z}Z_\alpha:=S_F\r^\times_+/S_F(p^\alpha)\r^\times_+\end{equation} and
$Z_\infty:=\invlim Z_\alpha$. The kernel of the natural surjection
map $Z_\infty\to\Zbar_\infty$ is finite and annihilated by a power
of 2. Denote by $\chi_\cyc:Z_\infty\to\Z_p^\times$ the cyclotomic
character defined by $\chi_\cyc(x)=x^t=\prod_{\sigma\in
I}\sigma(x)=\norm{x}$. Let $\epsilon: Z_\infty\to\cl O^\times$ be a
character factoring through $Z_\alpha$. Suppose that
$\epsilon\chi_\cyc^{n+2v}$ factors through $\Zbar_\infty$, where if
$n+2v=mt$ with $m\in\Z$, then $\chi_\cyc^{n+2v}$ is by definition
$\chi_\cyc^m$. Let
$$P_{n,v,\epsilon}:G\simeq\r_p^\times\times\Zbar_\infty\to\cl
O^\times$$ be the character defined by
$P_{n,v,\epsilon}(a,z):=\epsilon\chi_\cyc^{n+2v}(z)a^v$. Denote by
the same symbol the homomorphism $\widetilde{\Lambda}\to\cl O$
deduced from $P_{n,v,\epsilon}$ by extension of scalars. The kernel
of this homomorphism is a prime ideal of $\widetilde{\Lambda}$,
denoted be the same symbol $P_{n,v,\epsilon}$. To simplify
notations, set $P:=P_{n,v,\epsilon}$. Let $\widetilde\Lambda_{P}$
(respectively, $\cl R_{P}$) denote the localization of
$\widetilde\Lambda$ (respectively, $\cl R$) in $P$. Then $\cl R_{P}$
is free of finite rank over $\widetilde\Lambda_{P}$ and the natural
surjective morphism $\cl R\to h_{k,w}(S_0(p^\alpha),\epsilon,\cl O)$
induces by \cite[Theorem 2.4]{H2} an isomorphism:
\begin{equation}\label{control-theorem-hecke-algebras}\cl R_{P}/P\cl R_P\simeq
h_{k,w}(S_0(p^\alpha),\epsilon,K),\end{equation} where
$K:=\rom{Frac}(\cl O)$ is the fraction field of $\cl O$. In
particular, the dimension of the $K$-vector space
$h_{k,w}(S_0(p^\alpha),\epsilon,K)$ does not depend on $\epsilon$
and $(k,w)$ and is equal to the $\Lambda_P$-rank of $\cl R_P$.

Let $\cl L:=\rom{Frac}(\Lambda)$ be the fraction field of $\Lambda$
and fix an algebraic closure $\overline{\cl L}$ of $\cl L$. Let
$\theta:\cl R\to\overline{\cl L}$ be a $\Lambda$-algebra
homomorphism. The image $\Image(\theta)$ of $\theta$ is finite over
$\Lambda$. Denote by $\cl I$ the integral closure of
$\Image(\theta)$ in the fraction field $\cl
K:=\rom{Frac}(\Image(\theta))$. Define
$$\cl X(\cl I):=\Hom_{\cl O\rom{-alg}}(\cl I,\overline\Q_p)$$ and
denote by $\cl A(\cl I)$ the subset of $\kappa\in\cl X(\cl I)$
consisting of points whose restriction to $\Lambda$ coincide with
the restriction to $\Lambda$ of some character
$P_{n(\kappa),v(\kappa),\epsilon(\kappa)}$ as above. Points in $\cl
A(\cl I)$ are called \emph{arithmetic}. In this case, denote
$P_{n(\kappa),v(\kappa),\epsilon(\kappa)}$ by $P_\kappa$ and set
$k(\kappa):=n(\kappa)-2t$ and $w(\kappa):=k(\kappa)+v(\kappa)-t$.
Let $C(\kappa)$ denote the conductor of $\epsilon$ restricted to the
torsion free part $W$ of $\Zbar_\infty$ and $\epsilon_W$ the
restriction of $\epsilon$ to $W$. Let $\Zbar_\infty^\rom{tors}$
denote the maximal torsion subgroup of $\Zbar_\infty$ and let
$\psi:\Zbar_\infty^{\rom{tors}}\to\overline{\cl L}$ be the composite
of $\theta$ with the natural map $\Zbar_\infty^{\rom{tors}}\to\cl
R^\times$ induced by the action of $G$ on $\cl R$. Define
$\r_p^{\times \rom{tors}}$ denote the maximal torsion subgroup of
$\r_p^\times$. For any $\kappa\in\cl X(\cl I)$, define
$$\theta_\kappa:=\kappa\circ\theta:\cl R\to\overline\Q_p.$$ By \cite[Corollary
2.5]{H2}, if $\kappa\in\cl A(\cl I)$ and $\theta$ restricted to
$\r_p^{\times\rom{tors}}$ is the character $x\mapsto x^{v(\kappa)}$,
then $\theta_\kappa(T(\q))$ are algebraic numbers for all prime
ideals $\q$ and there exists a unique up to constant factors
$p$-nearly ordinary eigenform $$f_\kappa\in
S_{k(\kappa),w(\kappa)}(U_0(\fr
nC(\kappa)),\epsilon_W\psi\omega^{-(n(\kappa)+2v(\kappa))},\C)$$
such that $f_\kappa|T(\q)=\theta_\kappa(T(\q))f_\kappa$, where
$\omega$ is the Teichmuller character and if
$n(\kappa)+2v(\kappa)=mt$ with $m\in\Z$, then
$\omega^{-(n(\kappa)+2v(\kappa))}:=\omega^{-m}$. Conversely, if
$\alpha>0$ and $f\in S_{k,w}(U_1(\n p^\alpha),\C)$ is a $p$-nearly
ordinary eigenform, then there exists $\kappa\in\cl A(\cl I)$ and
$\theta$ as above such that $f$ is a constant multiple of
$f_\kappa$.

\section{$\Lambda$-adic  modular symbols}

\subsection{Classical modular symbols}\label{CMS}
Define the Hilbert variety associated to an open compact
 subgroup $U$ of $\GL_2(\widehat F)$ to be the complex variety:
\[X_{U}:=\GL_2(F)\backslash\GL_2(F_\mathbb
A)/U\cdot S_\infty.\] By strong
approximation, \[X_{U}\simeq
\coprod_{\lambda=1}^{h(U)}\Gamma^\lambda(U)\bk\fr H^d.\]
Suppose that $E$ is a (right or left)
$\Gamma^\lambda(U )$-module for all $\lambda$. Denote by $\cl
E$ the coefficient system on $X_{U }$ associated to $E$.
Then
$$H^d (X_{U },\cl E)\simeq
\oplus_{\lambda=1}^{h(U)} H^d (\Gamma^\lambda(U )\bk\fr
H^d,\cl E)\simeq \oplus_{\lambda=1}^{h(U)} H^d
(\Gbar^\lambda(U ),E).$$ For any $\omega\in H^d
(X_{U },\cl E)$, write $\omega_\lambda$ for its projection
to $H^d (\Gamma^\lambda(U )\bk\fr H^d,\cl E)$.

\begin{defi}
The group of \emph{modular symbols} on $X(U )$ associated to
$E$ is the group $H^d_\cpt (X_{U },\cl E)$ of cohomology
with compact support.\end{defi}

Suppose that $E$ is a right $t_\lambda\Delta_0(\n
p^\alpha)t_\lambda^{-1}\cap\GL_2(F)$ for all $\lambda$. Define an
action of the Hecke algebra $R(S(p^\alpha),\Delta_0(\n p^\alpha ))$
by the formula:
$$\omega_\lambda(z)|T(x):=\sum_{j}\omega_\lambda(\alpha_{\lambda,j}z)|\alpha_{\lambda,j}
\in H^d (\Gamma^\mu(p^\alpha )\bk\fr H^d,\cl E)$$ (same notations as
in \S \ref{section-Hecke-operators}). Equivalently, identifying
$\omega_\lambda$ with a $d$-cocycle by the above isomorphism, $T(x)$
can be defined as in \cite{as} by the formula:
$$(\omega_\lambda)|T(x)(\gamma_0,\dots,\gamma_d):=\sum_j
\omega_\lambda(t_j(\gamma_0),\dots,t_j(\gamma_d))|\alpha_{\lambda,j},$$
where $t_j:\Gamma^\mu(p^\alpha)\to\Gamma^\lambda(p^\alpha )$ are
defined for $\gamma\in\Gamma^\mu(p^\alpha)$ by the equations
$\Gamma_\lambda \alpha_{\lambda,j}\gamma=\Gamma_\lambda
\alpha_{\lambda,l}$ and
$\alpha_{\lambda,j}\gamma=t_j(\gamma)\alpha_{\lambda,l}$.

The group of modular symbols $H^d_\cpt (X_{S(p^\alpha)},\cl E)$ is
an $R({S}(p^\alpha),\Delta_0(\n p^\alpha))$-module if $E$ is.

The modular symbol $\omega(f)$ associated to $f\in
S_{k,w}(S(p^\alpha),\C)$ can be described as follows. For any ring
$R$, let $L(n,R)$ be the $R$-module of homogeneous polynomials in
$2d$ variables $X=(X_\sigma)_{\sigma\in I},$ and
$Y=(Y_\sigma)_{\sigma\in I}$ of degree $n_\sigma$ in
$X_\sigma,Y_\sigma$. Denote by $\gamma=\mat abcd
\mapsto\gamma^*:=\mat d{-b}{-c}a$ the main involution of $\M_2(R)$.
Define a right action of $\GL_2(F)$ on $L(n,\C)$ by
$(P|\gamma)(X,Y):=\det(\gamma)^vP((X,Y)\gamma^*)$, where
$(X,Y)\gamma^*$ is matrix multiplication. Denote by $L(n,v,\C)$ the
right representation of $\GL_2(F)$ thus obtained. The differential
form $\omega(f_\lambda)(z):=f(z)(zX+Y)^{k-2}dz$ (usual multi-index
notations) satisfies the transformation formula for any
$\gamma\in\Gamma^\lambda(p^\alpha)$:
$$(\omega(f_\lambda))(\gamma(z))|\gamma =
\omega(f_\lambda)(z)
$$ hence, by \cite{MS}, $\omega(f_\lambda)\in
H^d(\Gamma^\lambda(p^\alpha)\bk\fr H^d ,\cl L(n,v,\C))$, where $\cl
L(n,v,\C)$ is the coefficient system on
$\Gamma^\lambda(p^\alpha)\bk\fr H^d$ associated to $L(n,v,\C)$.
Since $f_\lambda$ is a cusp form, it can be proved that
$\omega(f_\lambda)$ has compact support. Hence,
$\omega(f):=(\omega(f_1),\dots,\omega(f_h))\in
H^d_\cpt(X_{S(p^\alpha)},\cl L(n,v,\C))$.

For any character $\epsilon$ as above, write $L(n,v,\epsilon,R)$ for
the $\Delta_0(\fr n)$-module $L(n,v,R)$ with the action of
$\Delta_0(\fr n)$ twisted by $\epsilon$, that is, denoting by
$|_\epsilon$ this new action:
$P|_\epsilon\gamma:=\epsilon(\gamma)P|\gamma$ for
$\gamma\in\Delta_0(\fr n)$. If $f_\lambda\in
S_{k,w}(S_0(p^\alpha),\epsilon,\C)$, then $\omega(f_\lambda)\in
H^d(\Gamma_0^\lambda(p^\alpha),\cl L(n,v,\epsilon,\C))$, where $\cl
L(n,v,\epsilon,\C)$ is the coefficient system associated to
$L(n,v,\epsilon,\C)$.  Hence $\omega(f)\in
H^d_\cpt(X_{S_0(p^\alpha)},\cl L(n,v,\epsilon,\C))$.

A straightforward calculation shows that the map $f\mapsto\omega(f)$
is equivariant for the action of $R(S(p^\alpha),\Delta_0(\n
p^\alpha))$.

\subsection{$\Lambda$-adic  modular symbols}\label{OMS}

Define $\e=\e_S:=S\cap F^\times$. Then
$\Zbar_\alpha=(\r_p/p^\alpha\r_p)^\times/\e$, so
$G=\r_p^\times\times\r_p^\times/{\ec},$ where ${\ec}$ is the closure
of $\e$ in $\r_p^\times$. It follow that $G\simeq
(\r_p^\times\times\r_p^\times)/{\ec}$ via the map
$(x,y)\mapsto(xy,y)$. Embed diagonally ${\ec} $ in $\GL_2(\r_p)$ and
call $C$ the image. Let $N$ be the standard lower unipotent subgroup
of $\GL_2(\r_p)$. Define:
\begin{equation}\label{X}X:=NC\bk \GL_2(\r_p)\simeq\invlim
X_\alpha, \mbox{ where } X_\alpha:=S(p^\alpha)\r^\times\bk
S\r^\times.\end{equation} Let $F_p = F \otimes_\Q \Q_p =
\prod_{\frakp|p} F_\frakp$ and $Y:= N(F_p)C \bk \GL_2(F_p)$, where
$N(F_p)$ is the group of lower triangular matrices with entries in
$F_p$ and diagonal entries $1$. Write $\r_p=\prod_{\fr p\mid
p}\r_\fr p$, where $\r_\fr p$ is the completion of $\r$ at $\fr p$.
Define $(\r_\fr p^2)^\prime$ to be the set of primitive vectors of
$\r_\fr p^2$, that is, the pair of elements $(a,b)\in \r_p^2$ such
that at least one of $a$ and $b$ does not belong to $\fr p$. Set
$(\r_p^2)^\prime:=\prod_{\fr p\mid p}(\r_\fr p^2)^\prime.$ The map
$g=\mat abcd \mapsto ((a,b),\det(g))$ defines a bijection between
$X$ and ${\ec} \bk ((\r_p^2)^\prime \times \r_p^\times)$, where the
action of $e\in{\ec} $ is $e\cdot((x,y),z)=((ex,ey),e^2 z)$.

Define $\pi_\frakp$ to be the element in $\GL_2(\widehat F)$ whose
$\frakp$-component is $\mat 100{\frakp}$ and is 1 outside $\frakp$.
Note that $\pi_\fr p$ normalizes $N(F_p)=\prod_{\fr p^\prime\mid
p}N(F_{\fr p^\prime})$ because $\mat 100{\fr p}$ normalizes $N(F_\fr
p)$. Hence, it is possible to define an action of $\pi_\frakp$ on
$Y$ by letting $\pi_\fr p$ act on its $\fr p$-component as:
$$N(F_\frakp)g * \pi_\frakp = N(F_\frakp)\pi_\frakp^{-1} g \pi_\frakp.$$
Identify $Y$ with $\ec\bk((F_p^2)^\prime\times F_p^\times)$, where
$(F_p^2)^\prime:=\{(x,y)\in F_p^2: xy\neq 0\}$ via the map
$\gamma=\mat abcd\mapsto((a,b),\det(\gamma))$. Then
\[((x,y),z)*\pi_\frakp=((x,\frakp y),z),\] where for any
$y=\prod_{\fr p^\prime\mid p}y_\fr p$, write $\fr py:=\prod_{\fr
p^\prime\mid p, \fr p^\prime\neq\fr p} y_{\fr p^\prime}\times \fr
py_\fr p$. In particular, $\pi_\frakp$ does not affect the
determinant of the matrix.

Let $G^\pi$ be the semigroup generated by $\GL_2(\r_p)$ and
$\pi_\frakp$ for all divisors $\fr p$ of $p$. Using that any element
$s\in G^\pi$ can be expressed as a word in terms of $\GL_2(\r_p)$
and $\pi_\frakp$, and that the actions of the $\pi_\fr p$'s commute,
extend the $*$ action to $G^\pi$ by letting any $\pi_\fr p$ act
through $*$ and elements of $\GL_2(\r_p)$ through right
multiplication, so that \[N(F_p)g * s = N(F_p)\prod_{\fr p\mid
p}\pi_\fr p^{-c(\fr p)} g s\] for any $N(F_p)g \in Y$ and $s \in
G^\pi$, where $c(\fr p)$ is the number of times $\pi_\fr p$ appears
in the expression of $s$. Since this number does not depend on the
specific expression we chose, the action of $G^\pi$ on $Y$ is well
defined.


Let $Y^\prime$ denote the smallest subset of $Y$ containing $X$ and
stable under $G^\pi$. Define $\D_X$ (respectively, $\D_{Y^\prime}$)
to be the $\cl O$-module of $\cl O$-values measures on $X$
(respectively, on $Y^\prime$). For $s\in G^\pi$ and
$\mu\in\D_{Y^\prime}$, define $\mu*s$ by the integration formula:
$$\int_{Y^\prime}\varphi({\eta})d(\mu*\gamma)({\eta}):=\int_{Y^\prime}\varphi({\eta}*\gamma)d\mu({\eta}),$$
where $\varphi$ is any $\C_p$-valued step function on $Y^\prime$.
Denote by $\D_X\xrightarrow{i} \D_{Y^\prime}$ the canonical
inclusion defined by extending measures by zero and by
$\D_{Y^\prime}\xrightarrow{p} \D_X$ the canonical projection map.
If $\mu \in \D$ and $s \in G^\pi$, define $$\mu*s = p(i(\mu)*s).$$
Since by \cite[Lemma 3.1]{as} the kernel of $p$ is stable under
$G^\pi$, the action is well defined.

By the choice of $t_\lambda$ in \S \ref{section-2},
$\Gamma^\lambda(p^\alpha)\subseteq\GL_2(\r_p)$ for all $\lambda$.
Hence, ${\D}_X$ is a right $\Gamma^\lambda(p^\alpha)$-module for all
$\lambda$. Denote by $\cl D_X$ the coefficient system associated to
${\D}_X$ and set: $$\W:=H^d_\cpt(X_S,\cl D_X).$$


\begin{rem} Since $\rom{SO}_2(\R)$ is compact and isomorphic to the unit
circle $\C^1$ and the $\Gamma^\lambda$ are discrete, the stabilizer
$(\Gbar^\lambda)_{z_0}$ of any element $z_0\in\fr H^d$ is a finite
cyclic group. Since the groups $\Gbar^\lambda$ are torsion-free, it
follows that $\gamma$ is in the center of $\Gamma^\lambda$ and hence
acts trivially on $\D_X$. Hence the sheaf $\scrd_X$ is well-defined.
\end{rem}

Since $t_\lambda\Delta_0(\fr n)t_\lambda^{-1}\subseteq G^\pi$ for
all $\lambda=1,\dots,h$, it follows that $\W$ is an $R_\cl
O({S},\Delta_0(\n)):=\cl O\otimes_\Z R({S},\Delta_0(\n))$-module.
Let $G$ act on $X$ by left multiplication. Define $G'$ to be the
multiplicative subset of $(\r\times\r)/\r^\times$ consisting of pair
of elements $(x,y)$ such that $x$ and $y$ are prime to $p$. The map
$(a,d)\mapsto\omega(a_p^{n+2v})^{-1}T \mat a00d$ for $(a,d)\in G'$
considered in \S \ref{section-Hecke-operators} is multiplicative,
hence extends to a $\cl O$-algebra homomorphism $\cl O[G']\to R_\cl
O(S,\Delta_0(\fr n))$. On the other hand, $G'\subseteq G$, hence
$\cl O[G']$ embeds naturally on $\widetilde{\Lambda}=\cl
O[\![G]\!]$. Form the $\widetilde{\Lambda}$-algebra
$$\cl H:=R_\cl O({S},\Delta_0(\n))\otimes_{\cl
O[G']}\widetilde{\Lambda}.$$ Since the action of $G'$ on $\W$
extends to a continuous action of $G$, it follows that $\W$ is an
$\cl H$-module.

From the fact that $h_{2t,t}(S(p^\alpha),\cl O)$ is generated over
$\cl O$ by $T(\q)$ for all prime ideals $\q$ and those operators
coming from the action of $G_\alpha$, it follows that there is are
surjective homomorphisms of $\widetilde{\Lambda}$-algebras $\cl H\to
h_{2t,t}(S(p^\infty),\cl O)$ and $\cl H\to\cl R$.

Define a subset $X^\prime$ of $X$ as follows:
\begin{equation}\label{X'}X^\prime = \left\{\left. x = NC \mat abcd \in X
\right| a \in \r_p^\times \right\}.\end{equation} It is easy to
check that the definition does not depend on the choice of the
representative matrix used to define it and that $X^\prime$ can be
identified with the set $\ec\bk(\r_p^\times \times \r_p \times
\r_p^\times)$ under the above identification between $X$ and
$\ec\bk((\r_p^2)^\prime\times\r_p^\times)$. From now on denote
elements of $X$ by $((x,y),z)$, where $(x,y)$ is the first arrow on
the matrix and $z$ is its determinant.

Let $\kappa\in\cl A(\cl I)$ be an arithmetic point of weight $(n,v)$
and character $\epsilon$ factoring through $Z_\alpha$. Define the
\emph{specialization map} $\rho_{\kappa}:{\D_X}\to L
(n,v,\epsilon,\cl O)$ at $P$ by:
$$\mu \mapsto \rho_{\kappa}(\mu):=\int_{X^\prime}z^v\epsilon (x)(xY -
yX)^{n}d\mu(x,y,z).$$ Suppose that the conductor of $\epsilon$ is
$p^\alpha$ for some non negative integer $\alpha$. A simple
computation shows that
\[\rho_{\kappa}(\mu*\gamma)=\rho_{\kappa}(\mu)|_\epsilon\gamma\]
for $\gamma\in\GL_2(\r_p)\cap\Delta_0(p^\alpha)$. It follows that
the specialization map $\rho_{\kappa}$ is $\GL_2(\fr
r_p)\cap\Delta_0(p^\alpha)$-equivariant. Letting $K:=\rom{Frac}(\cl
O)$, there are $\GL_2(\r_p)\cap\Delta_0(p^\alpha)$-equivariant maps:
$$\rho_{\kappa}:\W\to\W_{\kappa}:=H^d_\cpt(X_{S(p^\alpha)},\cl L(n,v,\epsilon,\cl
O)).$$

\begin{proposition}\label{hecke/actions} Let $\Phi\in\W$.
\begin{enumerate}
\item For any prime ideal $\q$ of $\r$ prime to $p$:
$\rho_{\kappa}(\Phi*T(\fr q))=(\rho_{\kappa}(\Phi))|T(\fr q)$.
\item\label{part-2}
$\rho_{\kappa}(\Phi*T(p))=(\rho_{\kappa}(\Phi))|T_0(p)$.
\end{enumerate}\end{proposition}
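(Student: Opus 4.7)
The plan is to use a coset decomposition of the double cosets defining $T(x)$ to reduce both assertions to an equivariance statement for single matrices. Writing $S(p^\alpha)\,x\,S(p^\alpha)=\coprod_j S(p^\alpha)\alpha_{\lambda,j}$, the cocycle descriptions of the Hecke action in \S\ref{CMS} and in \S\ref{OMS} (via the $*$-action) are formally identical, so modulo a possible $\{x\}^v$-normalization the problem reduces to proving
\[\rho_\kappa(\mu*\alpha_{\lambda,j})=\rho_\kappa(\mu)|_\epsilon\alpha_{\lambda,j}\]
for each $j$.

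\noindent\textbf{Part (1).} When $\q$ is prime to $p$, the coset representatives $\alpha_{\lambda,j}$ may be chosen so that their $p$-components lie in $\GL_2(\r_p)\cap\Delta_0(p^\alpha)$. On this subgroup the equivariance $\rho_\kappa(\mu*\gamma)=\rho_\kappa(\mu)|_\epsilon\gamma$ has already been recorded in \S\ref{OMS}. Moreover the $*$-action of such $\gamma$ rescales $z$ by $\det(\gamma)$, so the $z^v$ factor in the definition of $\rho_\kappa$ automatically produces the $\det^v$ appearing in $|_\epsilon$; no extra $\{q\}^v$ normalization is needed, and the identity $\rho_\kappa(\Phi*T(\q))=\rho_\kappa(\Phi)|T(\q)$ follows term by term from the cocycle formula.

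\noindent\textbf{Part (2).} Factor $T(p)=\prod_{\fr p\mid p}T(\fr p)$ and argue one prime at a time. The local coset representatives of $T(\fr p)$ at the $\fr p$-place have the form $\smallmat1{b_j}01\cdot\pi_\fr p$ with $b_j\in\r_\fr p/\fr p\r_\fr p$; the factors $\smallmat1{b_j}01\in\GL_2(\r_p)$ are handled as in part~(1), so it suffices to evaluate $\rho_\kappa(\mu*\pi_\fr p)$. Applying the integration formula defining $*\pi_\fr p$ together with the rule $((x,y),z)*\pi_\fr p=((x,\fr py),z)$ (which preserves $X'$, since only the second coordinate is changed) gives
\[\rho_\kappa(\mu*\pi_\fr p)=\int_{X'}z^v\epsilon(x)(xY-\fr py X)^n\,d\mu(x,y,z).\]
The polynomial $(xY-\fr py X)^n$ coincides with $(xY-yX)^n|_\epsilon\smallmat 100{\pi_\fr p}$ divided by $\{\fr p\}^v$, the discrepancy coming from the classical $\det^v$ term which the $*$-action of $\pi_\fr p$ does not produce (conjugation preserves the determinant, so $z$ is unchanged). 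This missing $\{\fr p\}^v$ is precisely the normalization distinguishing $T(\fr p)$ from $T_0(\fr p)$, which explains the asymmetry between parts~(1) and~(2). The main subtlety lies in tracking this normalization carefully and in checking that the extension-by-zero and projection maps $\D_X\hookrightarrow\D_{Y'}\twoheadrightarrow\D_X$ do not distort the integral over $X'$, which reduces to a support check for the kernel of the projection.
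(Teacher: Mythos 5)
Your proof is correct and takes essentially the same route as the paper's: both arguments reduce to the coset decomposition of $T(p)$ and trace the discrepancy between the $*$-action on $\D_X$ (which fixes the determinant coordinate $z$, since conjugation by $\pi_\fr p$ preserves determinants) and the classical $|$-action on $L(n,v,\epsilon,\cl O)$ (which carries a $\det(\gamma)^v$ factor), identifying this discrepancy with the $\{p\}^v$ normalization that turns $T(p)$ into $T_0(p)$. The only difference is cosmetic: you first factor $T(p)=\prod_{\fr p\mid p}T(\fr p)$ and then split each local coset representative $\smallmat 1b0{\varpi}$ into a unipotent part and $\pi_\fr p$, whereas the paper runs the single computation for $T(p)$ directly with representatives $\alpha_{\lambda,t}$; this refinement makes the source of the $\{\fr p\}^v$ per prime transparent but does not change the argument. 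Your observation that $*\pi_\fr p$ leaves the first coordinate $x$ (hence $X'$) unchanged, so the extension-by-zero/projection pair $\D_X\hookrightarrow\D_{Y'}\twoheadrightarrow\D_X$ does not disturb the integral over $X'$, is exactly the support check needed and matches what the paper uses implicitly.
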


\begin{proof}
The equivariance for the action of Hecke operators $T(\q)$ in the
first statement is immediate because of the
$\GL_2(\r_p)\cap\Delta_0(p^\alpha)$-equivariance of $\rho_{\kappa}$.
It remains to check the action of $T(p)$. To see this, write
$\Gamma^\lambda\pi\Gamma^\lambda=\coprod_{t}\Gamma^\lambda\alpha_{\lambda,t}$
and note that
\begin{eqnarray} \rho_{\kappa} (\Phi*T_0(p) ) &=& \int_{X^\prime} \sum_t
z^v \epsilon(x)(xY-yX)^n\ d(\Phi*\alpha_{\lambda,t})\nonumber \\
&=&
\int_{X^\prime} \sum_t z^v \epsilon(x)(xY-yX)^n\ d(\pi^{-1} \Phi \alpha_{\lambda,t})\nonumber \\
&=& \sum_t \int_{X^\prime}
\{p^v\}^{-1} z^v \epsilon(x)(xY-yX)^n|\alpha_{\lambda,t}\ d(\Phi) \nonumber \\
&=& \rho_{\kappa} (\Phi)|T_0(p) . \nonumber \end{eqnarray} This
proves the second formula.\end{proof}

Note that
\[\W=\prod_{\lambda=1}^h\W^\lambda, \mbox{ where }\W^\lambda=H^d_{\rm
cpt}(\Gbar^\lambda\bk\fr H^d,\cl D_X)\]
\[\W_\kappa=\prod_{\lambda=1}^h\W^{\lambda}_\kappa, \mbox{ where }
\W^{\lambda}_\kappa:=H^d_{\rm
cpt}(\Gbar^{\lambda}_0(p^\alpha)\bk\fr H^d,\cl L(n,v,\epsilon,\cl
O)).\] Any element
$\Phi\in\W$ will be written as $(\Phi_\lambda)_{\lambda=1,\dots,h}$
while any element of $\omega\in\W_\kappa$ will be denoted as
$(\omega_{\lambda})_{\lambda=1,\dots,h}$. Define:
\begin{equation}\label{Phi-theta-kappa}
\Phi_{\lambda,\theta_\kappa}:=\rho_\kappa(\Phi_\lambda)\in
\W_{\kappa}^{\lambda}.\end{equation}

\subsection{The Control Theorem}\label{sec-control-th}

Fix $\theta:\cl R\to\overline{\cl L}$ (where $\cl
L=\rom{Frac}(\Lambda)$) and denote as in \S \ref{NOHA} by $\cl I$
the integral closure of $\Lambda$ in $\Image(\theta)$. Recall the
specialization map $\theta_\kappa:=\kappa\circ\theta : \scrr \to
\qpbar$ which corresponds to an eigenform $f_\kappa$. The map
$\theta_\kappa$ extends to the localization $\scrr_{P_\kappa}$ of
$\cl R$ at $P_\kappa$ and one can intertwine $\theta_\kappa$ with
the map $\rho_{\kappa}$ defining
\begin{equation}\label{rho-star} \rho_\kappa : \W \times
\scrr_{P_\kappa} \rightarrow \W_{\kappa}\end{equation} by
$$\sum_\lambda \Phi_\lambda \times r_\lambda \mapsto \sum_\lambda
\rho_\kappa(\Phi_\lambda) \cdot \theta_{\kappa} (r_\lambda).$$ This
is well defined because, if $g$ belongs to the free part $W$ of $G$
and is represented by the matrix $\mat a00d \in \GL_2(\r_p)$, then
we have for $\mu \in \D_X$,
$$ \rho_{\kappa} (g \mu)=\rho_{\kappa}(g) \rho_{\kappa} (\mu).$$
Since $\rho_\kappa (\phi g, r) = \rho_\kappa (\phi, gr)$ for $g \in
W$ and by continuity the same is true for any element in $\Lambda$,
the map \eqref{rho-star} induces a homomorphism
$$\rho_\kappa : \W \otimes_{\Lambda} \scrr_{P_\kappa} \rightarrow
\W_{\kappa}$$ which is Hecke equivariant.

For any $\cl H$-module $M$, let $M^\o$ denote its ordinary part,
that is, the maximal subspace of $M$ on which the $T(p)$ operator
acts as a unit. Let $h: \scrh \to \scrr$ be the natural map obtained
by the action of Hecke operators on $\Lambda$-adic  cusp forms. For
any arithmetic point $\kappa$, let $h_\kappa$ be the composition of
$h$ with the localization morphism $\cl R\to \cl R_{P_\kappa}$. For
any $\scrh \otimes_\Lambda \scrr_{P}$-module $M$, let
$$M^{h_\kappa}
=\{ m \in M \ |\ (T(\q) \otimes 1) m = (1 \otimes h_\kappa(T(\q)))
\cdot m \textrm{ for all prime ideals } \q \textrm{ in } \fr r \}$$
denote the $h_\kappa$-eigenspace of $M$. If $f_\kappa$ is a
classical eigenform for an arithmetic point $\kappa$, let
$$\W^{f_\kappa}_{\kappa} = \{ \phi \in \W_{\kappa} |\
T_0(\fr q)\phi = a_\fr q(g) \phi \}$$ denote the
$f_\kappa$-eigenspace of $\W_{\kappa}$, where $a_\fr q(g)$ is the
eigenvalue of the $T_0(\fr q)$ operator on $f_\kappa$. Hence, there
is a map:
$$\rho_\kappa: (\W \otimes_{\Lambda} \scrr_{P_\kappa})^{h_\kappa} \to
\W_{\kappa}^{f_\kappa}.$$ The action of the involution
\[\tau:=\left( \mat {1}00{-1},\dots,\mat {1}00{-1} \right) \in
\GL_2(\r_p)\cap\Delta_0(p^\alpha)\] on
$\GL_2(\r_p)\cap\Delta_0(p^\alpha)$-modules gives rise to $2^d$
eigenspaces indexed by $\sgn \in \{ \pm \}^d$. For each
$\GL_2(\r_p)$-module $M$, let $M^\sgn$ denote the corresponding
eigenspace.

Note that, since $S\supseteq U_1(\fr n)$, there is a notion of
\emph{primitive homomorphisms} which can be introduced as in
\cite[pages 317, 318]{H1}. Say that an arithmetic point $\kappa$ is
\emph{primitive} if both $\theta$ and $\theta_\kappa$ are primitive
characters. Note that in particular $f_\kappa$ is a primitive form
of level $U_1(\fr n)$ (see \cite[(3.10b)]{H1}) and that, by
\cite[Corollary 3.7]{H1},
\[\cl R\otimes_\Lambda \cl K\simeq \cl K\oplus \cl B\] as an algebra
direct sum such that the projection to $\cl K={\rm Frac}({\rm
Im}(\theta))$ coincides with $\theta$ on $\cl R$.

\begin{theorem}\label{main-theorem} Let $\kappa\in\cl A(\cl I)$ be
a primitive arithmetic point of weight $(n_\kappa ,v_\kappa )$ and
character $\epsilon_\kappa $. For each $\sgn \in \{ \pm \}^d$ the
map
$$\rho_\kappa: (\W \otimes_{\Lambda}
\scrr_{P_\kappa })^{h_\kappa ,\sgn}/P_\kappa  (\W \otimes_{\Lambda}
\scrr_{P_\kappa })^{h_\kappa ,\sgn}\rightarrow\W_{\kappa }^{f_\kappa
, \sgn}$$ is an isomorphism.\end{theorem}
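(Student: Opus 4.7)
The plan is to follow the strategy of \cite[Theorem 5.15]{GS} and its higher-dimensional analogue in \cite{as}, organizing the proof into three stages: a local identification on coefficients, a long exact sequence in compactly supported cohomology, and an eigenspace extraction using primitivity.

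For the local stage, I would establish a short exact sequence of $\bigl(\GL_2(\r_p) \cap \Delta_0(p^\alpha)\bigr)$-modules
\[ 0 \to \cl N_\kappa \to \D_X \xrightarrow{\rho_\kappa} L(n,v,\epsilon,\cl O) \to 0. \]
Surjectivity of $\rho_\kappa$ follows by exhibiting, for each monomial in $L(n,v,\epsilon,\cl O)$, an explicit compactly supported measure on $X^\prime$ whose integral produces that monomial. The kernel $\cl N_\kappa$ visibly contains $P_\kappa \cdot \D_X$ because $\rho_\kappa$ factors through the $P_\kappa$-character of $G$. The key computation is that after localization at $P_\kappa$ these two coincide,
\[ \cl N_\kappa \otimes_{\wt\Lambda} \wt\Lambda_{P_\kappa} = P_\kappa \D_X \otimes_{\wt\Lambda} \wt\Lambda_{P_\kappa}, \]
which is the Hilbert analogue of the density argument in \cite{as} and ultimately reduces to the density of $P_\kappa$-characters inside the space of continuous functions on $X$.

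For the cohomological stage, I would apply compactly supported cohomology to the displayed sequence at level $S(p^\alpha)$ and compare with $\W = H^d_\cpt(X_S, \cl D_X)$ via the natural corestriction between the two levels. This produces a long exact sequence connecting $\W$ to $\W_\kappa$ through degrees $d-1$, $d$ and $d+1$. Tensoring with $\cl R_{P_\kappa}$ over $\Lambda$ and passing to the ordinary part $(\cdot)^\o$, Hida's control theorem for nearly ordinary cohomology (a consequence of \cite[Theorem 2.4]{H2} together with the Eichler--Shimura description of $\W_\kappa^\o$) shows that the localized ordinary cohomology is concentrated in middle degree $d$, so the flanking groups $H^{d-1}_\cpt$ and $H^{d+1}_\cpt$ vanish after $(\cdot)_{P_\kappa}^\o$. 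The long exact sequence then collapses to a short exact sequence producing a natural isomorphism
\[ \bigl( \W \otimes_\Lambda \cl R_{P_\kappa} \bigr)^\o \big/ P_\kappa \bigl( \W \otimes_\Lambda \cl R_{P_\kappa} \bigr)^\o \simeq H^d_\cpt(X_{S(p^\alpha)}, \cl L(n,v,\epsilon,\cl O))^\o. \]

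For the final stage, I would invoke primitivity of $\kappa$: by \cite[Corollary 3.7]{H1} recalled just above the statement, $\cl R \otimes_\Lambda \cl K \simeq \cl K \oplus \cl B$ as algebras, so passing to the $h_\kappa$-eigenspace on both sides isolates the $f_\kappa$-isotypic summand $\W_\kappa^{f_\kappa}$ on the right. The involution $\tau$ commutes with all the maps above, so splitting into $\sgn$-eigenspaces respects the previous isomorphism and yields the claimed identification. The main obstacle I foresee is precisely the vanishing of the flanking cohomology groups after localization at $P_\kappa$ and passage to $(\cdot)^\o$: for $d = 1$ as in \cite{GS} this is automatic, but for $d > 1$ it requires genuine input from Hida's theory and a careful analysis of how each $U_\fr p$ for $\fr p \mid p$ acts on cohomology in degrees $d \pm 1$ and on the boundary. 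Assumption \ref{assumption-2} is crucial here, ensuring that $\cl D_X$ is a well-defined sheaf and that Poincar\'e duality on each $\Gbar^\lambda \bk \fr H^d$ is available without torsion corrections.
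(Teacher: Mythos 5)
Your high-level skeleton (specialize coefficients, pass to cohomology, isolate the $f_\kappa$-eigenspace using primitivity via \cite[Corollary 3.7]{H1}) matches the rough shape of the argument, but the two steps carrying the real weight are either false as stated or assert a vanishing the paper never proves.

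The first problem is the claimed local identification $\cl N_\kappa \otimes_{\wt\Lambda}\wt\Lambda_{P_\kappa} = P_\kappa\D_X\otimes_{\wt\Lambda}\wt\Lambda_{P_\kappa}$. This is not true at the coefficient-module level: the specialization map $\rho_\kappa$ integrates only over the subset $X'\subset X$ where the first coordinate is a unit, so any measure supported away from $X'$ (e.g.\ a Dirac mass at a point with $x\in\frakp$) lies in $\cl N_\kappa$, yet such a measure is visibly not in $P_\kappa\D_X$, and localizing at $P_\kappa$ does not change this. The correct statement, $\Ker(\rho_\kappa)=P_\kappa\W^\o$, only holds \emph{at the level of cohomology and after passing to the nearly ordinary part}, and the proof (Proposition \ref{first-prop}, following Ash--Stevens) hinges precisely on the extra structure missing from your sequence: one writes an ordinary cocycle as $c=T(p^m)b$ and uses the $*$-action of $\pi^m$ to move the support of $b$ onto the sets $X_m$, so that being in $\Ker(\rho_\kappa)$ forces $b$ to take values in $P_\kappa\D_X$. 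A coefficient short exact sequence cannot see this mechanism.

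The second problem is the asserted vanishing of $H^{d-1}_\cpt$ and $H^{d+1}_\cpt$ after $(\cdot)^\o_{P_\kappa}$, which you flag yourself as the main obstacle. The paper does not establish this and does not need it: surjectivity is obtained instead by a dimension count. One first shows (Corollary \ref{prop2}) that the $h_\kappa$-eigenspace of $\W_{\cl L}\otimes_{\cl L}\cl K_{P_\kappa}$ has dimension at least $2^d$, using the sheaf identification $\cl D_X\simeq\invlim_\alpha\cl F_\alpha$ together with Poincar\'e and Pontryagin duality to compare $\W$ with the nearly ordinary parabolic cohomology $\cl V^{*,\o}_\infty$, whose localization at $P_\kappa$ is free of rank $2^d$ over $\cl R_{P_\kappa}$. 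Intersecting with the integral lattice then gives a rank-$2^d$ submodule, and since $\W_\kappa^{f_\kappa}$ is also $2^d$-dimensional, the injective specialization map (forced by the kernel computation above plus the Ext lemma \ref{ext-version-of-ash-stevens-lemma}) is automatically surjective. Replacing the speculative long-exact-sequence vanishing with this Eichler--Shimura/duality count is where the argument actually closes.
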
 The proof of this Theorem
will be given in \S \ref{proof-control-theorem}. Before of
explaining the proof, we need some preliminary results, stated in \S
\ref{section-prop-1} and \S \ref{section-prop-2}.

\subsection{Description of $\Ker(\rho_\kappa)$}\label{section-prop-1}

\begin{proposition}\label{first-prop} The group
$\W^\o$ of {ordinary $\Lambda$-adic modular symbols} is a free
$\Lambda$-module of finite rank. The kernel of $\rho_\kappa$ is
equal to $P_\kappa \W^\o$.
\end{proposition}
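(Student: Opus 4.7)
The plan is to proceed in two stages: first establish that $\W^\o$ is a finitely generated free $\Lambda$-module, and then identify $\Ker(\rho_\kappa)$ with $P_\kappa \W^\o$.

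For the freeness, I would adapt Hida's strategy from \cite{H2} as carried out for modular symbols in \cite{GS} and \cite{as}. Since $\D_X$ is a compact $\widetilde{\Lambda}$-module, the cohomology $\W = H^d_\cpt(X_S, \cl D_X)$ inherits a $\widetilde{\Lambda}$-module structure, and the nearly ordinary idempotent $e = \lim_n T_0(p)^{n!}$ cuts out $\W^\o$. Finite generation over $\Lambda$ follows from a topological Nakayama argument: the quotient $\W^\o / P_\kappa \W^\o$ maps into a finite-dimensional classical modular symbol space by specialization, and the control theorem \eqref{control-theorem-hecke-algebras} for Hecke algebras bounds the target uniformly. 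Freeness then follows by a standard Hida-style argument: since $\Lambda$ is a regular local ring and $\W^\o / P_\kappa \W^\o$ has $\cl O$-rank independent of the primitive arithmetic point $\kappa$ (again by \eqref{control-theorem-hecke-algebras}), the finitely generated $\Lambda$-module $\W^\o$ must be free.

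For the kernel description, I would establish a short exact sequence of $\widetilde{\Lambda}$-modules
$$0 \to \D_X \xrightarrow{\cdot P_\kappa} \D_X \xrightarrow{\rho_\kappa} L(n,v,\epsilon,\cl O) \to 0,$$
at least after applying the ordinary projector. The ideal $P_\kappa \subset \widetilde{\Lambda}$ is generated by the elements $g - P_\kappa(g)$ for $g \in G$, and one checks by direct integration against the kernel polynomial $z^v\epsilon(x)(xY-yX)^n$ that any such element applied to a measure $\mu$ contributes zero to $\rho_\kappa(\mu)$. Conversely, the inclusion $\Ker(\rho_\kappa) \cap \D_X^\o \subseteq P_\kappa \D_X^\o$ uses that the ordinary projector effectively concentrates support on the subset $X'$, where the integration pairing is nondegenerate modulo $P_\kappa$. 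Taking compactly supported cohomology and applying the exact ordinary projector yields
$$H^{d-1}_\cpt(X_S, \cl L(n,v,\epsilon,\cl O))^\o \to \W^\o \xrightarrow{\cdot P_\kappa} \W^\o \xrightarrow{\rho_\kappa} H^d_\cpt(X_S, \cl L(n,v,\epsilon,\cl O))^\o.$$
The containment $P_\kappa \W^\o \subseteq \Ker(\rho_\kappa)$ is automatic, and the reverse inclusion follows once the ordinary part of $H^{d-1}_\cpt$ with these coefficients is shown to vanish, which is expected because nearly ordinary Hecke-isotypic components concentrate in middle degree $d$ on Hilbert modular varieties.

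The main obstacle I anticipate is establishing the short exact sequence at the level of coefficient systems, in particular the identification $\Ker(\rho_\kappa)^\o = (P_\kappa \D_X)^\o$. The subtlety is that $\rho_\kappa$ integrates only over $X' \subsetneq X$, so one must carefully analyze how $T_0(p)$ (and hence the ordinary projector) interacts with the decomposition $X = X' \sqcup (X \setminus X')$, following the pattern of \cite[Lemma 3.1]{as} and \cite[Proposition 5.5]{GS}. A secondary technical point is the ordinary vanishing of $H^{d-1}_\cpt$ used to close the long exact sequence; this may require a comparison with cuspidal cohomology or an explicit argument on the Borel-Serre boundary of $X_{S(p^\alpha)}$.
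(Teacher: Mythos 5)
Your plan for the freeness part is in the right spirit (Nakayama, constant rank across arithmetic points via the Hecke-algebra control theorem), and your easy containment $P_\kappa\W^\o\subseteq\Ker(\rho_\kappa)$ is correct: the generators $g-P_\kappa(g)$ of $P_\kappa$ kill $\rho_\kappa$ because $\rho_\kappa(g\mu)=P_\kappa(g)\rho_\kappa(\mu)$. But your proposed short exact sequence $0\to\D_X\xrightarrow{\cdot P_\kappa}\D_X\xrightarrow{\rho_\kappa}L(n,v,\epsilon,\cl O)\to 0$ does not make sense: $P_\kappa$ is a prime ideal of height $s=d+1+\delta_F\geq 2$ in $\Lambda$, so it is not principal and ``multiplication by $P_\kappa$'' is not a map. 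What one can write down is a Koszul complex on the $s$ generators, but then the long exact sequence is not a single connecting map and the vanishing you would need involves all degrees $d-1,\dots,d-s$, not just $H^{d-1}_\cpt$. You flag the identification $\Ker(\rho_\kappa)^\o=(P_\kappa\D_X)^\o$ as the obstacle; that identification is precisely the hard content of the proposition, and the SES framing does not supply it.

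The paper avoids the SES entirely and works at the level of cocycles, following \cite[Theorem 5.1]{as}. For the hard inclusion $\Ker(\rho_\kappa)\subseteq P_\kappa\W^\o$, one represents a class $c$ in the kernel by a cocycle, uses invertibility of $T(p)$ on $\W^\o$ to write $c=T(p^m)b$ (this is where ordinarity is used), and then exploits a support disjointness lemma (\cite[Lemma 6.6]{as}: $X_m\cap X*\alpha_{\lambda,t}=\emptyset$ for nontrivial cosets) to show that integration of $b$ against characteristic functions picks up only one term, which vanishes because $c\in\Ker(\rho_\kappa)$. By \cite[Lemma 6.3]{as} this forces $b$ to take values in $P_\kappa\D_X$, and then \cite[Lemma 1.2]{as} --- a regular-sequence/Koszul statement doing the careful homological bookkeeping you wanted the SES to do --- identifies the image of $H^d(\Gbar^\lambda,P_\kappa\D_X)$ in $H^d(\Gbar^\lambda,\D_X)$ with $P_\kappa H^d(\Gbar^\lambda,\D_X)$. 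Finite generation is then deduced from the kernel equality plus compact Nakayama, rather than proved beforehand as in your outline. So your homological instinct is pointing in the right direction, but the ``support under $T(p^m)$'' argument is the missing engine, and the exactness has to be handled through the regular-sequence lemma rather than a two-term SES.
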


\begin{proof} This is \cite[Theorem 5.1]{as}, so only a sketch of
the proof will be given. First prove the equality $P_\kappa \W^{\o
}=\Ker(\rho_\kappa )$.

\begin{enumerate}
\item $\Ker(\rho_\kappa )\supseteq P_\kappa \W^{\o }$: Let $\Phi\in P_\kappa \W^{\o }$
and write $\Phi=(\Phi_1,\dots,\Phi_h)$. Fix $\lambda$ and represent
$\Phi_\lambda$ by a cocycle $z$ as above. It follows from
\cite[Lemma 6.3]{as} that $\int_X\varphi^{(m)}dz(f)=0$ for all $f\in
F_d^\lambda$ and all characteristic functions $\varphi^{(m)}$. Since
the function
$$((x,y),z)\mapsto \epsilon(x)z^{v}(xY-yX)^{k}$$ appearing
in the specialization map $\rho_{\kappa }$ can be written as an
uniform limit of functions $\varphi^{(m)}$, the inclusion follows.

\item $\Ker(\rho_\kappa )\subseteq P_\kappa \W^{\o }$: Let $c\in \Hom_{\bar
\Gamma^\lambda}(F^\lambda_k,\D_X)$ and choose $b$ such that
$c=T(p^m)b$: this is possible because $T(p)$ induces an isomorphism
on $\W^{\o }$ and, since $p$ is a principal ideal of $\r$, the
$T(p)$ operator preserves each of the cohomology groups $H^d
(\Gbar^\lambda,\D_X)$. Set $\pi:=\mat 100p$. Write
$\Gamma^\lambda\pi^m\Gamma^\lambda
=\coprod_t\Gamma^\lambda\alpha_{\lambda,t}$ and
$\gamma_{\lambda,t}:=\pi^{-m}\alpha_{\lambda,t}$. By \cite[Lemma
6.1]{as},
$$\int_X \varphi^{(m)}(y)\ dc(f)(y)= \sum_t
\int_X\varphi^{(m)} (y*\alpha_{\lambda,t}
)db(f\gamma_{\lambda,t}^{-1})(y).$$ Since $X_m\cap
X*\alpha_{\lambda,t}=\emptyset$ for $\alpha_{\lambda,t}\neq 1$ by
\cite[Lemma 6.6]{as}, the above sum is equal to
$$\int_{X_m}db(f\gamma_{\lambda,1}^{-1})(y)=\rho^*_\kappa
(\Phi_\lambda)(f\gamma_{\lambda,1}^{-1})=0.$$
From \cite[Lemma 6.3]{as}  if follows that $b$ takes values in
$P_\kappa \D_X$. Hence, $b$ belongs to the image of
$H^d(\Gbar^\lambda,P_\kappa \D_X)$ in $H^d(\Gbar^\lambda,D_X)$
which, by \cite[Lemma 1.2]{as}, is equal to $P_\kappa
H^d(\Gbar^\lambda,\D_X)$.
\end{enumerate}

Finally, from the equality $P_\kappa \W^{\o }=\Ker(\rho_\kappa )$
and a compact version of Nakayama's lemma, it follows that $\W^{\o
}$ is a $\Lambda$-module of finite type.\end{proof}

\subsection{Lifting system of eigenvalues}\label{section-prop-2}

Define:
\[\cl V_\alpha:=H^d_{\rm par}(X_{S(p\alpha)},K/\cl O),
\quad \cl V_\infty:=\dirlim_\alpha\cl V_\alpha,\]\[ \cl
V_\alpha^*:=\Hom_\cl O(\cl V_\alpha,K/\cl O) ,\quad \cl
V^*_\infty:=\Hom_\cl O(\cl V_\infty,K/\cl O)
\] where the direct
limit is computed with respect to the projection maps
$X_{S(p^\beta)}\to X_{S(p^\alpha)}$ for $\beta\geq\alpha$. The Hecke
algebras $h_{k,w}(S_0(p^\infty),\epsilon,\cl O)$ defined in \S
\ref{NOHA} can be equivalently introduced as $\invlim_\alpha
h^\prime_{k,w}(S_0(p^\alpha),\epsilon,\cl O)$, where
$h^\prime_{k,w}(S_0(p^\alpha),\epsilon,\cl O)$ is the image in
$\End_\cl O(H^d_{\rm par}(S(p^\alpha),\cl L(n,v,\epsilon,K/\cl O))$
of the algebra generated over $\cl O$ by the Hecke operators; the
same observation holds for the ordinary parts: see \cite[\S 3]{H6}
for details.

Let $P_\kappa $ be an arithmetic point corresponding to a
{primitive} form $f_\kappa $ of tame level $S$. Let $\cl R_{P_\kappa
}$ denote the localization of $\cl R$ at $P_\kappa $. Set $\cl
K_{P_\kappa }:={\rm{Frac}}(\cl R_{P_\kappa })$. Let $\cl
V^{*,\o}_\infty$ denote the ordinary submodule of $\cl
V^{*}_\infty$. For any arithmetic character $P_{n,v,\epsilon}$ which
factors through $\cl R_{P_\kappa }$, let and $\cl
V^{*,\o}_{\infty,P_{n,v,\epsilon}}$ denote the localization of $\cl
V^{*,\o}_\infty$ at $P_{n,v,\epsilon}$.

\begin{proposition} Let $d$ be odd.
Then $\cl V^{*,\o}_{\infty,{P_\kappa} }$ is free of rank $2^d$ over
$\cl R_{{P_\kappa} }$ and for each sign $\sgn\in\{\pm 1\}^d$, its
$\sgn$-eigenmodule is free of rank one.\end{proposition}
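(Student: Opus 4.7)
\medskip

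\noindent\textbf{Proof sketch (proposal).} The plan is to combine Hida's control theorem for Hecke algebras \eqref{control-theorem-hecke-algebras} with a parallel cohomological control theorem for $\cl V^{*,\o}_\infty$, the classical Eichler--Shimura isomorphism for Hilbert cusp forms, and a topological Nakayama argument over the local ring $\cl R_{P_\kappa}$. The hypothesis that $d$ is odd will be used to obtain a clean parity decomposition of the middle-degree parabolic cohomology under the involution $\tau$.

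First I would establish a cohomological control theorem at the $\Lambda$-adic level. Dualizing the direct limit defining $\cl V^*_\infty$, applying the ordinary projector, and arguing as in the proof of Proposition \ref{first-prop} (compact Nakayama on the Pontryagin dual) together with exactness of localization, I would obtain for every arithmetic prime $P$ factoring through $\cl R_{P_\kappa}$ an isomorphism of Hecke- and $\tau$-modules
\[\cl V^{*,\o}_{\infty,P_\kappa}/P\,\cl V^{*,\o}_{\infty,P_\kappa} \;\simeq\; \Hom_{\cl O}\!\bigl(H^d_{\rom{par}}(X_{S_0(p^\alpha)},\cl L(n,v,\epsilon,K))^{\o}[P],\,K\bigr),\]
where $(n,v,\epsilon)$ is the data attached to $P$. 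This is the analogue for Hilbert modular varieties of \cite[\S 3]{H6}, and finite generation of $\cl V^{*,\o}_\infty$ over $\cl R$ ensures that the question about the $\cl R_{P_\kappa}$-module on the left-hand side reduces to a question about classical cohomology.

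Next I would specialize at $P_\kappa$. The algebra splitting $\cl R \otimes_\Lambda \cl K \simeq \cl K \oplus \cl B$ recalled before Theorem \ref{main-theorem} cuts out the $f_\kappa$-isotypic piece in the right-hand side of the display; this is exactly where primitivity of $\kappa$ is used. The classical Eichler--Shimura isomorphism for Hilbert cusp forms of weight $(k(\kappa),w(\kappa))$ and character $\epsilon(\kappa)$ then identifies this isotypic piece with $2^d$ copies of $K\cdot f_\kappa$, one line for each sign $\sgn \in \{\pm 1\}^d$ of the $\tau$-action. Oddness of $d$ enters here to control complex conjugation at the archimedean places and to rule out non-cuspidal or Eisenstein contributions in middle-degree cohomology that would otherwise blur the sign decomposition.

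I would then close by topological Nakayama. The ring $\cl R_{P_\kappa}$ is local and $\cl V^{*,\o}_{\infty,P_\kappa}$ is finitely generated over it; its reduction modulo $P_\kappa$ has residue-field dimension $2^d$ (respectively one on each $\sgn$-eigenspace) by the previous step. Assumption \ref{assumption-2} implies that the integral cohomology at every finite level is $\cl O$-torsion free, hence $\cl V^{*,\o}_{\infty,P_\kappa}$ is $\Lambda_{P_\kappa}$-torsion free; comparing the generic $\cl R_{P_\kappa}\otimes_\Lambda\cl L$-rank with the special rank then yields freeness of rank $2^d$, and the same argument applied to each $\sgn$-summand yields the cyclic freeness. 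The main obstacle I anticipate is keeping the ordinary projector, Pontryagin duality and the $\tau$-action simultaneously compatible throughout the control-theorem chain; a secondary difficulty is checking that no classes from the complementary factor $\cl B$ leak into the $f_\kappa$-isotypic piece after localization at $P_\kappa$, which again depends essentially on primitivity.
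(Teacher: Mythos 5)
Your outline shares the broad shape of the paper's argument (a control theorem for $\cl V^{*,\o}_\infty$ followed by a Nakayama-type conclusion), but the key identification is carried out differently, and the difference matters. The paper quotes directly from \cite[page 1032]{H6} the isomorphism
\[
(\cl V^{*,\o}_{\infty,P})^{\sgn}/P(\cl V^{*,\o}_{\infty,P})^{\sgn}\;\simeq\; h^{\o}_{k,w}(S_0(p^\alpha),\epsilon,K),
\]
and then invokes the Hecke-algebra control theorem \eqref{control-theorem-hecke-algebras} to see that the right-hand side is free of rank \emph{one} over $\cl R_P/P\cl R_P$. Reducing further modulo the maximal ideal $\fr m$ of $\cl R_{P_\kappa}$ and applying \cite[Lemma 3.10]{H2} gives freeness. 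The whole point is that the specialization is identified as an $\cl R_P/P$-\emph{module}, and that identification is with the Hecke algebra itself.

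Your step ``the classical Eichler--Shimura isomorphism then identifies this isotypic piece with $2^d$ copies of $K\cdot f_\kappa$'' is where the argument goes astray. The dual of $H^d_{\rom{par}}(X_{S_0(p^\alpha)},\cl L(n,v,\epsilon,K))^{\o}$ has $K$-dimension $2^d\cdot\mathrm{rank}_{\wt\Lambda_P}\cl R_P$, not $2^d$; cutting down to the $f_\kappa$-eigenline using the splitting $\cl R\otimes_\Lambda\cl K\simeq\cl K\oplus\cl B$ throws away exactly the $\cl R_P$-module structure you need, leaving you only with a $\cl K$-line. You then cannot conclude that the reduction modulo $\fr m$ is one-dimensional over the residue field of $\cl R_{P_\kappa}$, which is the input Nakayama requires to produce freeness over $\cl R_{P_\kappa}$ (rather than over $\Lambda_{P_\kappa}$ or $\cl K$). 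The paper's direct identification with $h^{\o}_{k,w}(S_0(p^\alpha),\epsilon,K)\simeq\cl R_P/P\cl R_P$ is what sidesteps this. Your remaining points --- using Assumption \ref{assumption-2} for torsion-freeness, and appealing to oddness of $d$ to handle Eisenstein classes in middle degree --- are plausible heuristics but are not where the paper puts its effort; the $d$-odd hypothesis is absorbed into the cited machinery from \cite{H6} and is not explicitly tracked in this proof.
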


\begin{proof} It is enough to prove the second statement.
First note that as in \cite[page 1032]{H6} there is an isomorphism:
\[(\cl V^{*,\o}_{\infty,P_{n,v,\epsilon}})^\sgn/P_{n,v,\epsilon}(\cl
V^{*,\o}_{\infty,P_{n,v,\epsilon}})^\sgn \simeq
h_{k,w}^\o(S_0(p^\alpha),\epsilon,K)\] for any arithmetic point
$P_{n,v,\epsilon}$. By \eqref{control-theorem-hecke-algebras},
$k_{k,w}(S_0(p^\infty),\epsilon,K)$ is free of rank one over $\cl
R/P_{n,v,\epsilon}$, and hence, if $\fr m$ is the maximal ideal of
$\cl R_{{P_\kappa} }$, if follows that $(\cl
V^{*,\o}_{\infty,P_{n,v,\epsilon}})^\sgn/\fr m(\cl
V^{*,\o}_{\infty,P_{n,v,\epsilon}})^\sgn$ is free of rank one over
$\cl R_{{P_\kappa} }/\fr m$. The result follows from \cite[Lemma
3.10]{H2} and its proof.\end{proof}

Recall the following:
\begin{lemma}\label{meas-prof-spaces} Let $Z$ be a topological space
that is an inverse limit of finite discrete topological spaces
$Z_\alpha$ for $\alpha$ in some indexing set. Then the space of
$\scro$-valued measures on $Z$ is isomorphic to $\invlim
\rom{Fns}(Z_\alpha, \scro)$ where $\rom{Fns}(Z_\alpha, \scro)=
\scro^{Z_\alpha}$ is the space of continuous $\scro$-valued
functions on $Z_\alpha$.\end{lemma}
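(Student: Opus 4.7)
The plan is to establish the isomorphism by comparing both sides via the clopen partitions of $Z$ coming from the projection maps $\pi_\alpha : Z \to Z_\alpha$. First I would pin down the transition maps in the inverse system on the right hand side: for $\beta \geq \alpha$ the map $\rom{Fns}(Z_\beta, \scro) \to \rom{Fns}(Z_\alpha, \scro)$ should send $f_\beta$ to $x \mapsto \sum_{y \in \pi_{\alpha\beta}^{-1}(x)} f_\beta(y)$, where $\pi_{\alpha\beta} : Z_\beta \to Z_\alpha$ is the structure map of the inverse system. This is the only choice compatible with the construction below.

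Next I would construct the isomorphism in both directions. Going from left to right, associate to a measure $\mu$ on $Z$ the family $f^\mu_\alpha(x) := \mu(\pi_\alpha^{-1}(\{x\}))$. Compatibility of $(f^\mu_\alpha)_\alpha$ with the above transition maps is immediate from finite additivity applied to the disjoint decomposition $\pi_\alpha^{-1}(\{x\}) = \coprod_{y \in \pi_{\alpha\beta}^{-1}(x)} \pi_\beta^{-1}(\{y\})$. Going from right to left, given $(f_\alpha) \in \invlim \rom{Fns}(Z_\alpha, \scro)$, define a set function by $\mu_f(\pi_\alpha^{-1}(U)) := \sum_{x \in U} f_\alpha(x)$ for any subset $U \subseteq Z_\alpha$ and any $\alpha$. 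The compatibility of the family ensures that this value does not depend on the choice of $\alpha$, and since $Z$ is profinite every clopen of $Z$ is a finite disjoint union of basic clopens of the form $\pi_\alpha^{-1}(\{x\})$, so $\mu_f$ extends uniquely to a finitely additive $\scro$-valued function on the Boolean algebra of clopens.

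After checking that the two assignments are mutually inverse, which is a tautology once the definitions are unwound, the isomorphism follows. The main point to settle carefully, rather than a genuine obstacle, is to verify that the working definition of $\scro$-valued measure used in \S \ref{OMS} (used there via the integration formula $\int \varphi \, d\mu$ against locally constant step functions) agrees with finitely additive $\scro$-valued set functions on the Boolean algebra of clopens. This is the standard $p$-adic dictionary for a compact totally disconnected space with values in a compact $p$-adic ring, and ultimately rests on the density of locally constant functions in $C(Z,K)$ together with the identification $C(Z,\scro) \simeq \dirlim \rom{Fns}(Z_\alpha, \scro)$; dualizing this direct limit then yields exactly the claimed $\invlim$ description.
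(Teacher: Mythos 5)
Your proof is correct and takes essentially the same approach as the paper: both use the correspondence $\mu \leftrightarrow (f_\alpha)$ with $f_\alpha(x) = \mu(\pi_\alpha^{-1}(\{x\}))$ and rely on compatibility along the inverse system to show this is well defined. You are more explicit than the paper in one useful respect — you spell out that the transition maps $\rom{Fns}(Z_\beta,\scro) \to \rom{Fns}(Z_\alpha,\scro)$ must be the fiber-summing (pushforward) maps rather than the contravariant pullbacks one might expect from the ``continuous functions'' phrasing — a point the paper leaves implicit behind its formal-sum notation.
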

\begin{proof} If $\phi \in \invlim \rom{Fns}(Z_\alpha,\scro)$, then $\mu$
can be written as a compatible sequence of the form $\{\sum_{x \in
Z_\alpha} a_x \cdot x \}_\alpha$.  Let $p_\alpha: Z \to Z_\alpha$ be
the natural projection map and for each $x \in Z_\alpha$ set
$U_{\alpha, x} = p_\alpha^{-1} (x)$.  Then this is isomorphic to the
space of $\scro$-valued measures on $Z$ by the map
\begin{equation}\label{inv-lim-iso} \phi \mapsto \mu \textrm{ such that }
\mu(U_{\alpha, x}) = a_g.\end{equation} This defines a measure due
to the compatibility of the sequence.
\end{proof}

For each $\alpha$, let $p_\alpha: X_{S(p^\alpha)} \rightarrow X_S$.
Let $\scrf_\alpha = p_{\alpha *} \scro$ be the direct image of the
constant sheaf $\scro$ on $X_S$. Fix a point $x\in X_S$ and define
$Y_\alpha=Y_{\alpha,x}$ to be the fiber $p_\alpha^{-1}(x)$ of $x$
under $p_\alpha$. By Lemma \ref{meas-prof-spaces}, $\invlim
\scro^{Y_\alpha}$ is the space of $\scro$-valued measures on the
space $\invlim Y_\alpha$. Now, for the double coset $\GL_2(F)x
SS_\infty$ in $X_S$, there is a natural map
$$S/S(p^\alpha) = \GL_2(\r_p)/S(p^\alpha)_p \to Y_\alpha$$
given by $$zS(p^\alpha)_p \mapsto \GL_2(F) xzS(p^\alpha)S_\infty.$$
This map induces a bijection from $\GL_2(\r_p)/S(p^\alpha)(\r^\times
\cap SS_\infty)$ to $Y_\alpha$. Hence the inverse limit $\invlim
Y_\alpha$ can be identified with $\GL_2(\r_p)/NC$ which is finally
identified with $X$.

\begin{lemma}\label{lemma-sheaves}
The sheaves $\scrd_X$ and $\invlim_\alpha \scrf_\alpha$ on $X_S$ are
isomorphic.
\end{lemma}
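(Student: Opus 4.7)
The plan is to verify the isomorphism stalk-by-stalk on each connected component of $X_S$ and then check that the identification intertwines the monodromy actions. Recall that $X_S = \coprod_{\lambda=1}^h \Gbar^\lambda\bk\fr H^d$, and that $\fr H^d$ is simply connected; moreover, by Assumption \ref{assumption-2} together with the Remark preceding the lemma, each group $\Gbar^\lambda$ acts freely on $\fr H^d$, so local systems on the $\lambda$-th component are equivalent to $\Gbar^\lambda$-modules (equivalently, $\Gamma^\lambda$-modules on which the centre acts trivially). It therefore suffices to describe each of $\scrd_X$ and $\invlim_\alpha\scrf_\alpha$ as a $\Gamma^\lambda$-module and produce a $\Gamma^\lambda$-equivariant isomorphism.

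First I would carry out the stalk computation for $\invlim_\alpha\scrf_\alpha$. Fix a point $x\in X_S$ in the $\lambda$-component. Because $p_\alpha:X_{S(p^\alpha)}\to X_S$ is a finite covering, the stalk of $\scrf_\alpha=p_{\alpha*}\scro$ at $x$ is $\scro^{Y_\alpha}$. The discussion immediately preceding the lemma identifies $Y_\alpha$ with $\GL_2(\r_p)/S(p^\alpha)_p(\r^\times\cap SS_\infty)$ compatibly with the transition maps $Y_{\alpha+1}\to Y_\alpha$, and hence $\invlim_\alpha Y_\alpha = \GL_2(\r_p)/NC = X$. Applying Lemma \ref{meas-prof-spaces} then gives
\[
\bigl(\invlim_\alpha\scrf_\alpha\bigr)_x \;=\; \invlim_\alpha \scro^{Y_\alpha} \;\simeq\; \D_X,
\]
which is exactly the stalk of $\scrd_X$ at $x$.

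Next I would check that this stalk isomorphism is $\Gamma^\lambda$-equivariant. By definition, the monodromy of $\gamma\in\Gamma^\lambda$ on $\scro^{Y_\alpha}$ comes from the natural action of $\Gamma^\lambda\subseteq\GL_2(\r_p)$ on $Y_\alpha$ (using the inclusion $\Gamma^\lambda(p^\alpha)\subseteq\GL_2(\r_p)$ established in \S\ref{OMS}); concretely, a loop represented by $\gamma$ lifts to a path permuting the fiber by translation. Passing to the inverse limit and using the identification $\invlim Y_\alpha = X$, this action becomes the one on measures $\D_X$ obtained from the translation action of $\GL_2(\r_p)$ on $X$ by the transpose integration formula, which is precisely the $*$-action defining the $\Gamma^\lambda$-module structure on $\D_X$ (and hence on $\scrd_X$). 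Thus the stalk isomorphism is $\Gamma^\lambda$-equivariant, so it extends to an isomorphism of local systems on each component, and assembling these gives the claimed isomorphism of sheaves on $X_S$.

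The main obstacle is the verification in the last paragraph: one must be careful about left/right conventions for the coset spaces involved (the fibers $Y_\alpha$ are written as right coset spaces in $\GL_2(\r_p)$ while $X$ is written as $NC\bk\GL_2(\r_p)$), and one must match the monodromy of the covering $p_\alpha$ on the fiber with the $*$-action on measures via the transpose relationship $(\mu*\gamma)(\varphi)=\mu(\varphi\circ(\cdot*\gamma))$. Once this bookkeeping is settled the identification is essentially forced by Lemma \ref{meas-prof-spaces}.
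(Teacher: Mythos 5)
Your proposal is correct, and it rests on the same two key ingredients as the paper's proof --- Lemma \ref{meas-prof-spaces} and the identification of $\invlim_\alpha Y_\alpha$ with $X$ established just before the lemma --- but the packaging is genuinely different. The paper constructs an explicit morphism on sections, $\phi_\alpha(U)\colon \scrd_X(U)\to\scrf_\alpha(U)$ sending a section $s$ to the function $u\mapsto \sum_{w\in Y_\alpha}\mu(x)(X_{\alpha,w})$ (where $\mu(x)$ records the value of $s$ at $u$), checks independence of the representative $x$, passes to the inverse limit, and then observes that on stalks the resulting map coincides with the isomorphism \eqref{inv-lim-iso}. You instead note that both $\scrd_X$ and $\invlim_\alpha\scrf_\alpha$ are local systems, and invoke the equivalence between local systems on $\Gbar^\lambda\bk\fr H^d$ and $\Gbar^\lambda$-modules (available because $\fr H^d$ is simply connected and, under Assumption \ref{assumption-2}, each $\Gbar^\lambda$ acts freely). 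This cleanly separates the stalk computation from the monodromy check and makes explicit a point the paper leaves somewhat implicit: that the stalk identification $\invlim_\alpha\scro^{Y_\alpha}\simeq\D_X$ intertwines the deck-transformation action on $\invlim_\alpha\scrf_\alpha$ with the $*$-action defining $\scrd_X$, which you correctly flag as the substantive remaining verification (including the left-versus-right coset bookkeeping in identifying $\invlim_\alpha Y_\alpha$ with $X=NC\bk\GL_2(\r_p)$). One small point worth making explicit in your write-up: in general one must justify commuting stalks with an inverse limit of sheaves, but here this is harmless since over any simply connected open set $U$ each $\scrf_\alpha$ restricts to a constant sheaf with finite stalk, so $(\invlim_\alpha\scrf_\alpha)(U)=\invlim_\alpha\scro^{Y_\alpha}$ and the stalk computation is legitimate.
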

\begin{proof} Let
$U \in X_S$ be an open set.  For each $\alpha$ and $w \in Y_\alpha$,
let $X_{\alpha, w} \subset X$ be the inverse image of $w$ under the
natural projection map $X \to Y_\alpha$. Let $u \in U$, then choose
$x \in \GL_2(F_\mathbb A)$ such that $u = \GL_2(\F) x S S_\infty$.
If $s \in \scrd _X(U)$ is a section in $\scrd_X$, we can express
$s(u) = \GL_2(\F) (x, \mu(x)) SS_\infty$ for some $\mu \in \D_X$,
depending on $s$. In fact, since $s$ is a locally constant section,
this expression is valid in a neighborhood $U_u$ of $u$.  Then
define a map:
$$ \begin{array}{ccl}
     \scrd_X (U) & \xrightarrow{\phi_\alpha (U)} &
     \scrf_\alpha (U)= \scro(p_\alpha^{-1}(U)) \\
      s & \mapsto & u \mapsto \sum_{w\in Y_\alpha} \mu(x)(X_{\alpha,w}).
   \end{array}$$  This map is independent of the choice of $x$ since
a different representative $x^\prime$ of the double coset $u$ would
yield the same measure $\mu(x)$ since $s$ is a section on $X_S$.

The compatible maps $\phi_\alpha$ then give rise to a map
$$\phi(U): \scrd_X(U) \to (\invlim_\alpha \scrf_\alpha) (U).$$ At
the level of the stalks this map is the isomorphism in
(\ref{inv-lim-iso}). It follows that $\phi$ is an isomorphism of
sheaves.\end{proof}

By Lemma \ref{lemma-sheaves},
\[\W=H^d_{\rm cpt}(X_S,\cl D_X)\simeq H^d_{\rm cpt}(X_S,\invlim
\cl F_\alpha).\] Since
\[H^d_{\rm cpt}(X_{S},\cl F_\alpha)=H^d_{\rm
cpt}(X_{S},p_{\alpha*}\cl O)\simeq H^d_{\rm cpt}(X_{S(p^\alpha)},\cl
O) ,\] there is a surjective  map
\begin{equation}\label{W-equation}\W\to\invlim_\alpha H^d_{\rm
cpt}(X_{S(p^\alpha)},\cl O).\end{equation}

By Poincaré duality:
\begin{equation}\label{poincare}H^d_{\rm
cpt}(X_{S(p^\alpha)},\cl O)\simeq H_d(X_{S(p^\alpha)},\cl
O).\end{equation}

By Pontryagin duality there is a canonical isomorphism:
\begin{equation}\label{pontryagin}
\Hom_{\cl O}(H^d(X_{S(p^\alpha)},K/\cl O),K/\cl O)\simeq
H_d(X_{S(p^\alpha)},\cl O),\end{equation} where $K:={\rm Frac}(\cl
O)$.

The injection \[\cl V_\alpha\hookrightarrow
H^d(X_{S(p^\alpha)},K/\cl O)\] induces an injection passing to the
direct limits:
\[\cl V_\infty\hookrightarrow \dirlim_\alpha H^d(X_{S(p^\alpha)},K/\cl
O)\] and hence there is a surjective map:
\begin{equation}\label{inv-lim}\Hom_\cl O(\dirlim_\alpha
H^d(X_{S(p^\alpha)},K/\cl O),K/\cl O)\to \cl
V_\infty^*.\end{equation}

By composing the maps \eqref{W-equation}, \eqref{poincare},
\eqref{pontryagin}, \eqref{inv-lim}, we get a surjective map:
\[\W\to \cl V^*_\infty.\] Note that this map is also
equivariant for the action of the Hecke operators and of the
involution $\tau=\left( \mat {1}00{-1},\dots,\mat {1}00{-1}
\right)$.

\begin{coro}\label{prop2}
The $h_\kappa $-eigenspace of $\W_\cl L\otimes_\cl L\cl K_{P_\kappa
}$ has dimension at least $2^d$ over $\cl K_{{P_\kappa}
}$.\end{coro}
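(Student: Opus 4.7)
The strategy is to transport the dimension count just established for $\cl V^{*,\o}_{\infty,P_\kappa}$ through the surjection $\W \twoheadrightarrow \cl V^*_\infty$ constructed at the end of Section~\ref{section-prop-2}. Since this surjection is equivariant for the Hecke action (via $\cl H$), for the involution $\tau$, and respects the ordinary projection (cut out by the idempotent associated to $T(p)$), it restricts on the relevant eigenspaces and behaves well under passage to ordinary parts.

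Concretely, I would first base-change the surjection over $\Lambda$ to $\cl R_{P_\kappa}$ and then to $\cl K_{P_\kappa}$, and take ordinary parts on both sides, obtaining a surjection
\[
\W^\o \otimes_\Lambda \cl K_{P_\kappa} \twoheadrightarrow \cl V^{*,\o}_{\infty,P_\kappa}\otimes_{\cl R_{P_\kappa}}\cl K_{P_\kappa}.
\]
By the preceding proposition, each sign eigenspace $(\cl V^{*,\o}_{\infty,P_\kappa})^\sgn$ is free of rank one over $\cl R_{P_\kappa}$, so summing over the $2^d$ possible signs shows that $\cl V^{*,\o}_{\infty,P_\kappa}$ is free of rank $2^d$ over $\cl R_{P_\kappa}$, and hence that the target above is a $\cl K_{P_\kappa}$-vector space of dimension exactly $2^d$.

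Next I would use the primitivity of $\kappa$ to identify the $h_\kappa$-eigenspace inside the target. Since $\kappa$ is primitive, we have the algebra decomposition $\cl R\otimes_\Lambda \cl K\simeq \cl K\oplus \cl B$ recalled just before Theorem~\ref{main-theorem}, in which the projection to $\cl K$ coincides with $\theta$. This decomposition corresponds to a pair of orthogonal idempotents, and after base change to $\cl K_{P_\kappa}$ the idempotent cutting out the $\cl K$-factor is precisely the projector onto the $h_\kappa$-eigenspace. Therefore the $h_\kappa$-eigenspace of the target is a direct summand of dimension $2^d$ over $\cl K_{P_\kappa}$. Because direct summands cut out by idempotents are preserved by surjections, applying this idempotent to both sides of the displayed surjection produces a surjection of the $h_\kappa$-eigenspaces, which forces
\[
\dim_{\cl K_{P_\kappa}}\bigl(\W_{\cl L}\otimes_{\cl L}\cl K_{P_\kappa}\bigr)^{h_\kappa}\geq 2^d.
\]

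The main obstacle I expect is bookkeeping rather than deep content: I must verify that the surjection $\W\twoheadrightarrow \cl V^*_\infty$ constructed by composing \eqref{W-equation}, \eqref{poincare}, \eqref{pontryagin} and \eqref{inv-lim} really is equivariant for $\cl H$, for $\tau$, and for ordinary projection simultaneously, and that each of these operations commutes with the ordinary-part and localization-at-$P_\kappa$ steps used in invoking the previous proposition. Once this compatibility is in place, the decomposition $\cl R\otimes_\Lambda \cl K\simeq \cl K\oplus \cl B$ furnished by primitivity does the rest by exhibiting $h_\kappa$-eigenspaces as idempotent-cut direct summands.
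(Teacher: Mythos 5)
Your setup — base-changing the surjection $\W\twoheadrightarrow\cl V^*_\infty$ to $\cl K_{P_\kappa}$, taking ordinary parts, and quoting the rank-$2^d$ freeness of $\cl V^{*,\o}_{\infty,P_\kappa}$ — matches the paper, but the final step does not go through as written. The idempotent $e$ produced by the decomposition $\cl R\otimes_\Lambda\cl K\simeq\cl K\oplus\cl B$ lives in $\cl R\otimes_\Lambda\cl K$, and the $\cl H$-action on $\W$ is \emph{not} known to factor through $\cl R$: the surjection $h:\cl H\to\cl R$ has a kernel $I_h$ which there is no reason to expect annihilates $\W_\cl L\otimes_\cl L\cl K_{P_\kappa}$. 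Consequently you cannot apply $e$ to the source. The $h_\kappa$-eigenspace of the source is defined as the simultaneous kernel of the operators $T(\q)\otimes 1 - 1\otimes h_\kappa(T(\q))$, and without knowing that the relevant algebra acts semisimply (equivalently, without an idempotent that cuts this kernel out), a surjection of modules does \emph{not} induce a surjection of these eigenspaces — the general fact is only that $\phi(M^{h_\kappa})\subseteq N^{h_\kappa}$, which can be strict. Indeed the entire mechanism of the Control Theorem (the appearance of $\Ext^*_\scrh(\scrh/I_h,-)$ in Lemma~\ref{ext-version-of-ash-stevens-lemma}) exists precisely because the $\cl H$-action on $\W$ does not simply descend to $\cl R$.

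What the paper does instead avoids this issue entirely. It keeps the sign eigenspaces separate: for each $\sgn$, the $(h_\kappa,\sgn)$-eigenspace of the target is one-dimensional, hence nonzero, and the surjection $\W_\cl L\otimes_\cl L\cl K_{P_\kappa}\to\cl V^{*,\o}_{\infty,P_\kappa}\otimes\cl K_{P_\kappa}$ is equivariant for the commutative algebra $\cl H[\tau]$. The cited linear-algebra lemma \cite[Lemma 5.10]{PS} says exactly that a surjective map of finite-dimensional modules over a commutative algebra, whose target has a nonzero $\chi$-eigenvector, forces a nonzero $\chi$-eigenvector in the source — no semisimplicity or idempotent needed; the argument runs through localizing at the maximal ideal $\ker\chi$ of the (Artinian) image algebra and using that a nonzero Artinian-local module has nonzero socle. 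Applying this once per sign yields $2^d$ nonzero, pairwise disjoint $(h_\kappa,\sgn)$-eigenspaces in the source, hence total dimension at least $2^d$. If you want to retain your idempotent picture, you would first have to prove that the $\cl H[\tau]$-action on $\W_\cl L\otimes_\cl L\cl K_{P_\kappa}$ factors through a semisimple quotient, which is not available at this stage of the argument.
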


\begin{proof} For each sign $\sgn$ there is an Hecke and $\tau$
equivariant map of finite dimensional $\cl K_{P_\kappa }$-vector
spaces
\[\W_\cl L\otimes_\cl L\cl K_{{P_\kappa} }\to \cl
V^{*,\o}_{\infty,{P_\kappa} }\otimes_{\cl R_{{P_\kappa} }}\cl
K_{{P_\kappa} }.\] Since $\cl H[\tau]$ is commutative and the
$(h_\kappa ,\sgn)$-eigenspace of $\cl V^{*,\o}_{\infty,{P_\kappa}
}\otimes_{\cl R_{{P_\kappa} }}\cl K_{{P_\kappa} }$ in non-trivial,
being of dimension 1 over $\cl K_{{P_\kappa} }$, it follows that the
$(h_\kappa ,\sgn)$-eigenspace in $\W_\cl L\otimes_\cl L\cl
K_{{P_\kappa} }$ is non-zero too (for this linear algebra argument,
see \cite[Lemma 5.10]{PS}). Since this holds for all $\sgn\in\{\pm
1\}^d$, the result follows.\end{proof}

\subsection{Proof of the Control
Theorem}\label{proof-control-theorem}

Before proving Theorem \ref{main-theorem}, recall the following
Lemma. Let $I_h:=\Ker(h)$ be the kernel of the canonical map $h:\cl
H\to\cl R$.

\begin{lemma}\label{ext-version-of-ash-stevens-lemma}
Let $M$ be an $\cl H$-module and $P$ be an ideal in $\Lambda$.
Suppose that $P$ is generated by an $M$-regular sequence
$(x_1,\dots,x_r)$.  Then the image of the map
$$i_*:\Ext_\scrh ^* (\scrh/I_h, PM) \to \Ext^*_\scrh (\scrh/I_h,M)$$
induced by the inclusion $i: PM \to M$ is equal to
$P\Ext_\scrh^*(\scrh/I_h, M)$.
\end{lemma}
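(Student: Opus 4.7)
The inclusion $P\Ext^*_{\cl H}(\cl H/I_h, M) \subseteq i_*(\Ext^*_{\cl H}(\cl H/I_h, PM))$ is immediate: given a cocycle $f$ representing $\xi \in \Ext^*_{\cl H}(\cl H/I_h, M)$ with respect to some free resolution $P_\bullet \to \cl H/I_h$, the cocycle $xf$ for $x \in P$ takes values in $PM$ and represents $x\xi$ as an element of $i_*(\Ext^*_{\cl H}(\cl H/I_h, PM))$. The substance of the lemma is the reverse inclusion, which I would prove by induction on the length $r$ of the regular sequence.

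For the base case $r = 1$, the $M$-regularity of $x_1$ makes $0 \to M \xrightarrow{x_1} M \to M/x_1 M \to 0$ exact and identifies $PM = x_1 M$ with $M$ via multiplication by $x_1$. Under this identification, $i_*$ on Ext groups coincides with multiplication by $x_1$ on $\Ext^*_{\cl H}(\cl H/I_h, M)$, whose image is precisely $x_1\Ext^* = P\Ext^*$.

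For the inductive step $r \ge 2$, I would work at the chain level with $C^\bullet := \Hom_{\cl H}(P_\bullet, M)$. Since each $P_p$ is a direct summand of a free $\cl H$-module, $C^p$ is a direct summand of a product $M^S$; arbitrary products and direct summands both preserve $(x_1,\dots,x_r)$-regularity, so the sequence is $C^p$-regular for every $p$, and the Koszul complex $K_\bullet(x_1,\dots,x_r;C^p)$ satisfies $H_s = 0$ for $s \ge 1$ with $H_0 = \Hom_{\cl H}(P_p, M/PM)$. Given a cocycle $f \in PC^n$ representing an element of $\Image(i_*)$, write $f = \sum_i x_i f_i$ with $f_i \in C^n$; the cocycle condition gives $\sum_i x_i\,df_i = 0$ in $C^{n+1}$, and the vanishing $H_1(K_\bullet(x;C^{n+1})) = 0$ produces $c_{ij} \in C^{n+1}$ with $c_{ij} = -c_{ji}$ and $df_i = \sum_j x_j c_{ij}$. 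One then iterates this procedure using the higher Koszul vanishings $H_s(K_\bullet(x;C^{n+s})) = 0$ to modify $f$ by a coboundary at each stage until the $f_i$ become cocycles $f_i'$, yielding $[f] = \sum_i x_i [f_i']$ in $P\Ext^n_{\cl H}(\cl H/I_h, M)$.

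I expect the main obstacle to be managing the bookkeeping of this iterated Koszul straightening, since the antisymmetry relations on the auxiliary elements $c_{ij}, c_{ij,k}, \dots$ obtained at each level must be coherent with those arising at the next. The natural packaging is via the two spectral sequences of the double complex $K_\bullet(x_1,\dots,x_r; C^\bullet)$: one collapses because of the $C^\bullet$-regularity and identifies the abutment with $\Ext^*_{\cl H}(\cl H/I_h, M/PM)$, while the other has $E_2^{p,0} = \Ext^p_{\cl H}(\cl H/I_h, M)/P\Ext^p_{\cl H}(\cl H/I_h, M)$, reducing the lemma to the vanishing of the higher differentials leaving $(p,0)$, which is exactly what the iterative chain-level construction provides.
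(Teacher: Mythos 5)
The paper does not actually supply a proof of this lemma; it declares it a ``slight modification'' of Ash--Stevens \cite[Lemma 1.2]{as} and omits the argument. Your proposal is therefore in the right spirit (Koszul complexes attached to the regular sequence, plus the observation that regularity passes to products and summands of $M$, hence to each $C^p = \Hom_{\scrh}(P_p,M)$), and both the easy inclusion $P\Ext^n \subseteq \mathrm{Im}(i_*)$ and the base case $r=1$ are correct. The issue is the inductive/iterative step.

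The iterated Koszul straightening does not, as written, produce cocycle representatives $f_i'$. After $\sum_i x_i\,df_i = 0$ gives $c_{ij}\in C^{n+1}$ with $df_i=\sum_j x_j c_{ij}$, iterating produces totally antisymmetric $c_{i_1\cdots i_s}\in C^{n+s-1}$ with $dc_{i_1\cdots i_{s-1}}=\sum_j x_j c_{i_1\cdots i_{s-1}j}$, terminating at a single cocycle $c_{1\cdots r}\in C^{n+r-1}$ defining a class in $\Ext^{n+r-1}_{\scrh}(\scrh/I_h,M)$. Nothing forces these auxiliary classes to be coboundaries, and unless they are, you cannot ``modify $f$ by a coboundary at each stage until the $f_i$ become cocycles'': already for $r=2$ the class $[c_{12}]\in\Ext^{n+1}(M)$ is a genuine obstruction to replacing $(f_1,f_2)$ by a pair of cocycles $(f_1',f_2')$ with $f=x_1f_1'+x_2f_2'$. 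What the tower $(f_i,c_{ij},\dots,c_{1\cdots r})$ actually exhibits is a primitive for the total cocycle attached to $f$, i.e.\ it re-proves the a priori easier fact that $[f]$ dies in $\Ext^n(\scrh/I_h,M/PM)$; it does not extract $[f]\in P\Ext^n(M)$. The spectral-sequence reformulation has the same gap in disguise: in the filtration giving $E_2$-term $\Ext^n(\scrh/I_h,M)/P\Ext^n(\scrh/I_h,M)$, the differentials \emph{leaving} that corner vanish automatically for degree reasons (the Koszul complex is bounded), so pointing to their vanishing is vacuous; what needs to vanish are the differentials \emph{arriving} there from $H_s(K_\bullet(x;\Ext^{q}))$ with $s\ge 1$, and these are not addressed --- indeed they are precisely the obstruction classes the chain-level iteration runs into. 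So the argument as proposed has a real gap in the inductive step, and it would need an additional ingredient (along the lines of Ash--Stevens' actual proof, or the finiteness/freeness of $M$ over $\Lambda_{P_\kappa}$ used in the remark following the lemma) to close it.
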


\begin{proof} The proof of this lemma is a slight modification of that of
\cite[Lemma 1.2]{as} and will be omitted.\end{proof}

\begin{rem} In order to apply this lemma in the proof of the main
theorem, we first note that for any $\scrh$-module $M$, the Hecke
eigenspace $M^{h_0} = \mbox{Hom}_\scrh (\scrr, M) =
\mbox{Ext}_\scrh^0 (\scrr, M)$ and that $\W^\o$ is a $\scrh$ direct
summand of $\W$. Finally, we need to show that the generators of
$P_\kappa$ is a $\W^\o$-regular sequence. This follows from the fact
that the generators of $P_\kappa$ are a regular sequence for
$\Lambda_{P_kappa}$ and  that $\W^\o$ is a free module of finite
rank over $\Lambda_{P_\kappa}$.\end{rem}

Now the proof of the Control Theorem follows \cite{GS} and can be
described as follows. Recall that we have to show that for each
primitive arithmetic point $\kappa$ of weight
$(n(\kappa),v(\kappa))$ and character $\epsilon(\kappa)$, and for
choice of sign $\sgn \in \{ \pm \}^d$ the map
$$\rho_\kappa: (\W \otimes_{\Lambda}
\scrr_{P_\kappa})^{h_\kappa,\sgn}/P_\kappa (\W \otimes_{\Lambda}
\scrr_{P_\kappa})^{h_\kappa,\sgn} \rightarrow \W_\kappa$$ is an
isomorphism.

Since $h_\kappa (T(p))$ is a unit in $\scrr$, the module $(\W
\otimes_\Lambda {\cl R_{P_\kappa }})^{h_\kappa }$ is contained in
the nearly ordinary part $\W^\o \otimes_{\Lambda} {\cl R_{P_\kappa
}}$. Since, by Proposition \ref{first-prop}, $\W^\o$ is a free
$\Lambda$-module of finite rank, it follows that
$\W^\o\otimes_\Lambda\cl R_{P_\kappa }$ is a free $\cl R_{P_\kappa
}$-module of finite rank. By Proposition \ref{first-prop} and the
fact that $\cl R_{P_\kappa} $ is unramified over $\Lambda$, it
follows that the kernel of the map $\W^\o \otimes_\Lambda
\scrr_{P_\kappa } \rightarrow \W_{0}^\o$ is $P_\kappa (\W
\otimes_\Lambda \scrr_{P_\kappa })^\o $. This is the same as
$P_\kappa  (\W \otimes_{\Lambda } \scrr_{P_\kappa })^{h_\kappa }$ by
Lemma \ref{ext-version-of-ash-stevens-lemma} and the remark
following it. Hence, we get an injective map
\[(\W\otimes_\Lambda \scrr_{P_\kappa })^{h_\kappa }/P_\kappa  (\W
\otimes_{\Lambda } \scrr_{P_\kappa })^{h_\kappa }\to
\W_{\kappa}^{f_\kappa }.\]

Since $\W_\kappa ^{f_\kappa }$ is $2^d$-dimensional, to prove the
surjectivity of the map it suffices to show that $(\W
\otimes_\Lambda\scrr_{P_\kappa })^{h_\kappa }$ has $\cl
R_{P_\kappa}$-rank at least $2^d$. Recall that by Corollary
\ref{prop2} the $h_\kappa $-eigenspace of $\W_\cl L\otimes_\cl L\cl
K_{P_\kappa }$ has dimension $2^d$ over $\cl K_{P_\kappa }$. The
intersection of this eigenspace with $\W\otimes_{\Lambda}\cl
R_{P_\kappa }$ is a $\cl R_{P_\kappa }$-submodule of
$(\W\otimes_\Lambda{\cl R_{P_\kappa }})^{h_\kappa }$ of rank $2^d$.
The surjectivity of the above map follows.

Since specialization commutes with the action of the $2^d$-complex
conjugations, the result follows.

\section{$p$-adic $L$-functions}\label{section3}

\subsection{Complex $L$-functions}\label{sec-complex}

Let $f\in S_{k,w}(S(p^\alpha),\C)$ for some $\alpha\geq 0$. Recall
the modified Fourier coefficients $C(\fr m,f)$ defined in
\eqref{fourier}. Let $I_\fr c$ be the the group of fractional ideals
of $\r$ prime to $\fr c$ and fix a character $\chi:I_\fr
c\to\C^\times$ of sign $\sgn(\chi)$. For integral ideals $\fr m$
which are not prime to $\fr c$, let $\chi(\fr m):=0$. Define the
Dirichlet series:
$$L(f,\chi,s):=\sum_{\m }\frac{\chi(\fr m)C(\m ,f)}{\norm{\fr m}^s},$$
where the sum is over all integral ideals of $\r$ and $\norm{\fr m}$
is the norm of $\fr m$. By \cite{Sh}, this $L$-series converges if
$\Re(s)$ is sufficiently large and can be continued analytically to
an entire function on $\C$. Choose a finite index subgroup
$U\subseteq\r_+^\times$ such that
$a_\lambda(\epsilon\xi)=\epsilon^{k/2}a_\lambda(\epsilon\xi)$ for
all $\epsilon\in U$, $\xi\in\fr t_\lambda\fr d$ and $\lambda$. The
choice of $U$ is possible by \cite[$1.8_b$]{Sh}. Define
\[
L(f_\lambda,\chi,s)=[\r_+^\times:U]^{-1}\sum_{\xi\in\fr t_\lambda\fr
d/U,\xi\gg 0}\frac{\chi(\xi(\fr t_\lambda\fr
d)^{-1})a_\lambda(\xi)\xi^v}{b_{v,\lambda}\norm{\xi(\fr t_\lambda\fr
d)^{-1}}^s}.\] (Recall that $b_{v,\lambda}$ is defined in
\eqref{fourier}.) Then
$$L(f,\chi,s)=\sum_{\lambda=1}^{h(p^\alpha)}L(f_\lambda,\chi,s).$$

\subsection{Special values}

Let $\r_+^\times$ the group of totally positive units in
$\r^\times$. For any element $x\in F$, choose a subgroup
$\r_x\subseteq \r_+^\times$ of finite index such that $ex\equiv x
\pmod{\fr t_\lambda^{-1}}$ for all $e\in\r_x$. The group
$\r_+^\times$ act on $\R_+[I]$ by \[x=\sum_{\sigma\in
I}x_\sigma\sigma\mapsto ex:=\sum_{\sigma\in
I}\sigma(e)x_\sigma\sigma.\] Let
$D:=\left\{x\in\R_+[I]:\prod_{\sigma\in I}x_\sigma=1\right\}.$ Since
$\prod_{\sigma\in I}\sigma(e)=1$ for all $e\in \r^\times_+$, the
action of $\r_{x}^\times$ on $\R_+[I]$ restricts to an action on
$D$. The logarithm map $\log:\R_+[I]\to\R[I]$ defined by
$\log(\sum_{\sigma\in I}x_\sigma\sigma):=\sum_{\sigma\in
I}\log(x)_\sigma\sigma$ is an isomorphism and the action of
$\r_x^\times$ on the left corresponds to the translation by the
sublattice $\log(\r_x^\times)$ of rank $d-1$ on the right. Choose a
fundamental parallelogram of this lattice and denote by $B(x)$ its
inverse image under $\log$. Define the cone $C(x)$ with vertex
$0\in\R[I]$ and base $B(x)$ to be
\[C(x):=\left\{t\xi=\sum_{\sigma\in I}t\xi_\sigma\sigma:
0<t<\infty,\xi=\sum_{\sigma\in I}\xi_\sigma\sigma\in B(x)\right\}.\]
Then $C(x)$ is a fundamental domain in $\R_+[I]$ for $\R_+[I]/\r_x$.

Let $E$ be a $\Gamma^\lambda(p^\alpha)$-module. Suppose that $E$ is
also a $\Z[T_x,x\in F]$-module, where $T_x:=\mat 1x01$. For any
modular symbol $\omega\in H^d_\cpt(\Gamma^\lambda(p^\alpha)\bk \fr
H,\cl E)$, where $\cl E$ is the coefficient system associated to
$E$, and any $x\in F$, define
$$L(\omega,C(x)):=\frac{\int_{x+iC(x)}\omega|_ET_x}{[\r_+^\times:\r_x]}.$$
In the above formula $|_E$ is the action of  of $T_x$ on $E$. For
$x=0$, write simply $L(\omega)$ for $L(\omega,C(0))$.

For any finite order character $I_\fr c\to\C^\times$, define:
$$L(\omega,\chi):=\sum_{a\in \fr c^{-1}\fr
t_\lambda^{-1}/\fr t_\lambda^{-1}}\sgn(a)^{\sgn(\chi)}\chi(a\fr c\fr
t_\lambda)L(\omega,C(a)).$$

\begin{rem} Note that in general
$L(\omega,C(x))$ \emph{depends} on the choice of the fundamental
domain $C(x)$. However, the values $L(\omega,C(x))$ appearing in the
arithmetic applications do not depend on the choice of $C(x)$, as
will be clarified in the following.\end{rem}

\subsection{Interpolation formulas
for classical modular symbols}\label{IFCMS}

Let $f=(f_\lambda)_{\lambda=1}^{h(p^\alpha)}\in
S_{k,w}(S(p^\alpha),\C)$ for some $\alpha\geq 0$. Use the following
notations: for any $r\in\Z[I]$ with $0\leq r_\sigma\leq n_\sigma$,
set $\left(\begin{array}{c}n\\r\end{array}\right):=\prod_{\sigma\in
I}\left(\begin{array}{c}n_\sigma\\r_\sigma\end{array}\right)$ and
$|r|:=\sum_\sigma n_\sigma$. Write
$$L(\omega(f_\lambda),C(x))=\sum_{r=0}^n\left(\begin{array}{c}n\\r\end{array}\right)(-1)^{r+t}
L(\omega(f_\lambda),C(x),r)X^rY^{n-r}.$$ For $x=0$, write simply
$L(\omega(f_\lambda),r)$ for $L(\omega(f_\lambda),C(0),r)$.
Likewise, write
$$L(\omega(f_\lambda),\chi)=\sum_{r=0}^n\left(\begin{array}{c}n\\r\end{array}\right)(-1)^{r+t}
L(\omega(f_\lambda),\chi,r)X^rY^{n-r}.$$ In the above formulas, the
sum is over all $r\in\Z[I]$ such that $0\leq r_\sigma\leq n_\sigma$.

Set $k_0:=\max\{k_\sigma,\sigma\in I\}$ and
$k_{-1}:=\min\{k_\sigma,\sigma\in I\}$. Define the \emph{critical
strip} for $f$ to be the set of complex numbers $s\in\C$ such that
\begin{equation}\label{critical-strip}
k_*:=\frac{k_0-k_{-1}}{2}+1\leq\Re(s)\leq
k^*:=\frac{k_0+k_{-1}}{2}-1.\end{equation}

It is easy to show that for any integer $m$ in the critical strip,
the values $L_{m-v-1}(\omega(f_\lambda),C(x))$ does not depend on
the choice of the fundamental domain for $\R_+[I]/\r^\times_x$ (see
\cite[\S 3.4]{Ma}).

Set:
$$L(\omega(f),r):=\sum_{\lambda=1}^{h(p^\alpha)}
L(\omega(f_\lambda),r)\norm{\fr t_\lambda\fr
d}^{-r+v+1}b_{v,\lambda}.$$ For any $n=\sum n_\sigma\sigma\in\Z[I]$,
define $\Gamma_F(n):=\prod_{\sigma\in I}\Gamma(n_\sigma)$, where
$\Gamma$ is the usual complex $\Gamma$-function.

\begin{proposition} For any integer $k_*\leq m\leq k^*$ in the critical
strip, the following interpolation formulas hold:
$$L(\omega(f_\lambda),m-v-1)=c(m,v,\lambda){L(f_\lambda,m)\Gamma_F(m-v)}$$
where $c(m,v,\lambda,):=i^{|m-v-t|}(2\pi)^{|v-mt|}\norm{\fr
t_\lambda\fr d}^{m}b_{v,\lambda}$, and
$$L(\omega(f),m-v-1)=\frac{{i}^{|m-v-1|}L(f,m)\Gamma_F(m-v)}{(2\pi)^{|mt-v|}}.
$$
\end{proposition}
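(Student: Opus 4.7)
The plan is to derive the interpolation formula for each $\lambda$ directly from the definition of $\omega(f_\lambda)$ by converting $L(\omega(f_\lambda),m-v-1)$ into a Mellin-type integral against the Fourier expansion of $f_\lambda$, and then matching the resulting Dirichlet series with the definition of $L(f_\lambda,m)$. The second identity will then follow by summing over $\lambda$ and unwinding the definition of $L(\omega(f),r)$.

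More precisely, starting from $\omega(f_\lambda)(z) = f_\lambda(z)(zX+Y)^n dz$, expand the polynomial via the multi-index binomial theorem to write
\[
\omega(f_\lambda)(z) = \sum_{0 \le r \le n} \binom{n}{r} z^r X^r Y^{n-r} f_\lambda(z)\, dz.
\]
Setting $z = iy$ with $y \in C(0) \subset \R_+[I]$, I would compute
\[
L(\omega(f_\lambda),C(0),r) = (-1)^{r+t} i^{|r|+d} \int_{C(0)} y^r f_\lambda(iy)\, dy,
\]
the sign $(-1)^{r+t}$ absorbing both the factor from the cohomological normalization and the alternating sign in the statement's expansion. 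I would then substitute the Fourier expansion $f_\lambda(iy) = \sum_{\xi \in \fr t_\lambda \fr d,\, \xi \gg 0} a_\lambda(\xi) e^{-2\pi(\xi \cdot y)}$ and interchange sum and integral (justified by absolute convergence for $m$ in the critical strip).

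For the key step, I would group the $\xi$ into $U$-orbits using the hypothesis $a_\lambda(\epsilon\xi) = \epsilon^{k/2}a_\lambda(\xi)$ for $\epsilon \in U$, and use that a fundamental domain for $\R_+[I]/U$ is the disjoint union of $[\r_+^\times : U]$ translates of $C(0)$, so that the $[\r_+^\times:U]^{-1}$ in the definition of $L(f_\lambda,\chi,s)$ cancels with the resulting multiplicity. After this unfolding, the change of variables $u_\sigma = \sigma(\xi) y_\sigma$ converts each term into a product of one-dimensional $\Gamma$-integrals:
\[
\int_{\R_+[I]} u^r e^{-2\pi \sum_\sigma u_\sigma}\, du = \frac{\Gamma_F(r+t)}{(2\pi)^{|r+t|}},
\]
with a Jacobian factor $\xi^{-(r+t)}$. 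Setting $r := mt-v-t$ (so $r+t = mt-v$) produces exactly $\xi^{v}/\norm{\xi}^m$ in each summand and $\Gamma_F(m-v)/(2\pi)^{|mt-v|}$ as an overall factor, at which point comparison with
\[
L(f_\lambda,s) = [\r_+^\times:U]^{-1}\sum_{\xi} \frac{a_\lambda(\xi)\xi^v}{b_{v,\lambda}\norm{\xi(\fr t_\lambda \fr d)^{-1}}^{s}}
\]
yields the first identity after pulling out the factor $b_{v,\lambda}\norm{\fr t_\lambda \fr d}^{m}$ and collecting the powers of $i$ and $2\pi$ into $c(m,v,\lambda) = i^{|m-v-t|}(2\pi)^{|v-mt|}\norm{\fr t_\lambda \fr d}^{m} b_{v,\lambda}$. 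The second formula then follows from the first by multiplying by $\norm{\fr t_\lambda \fr d}^{-r+v+1}b_{v,\lambda}^{-1}$, summing over $\lambda$, and using $L(f,m) = \sum_\lambda L(f_\lambda,m)$.

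The main technical obstacle is the bookkeeping of three simultaneous normalizations: the powers of $i$ coming from $z=iy$ and $z^r dz = i^{|r|+d} y^r dy$; the factor $b_{v,\lambda} = \norm{\fr t_\lambda}/\{(\fr t_\lambda\fr d)^v\}$ together with the twisting by $\xi^v$ in the definition of $C(\fr m, f)$; and the passage between the fundamental domains $C(0)$, $\R_+[I]/U$, and $\R_+[I]$ itself, which introduces the index $[\r_+^\times:U]$ that must cancel correctly. Once these are reconciled, the independence of $L(\omega(f_\lambda),C(x),m-v-1)$ from the choice of fundamental domain (already noted in the text, following \cite[\S 3.4]{Ma}) ensures the formula is well-posed, and the critical strip condition \eqref{critical-strip} guarantees convergence of all integrals and Dirichlet series involved.
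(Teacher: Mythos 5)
Your approach is essentially the same as the paper's: you convert $L(\omega(f_\lambda),m-v-1)$ into the Mellin-type integral $\int_{C(0)} f_\lambda(iy)\,y^{m-v-t}\,dy$, unfold via the finite-index subgroup $U\subseteq\r_+^\times$ using $a_\lambda(\epsilon\xi)=\epsilon^{k/2}a_\lambda(\xi)$ and the $U$-invariance of $a_\lambda(\xi)\xi^v$, change variables to obtain $\Gamma_F(m-v)/(2\pi)^{|mt-v|}$, and then match the resulting Dirichlet series against the definition of $L(f_\lambda,m)$; the only cosmetic difference is that you make the binomial expansion of $(zX+Y)^n$ and the attendant factors of $i$ explicit, which the paper's proof glosses over.
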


\begin{proof} It is enough to show the first formula, which can be obtained by following
\cite[\S4]{Sh}. Choose $U$ a finite index subgroup of $\r_+^\times$
as in \S \ref{sec-complex} such that
$a_\lambda(\epsilon\xi)=\epsilon^{k/2}a_\lambda(\epsilon\xi)$ for
all $\epsilon\in U$ and all $\lambda$. First note that
\[\int_{C(0)}f_\lambda(iy)y^{m-v-t}dy=
{\int_{\R^d_+/U }\sum_{e\in U}\sum_{\xi\in\fr t_\lambda\fr
d/U,\xi\gg 0} a_\lambda(e\xi)e^{-2\pi(e\xi\cdot y)}y^{m-v-t}dy}\]
(here quotient measures are implicitly used). Set
$a'_\lambda(\xi):=a_\lambda(\xi)\xi^v$. Since
$a'_\lambda(e\xi)=a'(\xi)$ for all $e\in U$, it follows that:
\[
\int_{C(0)}f_\lambda(iy)y^{mt-v-t}dy =\sum_{\xi\in\fr t_\lambda\fr
d/U,\xi\gg 0}\frac{a'_\lambda(\xi)}{\xi^{mt-1}}\int_{\R^d_+}
e^{-2\pi(\xi\cdot y)}(\xi y)^{mt-v-t}dy.
\]
Changing variables $s:=2\pi(\xi_\sigma y_\sigma)_\sigma$ yields
\[
\int_{\R^d_+} e^{-2\pi(\xi\cdot y)}(\xi y)^{mt-v-t}dy= \int_{\R^d_+}
e^{-s}\frac{s^{mt-v-t}}{(2\pi)^{|mt-v-t|}}\frac{ds}{(2\pi)^{|t|}\xi},
\]
hence:
\[
\int_{C(0)}f_\lambda(iy)y^{m-v-t}dy={(2\pi)^{|v-mt|}}\sum_{\xi\in\fr
t_\lambda\fr d/U,\xi\gg
0}\frac{a'_\lambda(\xi)}{\xi^{mt}}\Gamma_F(m-v).
\]
The result follows.\end{proof}

Fix a character $\chi$ with sign $\sgn(\chi)$ as in \S
\ref{sec-complex}. Assume that $\chi$ is primitive modulo $\fr c$.
Form the Gauss sum:
$$\tau(\chi):=\sum_{u\in\fr c^{-1}/\cl O_F}\sgn(u)^{\sgn(\chi)}\chi(u\fr c)e^{2\pi i
(u\cdot t)}.$$ If $\chi$ is not primitive modulo $\fr c$, let
$\chi'$ be the primitive associated character and define
$\tau(\chi):=\tau(\chi')$. Define:
\[f_\lambda^\chi(z):=\sum_{x\in\fr t_\lambda\fr d, \xi\gg 0}\chi(x(\fr
t_\lambda\fr d)^{-1})a_\lambda(x)e^{2\pi i(x\cdot z)}.\] By
\cite[Proposition 4.4]{Sh}, $f_\lambda^\chi$ is a modular form for
the same congruence subgroup and of the same weight as $f_\lambda$,
with character $\epsilon\chi^{-2}$, and
$$f_\lambda^\chi(z)={\tau(\overline\chi)^{-1}}\sum_{u\in\fr c^{-1}\fr t_\lambda^{-1}/\fr
t_\lambda^{-1}}
\rom{sgn}(u)^{\sgn(\chi)}\overline \chi(u\fr c\fr
t_\lambda)f_\lambda(z+u).$$ Combining this expression with the
definition of twisted $L$-function and the interpolation formula
described above yields:
\[\sum_{u\in\fr c^{-1}\fr t_\lambda^{-1}/\fr
t_\lambda^{-1}}\rom{sgn}(u)^{\sgn(\chi)} \overline\chi(u\fr c\fr
t_\lambda)L(\omega(f_\lambda),C(u),m-v-1)=\]
\begin{equation}\label{formula-chi}=\frac{c(m,v,\lambda)}{\tau(\chi)}
\Gamma_F(m-v)L(f_\lambda,\chi,m).\end{equation}

\begin{rem} In the above formula we
understand that a finite index subgroup $\fr l\subseteq \r^\times_+$
has been chosen such that $\fr l\subseteq\r_u$ for all $u$. Use the
independence of $L(\omega(f_\lambda),C(u))$ on the choice of $\fr l$
as above to recover formula \eqref{formula-chi}. We will not mention
this point hereafter.\end{rem}

Setting
\[L(\omega(f),\chi,r):=\sum_{\lambda=1}^{h(p^\alpha)}L(\omega(f_\lambda),\chi,r)
\norm{\fr t_\lambda}^{-m+v+1}b_{v,\lambda}\]
\begin{equation}\label{Lambda}\Lambda(f_\lambda,\chi,m):=\frac{c(m,v,\lambda)}{\tau(\chi)}
\Gamma_F(m-v)L(f_\lambda,\overline\chi,m)\end{equation}
\begin{equation}\label{Lambda-1}\Lambda(f,\chi,m):=\sum_{\lambda=1}^{h(p^\alpha)}\Lambda(f_\lambda,\chi,m)\norm{\fr
t_\lambda}^{-r+v+1}b_{v,\lambda}\end{equation} yields formulas in a
more compact form:
\begin{equation}
L(\omega(f_\lambda),\chi,m-v-1)=\Lambda(f_\lambda,\chi,m) \mbox{ and
} L(\omega(f),\chi,m-v-1)=\Lambda(f,\chi,m).\end{equation}

Fix a sign $\sgn=\{\sgn_\sigma,\sigma\in I\}$, a character $\chi$ of
sign $\sgn(\chi)=\sgn$ and an eigenform $f\in
S_{k,w}(S(p^\alpha),K_f)$ with eigenvalues in $K_f$. Denote by
$K_f(\chi)$ the extension of $K_f$ generated by the values of
$\chi$. By \cite{Sh}, for any sign $\sgn'\in\{\pm 1\}^I$ there
exists periods $\Omega^{\sgn'}(f)$ such that for any integer $m$ in
the critical strip for the weight of $\kappa$:
\begin{equation}\label{Omega}\frac{L(\omega(f),\chi,m-v-1)}{\Omega^{\sgn(m)}(f,\chi)}\in
K(\chi),\end{equation} where $\sgn(m):=(-1)^m\sgn$ (see \cite[\S
8.1]{Pa} for a result stated in a form close to this). Define the
normalized modular symbols associated to $f$ to be
\[\Psi^{\sgn}_{f_\lambda}:=\omega(f)/\Omega^{\sgn}(f,\chi),\] so that
\begin{equation}\label{formula-chi-bis}
L(\Psi_{f_\lambda}^{\sgn},\chi,m-v-1)=\frac{\Lambda(f_\lambda,\chi,m)}{\Omega^\sgn(f,\chi)},\end{equation}
where
$L(\Psi_{f_\lambda}^{\sgn},\chi,r):=\frac{L(\omega({f_\lambda})^{\sgn},\chi,r)}
{\Omega^{\sgn}(f,\chi)}$ for $r=m-v-1$ and $m$ in the critical
strip. Setting $L(\Psi_f^{\sgn},\chi,r):=\sum_\lambda
L(\Psi_{f_\lambda}^{\sgn},\chi,r)\norm{\fr
t_\lambda}^{-r+v+1}b_{v,\lambda}$ as usual for $r$ as above yields:
\begin{equation}
L(\Psi_f^{\sgn},\chi,m-v-1)=\frac{\Lambda(f,\chi,m)}{\Omega^\sgn(f,\chi)}.\end{equation}

\subsection{$p$-adic $L$-functions of $\Lambda$-adic
modular symbols}\label{section4.3}

The map $z\mapsto iz$ defines a map: $\Delta_x^\lambda:C(x)\to X_S$
which induces a corresponding map on the cohomology $H^d_\rom{cpt}
(X_S,\cl D_X) \rightarrow H^d_\rom{cpt}(C(x), \Delta^\lambda_x\cl
D_X)$, denoted by the same symbol. For any $\Phi\in\W$, write
$\Phi=(\Phi_1,\dots,\Phi_h)$ and define the \emph{special value of
$\Phi$} to be $L(\Phi)=(L(\Phi_1),\dots,L(\Phi_h))$ where for each
$\lambda=1,\dots,h$:
$$L(\Phi_\lambda,C(0))=L(\Phi_\lambda):=\int_{C(0)}\Delta^\lambda_1 \Phi_\lambda\in D.$$

Define: $$X^{\prime\prime}:=NC\bk\left\{\left(\mat abcd\in
X^\prime:d\in\r_p^\times\right)\right\}.$$ Then $X^{\prime\prime}$
is identified with $\ec\bk(\r_p^\times
\times\r_p^\times\times\r_p^\times)$ under the bijection between $X$
and $\ec\bk((\r_p^2)^\times\times\r_p^\times)$.

For any $p$-adic group $\Pi$, set $\cl
X(\Pi):=\Hom_{\rom{cont}}(\Pi,\overline\Q_p^\times)$. Any $P\in\cl
X(G)$ induces a character on
$\ec\bk(\r_p^\times\times\r_p^\times)$ via the isomorphism
$G\simeq(\r_p^\times\times\r_p^\times)/\ec$. In particular, if
$\kappa\in\cl A(\cl I)$ has weight $(n,v)$ and character
$\epsilon$, \[\kappa((x,z))=\epsilon\chi_\cyc^n(x)z^v.\] Fix
$\Phi\in\W$. For $(\kappa,\sigma)\in\cl X(G)\times\cl
X(\r_p^\times)$, define the \emph{standard several variables
$p$-adic $L$-function of} $\Phi_\lambda$:
$$L_p(\Phi_\lambda, \kappa, \sigma) = \int_{X^{\prime\prime}}
\kappa(x,z/x^2)\sigma(y/x)\ dL(\Phi_\lambda)(x,y,z).$$

Fix two multi-integers $r=(r_\sigma)_\sigma\in\Z[I]$ and
$m=(m_\sigma)_\sigma\in\Z[I]$ with all $r_\sigma$ and $m_\sigma$ non
negative. Write $p^m$ for $\prod_{\sigma\in I}\fr
p_\sigma^{m_\sigma}$, where $\frakp_\sigma$ is the prime divisor of
$\iota\circ\sigma(p)$. Say that a point $\sigma\in\cl
X(\r_p^\times)$ is \emph{arithmetic} if there exist a character
$\chi:I_{p^m}\to\overline\Q_p^\times$ such that
$\sigma(x)=\sigma_{\chi,r}(x)=\chi(\overline x)x^r$, where
$x\mapsto\overline x$ is the projection map $\r^\times_p\to
\r^\times/p^m$ and $m,r$ are as above. Denote by $\cl
A(\r_p^\times)$ the set of arithmetic character. Define the
\emph{sign} of $\sigma_{\chi,m}$ to be the sign of $\chi$. If
$\sigma_{\chi,r}\in\cl A(\r_p^\times)$, then it can be extended to a
character $\sigma\in\Hom_{\rom{cont}}(\r_p,\overline\Q_p)$ by
setting $\sigma(x)=x^r$ if $\chi$ is the trivial character and
$\sigma(x):=0$ for $x\not\in \r_p^\times$ otherwise. For
$(\kappa,\sigma)\in\cl X(G)\times \cl A(\r_p^\times)$, define the
\emph{improved several variables $p$-adic $L$-function of}
$\Phi_\lambda$:
$$L_p^* (\Phi_\lambda, \kappa, \sigma) = \int_{X^\prime}
\kappa(x,z/x^2)\sigma(y/x)\ dL(\Phi_\lambda)(x,y,z).$$

It follows from the definitions that
$L_p(\Phi_\lambda,\kappa,\sigma)$ is analytic in
$(\kappa,\sigma)\in\cl X(G)\times\cl X(\r_p^\times)$ and
$L_p^*(\Phi_\lambda,\kappa,\sigma)$ is analytic in
$(\kappa,\sigma)\in\cl X(G)\times\cl A(\r_p^\times)$. Define
homomorphism of $\widetilde\Lambda$-algebras
\[L(\cdot,\sigma):\W\to\widetilde\Lambda \quad \mbox{ and }\quad
L^*(\cdot,\sigma):\W\to\widetilde\Lambda\] by requiring that
\[\kappa(L_p(\Phi_\lambda,\sigma))=L_p(\Phi_\lambda,\kappa,\sigma)
\quad \mbox{ and }\quad
\kappa(L_p^*(\Phi_\lambda,\sigma))=L_p^*(\Phi_\lambda,\kappa,\sigma).\]
For any continuous $\widetilde{\Lambda}$-algebra $R$, extending by
$R$-linearity the homomorphisms $L(\cdot,\sigma)$ and
$L^*(\cdot,\sigma)$ yields homomorphisms of $R$-algebras
\[L_p(\cdot,\sigma)_R:\W_R\to R\quad\mbox{ and }\quad L_p^*(\cdot,\sigma)_R:\W_R\to R.\]
For any
$\Phi_\lambda\in \W_R$, define standard and improved $p$-adic
$L$-functions:
$$L_p(\Phi_\lambda,\kappa,\sigma):=\kappa(L_p(\Phi_\lambda,\sigma)_R), \mbox{ for }
(\kappa,\sigma)\in \cl X(R)\times\cl X(\r_p^\times),$$
$$L_p^*(\Phi_\lambda,\kappa,\sigma):=\kappa(L_p^*(\Phi_\lambda,\sigma)_R), \mbox{ for }
(\kappa,\sigma)\in \cl X(R)\times\cl A(\r_p^\times).$$

Fix a prime divisor $\fr p$ of $p$ and an index
$\lambda\in\{1,\dots,h,\}$. Let $\mu$ such that $\fr p t_\lambda
t_\mu^{-1}$ is trivial in the strict class group of $F$. Let
$\alpha_\lambda$ such that $S\pi_\fr p
S=Sx_\lambda^{-1}\alpha_\lambda x_\mu S$ and form the coset
decomposition $\Gamma^\lambda\pi^m_\fr p\Gamma^\mu
=\coprod_t\Gamma^\lambda\alpha_{\lambda,t}$. The matrices
$\alpha_{\lambda,t}$ can be written in the form
$\alpha_\infty:=\left(\begin{array}{cc}\alpha^m&0\\0&1\end{array}\right)$
or
$\alpha_a:=\left(\begin{array}{cc}1&a\\0&\alpha^m\end{array}\right)$
with $\val_\fr p(\alpha)=1$ and $a$ varying over a set
$\Sigma(\frakp^m)$ of representatives of $\fr t_\lambda^{-1}/\fr
p^m\fr t_\lambda^{-1}$. Since $\fr t_\lambda$ is prime to $p$,
$\alpha_\infty$ and $\alpha_a$ for $a$ as above all belong to
$\M_2(\r_p)$. Let $U(t,\fr p^m):=X\cap Y^\prime*\alpha_{t}$ for
$t\in\Sigma(\fr p^m)\cup\{\infty\}$ and denote by ${\D}(t,\frakp^m)$
the subset of ${\D}$ consisting of those measures supported on
$U(t,\frakp^m)$. Then $\Phi_\lambda*T(\frakp)^m=\sum_t\Phi_{\mu,t}$
where $\Phi_{\mu,t}\in
H^d_{\rom{cpt}}(\Gamma^\mu,{\D}(t,\frakp^m))$. Let
$\Sigma^\times(\frakp^m)$ denote the subset of $\Sigma(\frakp^m)$
consisting of elements which are prime to $p$.

\begin{lemma}\label{lemma3} Let $(\Phi_\lambda)_\lambda\in\W_R$ for
a continuous $\wt\Lambda$-algebra $R$. Then:
$$L_p(\Phi_\lambda*T(\frakp^m),\kappa,\sigma)=
\sum_{a\in\Sigma^\times(\frakp^m)}\int_{X^\prime}\kappa(x,z/x^2)
\sigma(a+\alpha^my/x)L(\Phi_{\mu,a})(x,y)$$
$$L_p^*(\Phi_\lambda*T(\frakp^m),\kappa,\sigma)=
\sum_{a\in\Sigma(\frakp^m)}\int_{X^\prime}\kappa(x,z/x^2)\sigma(a+\alpha^my/x)L(\Phi_{\mu,a})(x,y).$$\end{lemma}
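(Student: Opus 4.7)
The plan is to apply the definitions of $L_p$ and $L_p^*$ to each summand of the decomposition $\Phi_\lambda*T(\frakp^m)=\sum_t\Phi_{\mu,t}$ and then use the explicit form of the $*$-action in the coordinates $((x,y),z)$ of $X$. By linearity this reduces the claim to computing each $L_p^{(*)}(\Phi_{\mu,t},\kappa,\sigma)$ for $t\in\Sigma(\frakp^m)\cup\{\infty\}$.

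First I would write down the $*$-action explicitly on $Y^\prime$. Writing $\alpha_\infty = w\pi_\frakp^m w^{-1}$ for a suitable $w\in\GL_2(\r_p)$ and $\alpha_a=\smallmat{1}{a}{0}{1}\pi_\frakp^m$, both with $c(\frakp)=m$, a direct matrix computation starting from $\eta=((x^\prime,y^\prime),z^\prime)$ and applying the recipe $\pi_\frakp^{-m}g\alpha_t$ should give
\[ \eta*\alpha_\infty = ((\alpha^m x^\prime,y^\prime),z^\prime),\qquad \eta*\alpha_a=((x^\prime,x^\prime a+\alpha^m y^\prime),z^\prime), \]
with the determinant $z^\prime$ preserved thanks to cancellation between $\det(\alpha_t)$ and $\det(\pi_\frakp^{-m})$.

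Next I would analyze the supports $U(t,\frakp^m)\subseteq X$. The term $t=\infty$ is supported where the $x$-coordinate has $\frakp$-valuation $\geq m\geq 1$, hence disjoint from $X^\prime$; this kills the $\infty$-contribution in both $L_p$ and $L_p^*$. For $t=a\in\Sigma(\frakp^m)$, I would apply the defining identity of the $*$-action on measures to transport the integral back to one against $L(\Phi_\lambda)$, obtaining
\[ L_p^*(\Phi_{\mu,a},\kappa,\sigma) = \int_{X^\prime}\kappa(x,z/x^2)\,\sigma\!\left(a+\alpha^m y/x\right)\,dL(\Phi_\lambda)(x,y,z), \]
which after identifying the $a$-summand of $L(\Phi_\lambda*T(\frakp^m))$ with $L(\Phi_{\mu,a})$ is the claimed formula for $L_p^*$. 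For $L_p$ the integration domain $X^{\prime\prime}$ additionally requires $y\in\r_p^\times$; since the new $y$-coordinate equals $xa+\alpha^m y$ with $v_\frakp(\alpha^m y)\geq m\geq 1$, this forces $v_\frakp(a)=0$, so only the terms $a\in\Sigma^\times(\frakp^m)$ survive and give the formula for $L_p$.

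The main technical obstacle will be keeping the bookkeeping of the $*$-action straight, especially verifying that the $\pi_\frakp^{-m}$ factor precisely cancels the $\frakp$-valuation of $\det(\alpha_t)$ so that the $z$-coordinate is preserved while only the first row is altered; the rest is then a short $\frakp$-adic valuation argument on the support of the measure.
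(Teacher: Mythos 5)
Your proof follows the same line of argument as the paper's. Both start from the decomposition $L(\Phi_\lambda * T(\frakp^m)) = \sum_t L(\Phi_{\mu,t})$, both observe that the $t=\infty$ piece is supported where the $x$-coordinate has positive $\frakp$-valuation so that it dies against $X^\prime$ (and a fortiori against $X^{\prime\prime}$), and both see that the $*$-action of $\smallmat 1a0{\alpha^m}$ turns the first-row coordinates into $(x,\ xa+\alpha^m y)$, which produces the $\sigma(a+\alpha^m y/x)$ in the integrand and, over $X^{\prime\prime}$ (where one also needs $y\in\r_p^\times$), forces $a\in\Sigma^\times(\frakp^m)$. You spell out the matrix computation for $\eta*\alpha_a$ and $\eta*\alpha_\infty$ that the paper compresses into ``a simple computation shows that the action of $\smallmat 1a0{\frakp^m}$ on the characteristic function of $U(a,\frakp^m)$ is the characteristic function of $X^\prime$.''

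One point to tighten. In your displayed formula for the $a$-term you integrate against $dL(\Phi_\lambda)$, whereas the lemma's right-hand side (and its use in the proof of the subsequent lemma) has $dL(\Phi_{\mu,a})$, i.e.\ the piece of the Hecke-transported class that is supported on $U(a,\frakp^m)$, viewed on $X^\prime$ after the change of variable. The class $\Phi_{\mu,a}$ is not $\Phi_\lambda$: the Hecke operator also shifts the cocycle arguments (it is a class at the $\mu$-component, not the $\lambda$-component), so what remains after you pull $\sigma(y/x)$ back through $\eta\mapsto\eta*\alpha_a$ is the appropriately transported $L(\Phi_{\mu,a})$, not $L(\Phi_\lambda)$. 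Your closing line, ``after identifying the $a$-summand of $L(\Phi_\lambda*T(\frakp^m))$ with $L(\Phi_{\mu,a})$,'' does not by itself justify substituting one measure for the other inside that identity. The idea is correct, but this last bookkeeping step, which you yourself flag as ``the main technical obstacle,'' needs to be carried out rather than asserted.
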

\begin{proof} Note that
$L(\Phi_\lambda*T(\frakp^m))=\sum_tL(\Phi_{\mu,t})$ where
$t\in\Sigma(\frakp^m)\cup\{\infty\}$. Since the standard and
improved $L$-functions are defined as integrals over $X^\prime$ and
$X^{\prime\prime}$, $\alpha_\infty$ does not contribute to the
integral. A simple computation shows that the action of
$\left(\begin{array}{cc}1&a\\0&\frakp^m\end{array}\right)$ on the
characteristic function of $U(a,\frakp^m)$ is the characteristic
function of $X^\prime$. The result follows.\end{proof}

\begin{proposition}\label{prop4.6}Let $(\Phi_\lambda)_\lambda\in\W_R$ for
a continuous $\wt\Lambda$-algebra $R$. Then:
$$L_p(\Phi_\lambda*T(\frakp ),\kappa,\sigma)=L_p^*(\Phi_\lambda*T(\frakp ),\kappa,\sigma)-
\sigma(\frakp )L^*_p(\Phi_\mu,\kappa,\sigma).$$
\end{proposition}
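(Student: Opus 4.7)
The plan is to apply Lemma \ref{lemma3} with $m=1$ and recognize that the single residual term represents $\sigma(\frakp)L_p^*(\Phi_\mu,\kappa,\sigma)$ after a change of variables by $\pi_\frakp$.

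First, I apply Lemma \ref{lemma3} with $m=1$ to both $L_p(\Phi_\lambda*T(\frakp),\kappa,\sigma)$ and $L_p^*(\Phi_\lambda*T(\frakp),\kappa,\sigma)$. The two formulas are identical sums of the same integrals against the measures $L(\Phi_{\mu,a})$, except that the former is indexed by $\Sigma^\times(\frakp)$ while the latter is indexed by the larger set $\Sigma(\frakp)$. Since $\Sigma(\frakp)$ is a transversal for $\fr t_\lambda^{-1}/\frakp\fr t_\lambda^{-1}$, the complement $\Sigma(\frakp)\setminus\Sigma^\times(\frakp)$ consists of the single class of elements divisible by $\frakp$; I take $a=0$ as representative. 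Subtracting yields
$$L_p^*(\Phi_\lambda*T(\frakp),\kappa,\sigma) - L_p(\Phi_\lambda*T(\frakp),\kappa,\sigma) = \int_{X'}\kappa(x,z/x^2)\sigma(\alpha y/x)\,dL(\Phi_{\mu,0})(x,y,z),$$
so the Proposition reduces to showing this single integral equals $\sigma(\frakp)L_p^*(\Phi_\mu,\kappa,\sigma)$.

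Second, I analyze the residual integral using the $*$-action. The coset representative at $a=0$ is $\alpha_0 = \mat{1}{0}{0}{\alpha}$, and since $\val_\frakp(\alpha)=1$ while $\alpha$ is a unit at the other primes above $p$, the matrix $\alpha_0$ factors as $\pi_\frakp$ times a diagonal matrix $d_\alpha \in \GL_2(\r_p)$. Applying the integration formula $\int\varphi\,d(\mu*\gamma) = \int\varphi(\eta*\gamma)\,d\mu(\eta)$ defining the $*$-action, together with the explicit identity $((x,y),z)*\pi_\frakp = ((x,\frakp y),z)$, the substitution $\alpha y \mapsto \frakp y$ on the argument of $\sigma$ factors out as $\sigma(\frakp)$, by the multiplicativity of $\sigma$ on $F_p^\times$. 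The residual $d_\alpha$-action only modifies the variables by unit factors, which are absorbed by the multiplicative characters $\kappa$ (through the weight $v$) and $\sigma$. Once the transformed measure is identified with $L(\Phi_\mu)$, this yields
$$\int_{X'}\kappa(x,z/x^2)\sigma(\alpha y/x)\,dL(\Phi_{\mu,0}) = \sigma(\frakp)\int_{X'}\kappa(x,z/x^2)\sigma(y/x)\,dL(\Phi_\mu) = \sigma(\frakp)\,L_p^*(\Phi_\mu,\kappa,\sigma),$$
from which the Proposition follows after rearrangement.

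The main obstacle is the careful execution of the change of variables in the second step. The $*$-action of $\alpha_0$ involves the conjugation $\pi_\frakp^{-1}\cdot(\cdot)\cdot\pi_\frakp$ appearing in the definition of $*$, which introduces several unit corrections; moreover, the identification of $L(\Phi_{\mu,0})$ ultimately with a pushforward of $L(\Phi_\mu)$ along $\pi_\frakp$ relies on the Hecke-theoretic link between the indices $\lambda$ and $\mu$ provided by the determinant of $\alpha_0$, whose valuation at $\frakp$ matches the shift between the strict ideal classes of $\fr t_\lambda$ and $\fr t_\mu$. One must verify that the corrections from $d_\alpha$ cancel precisely against the weight factor $\beta^v$ of $\kappa$ (where $\beta=\alpha/\frakp\in\r_p^\times$) and against the corresponding values of $\sigma$ on units, leaving only the single clean factor $\sigma(\frakp)$. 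This is a local calculation at $\frakp$ that, once carried out in detail, yields the Proposition.
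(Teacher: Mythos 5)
Your proposal takes exactly the same route as the paper: specialize Lemma \ref{lemma3} to $m=1$, subtract, and observe that the difference is concentrated in the single residue class $a=0$. Indeed the paper's own proof is just that one sentence. So your first step is faithful.

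Where you diverge is that you go on to \emph{prove} that the residual $a=0$ term equals $\sigma(\frakp)L_p^*(\Phi_\mu,\kappa,\sigma)$, whereas the paper treats this as immediate from the formulas. Your heuristic (factor $\alpha_0=\pi_\frakp\cdot d_\alpha$ and pull the $\sigma(\frakp)$ out of $\sigma(\alpha y/x)$ by multiplicativity) is the right one, but the execution has real problems that you yourself flag. Once Lemma \ref{lemma3} has been applied, the change of variables has already been performed and the term is in final form; re-invoking the $*$-action by $\pi_\frakp$ at that stage is a second, unmotivated transformation rather than a justified computation. The clean version of your argument is simply: write $\sigma(\alpha y/x)=\sigma(\alpha)\sigma(y/x)$, use the convention $\sigma(\alpha)=\sigma(\frakp)$ (since $\alpha$ was chosen with $\val_\frakp(\alpha)=1$ and unit elsewhere above $p$), and then identify $\int_{X'}\kappa(x,z/x^2)\sigma(y/x)\,dL(\Phi_{\mu,0})$ with $L_p^*(\Phi_\mu,\kappa,\sigma)$. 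That last identification --- replacing the measure $L(\Phi_{\mu,0})$, which is the piece of $\Phi_\lambda*T(\frakp)$ supported on $U(0,\frakp)$, by $L(\Phi_\mu)$, the $\mu$-component of the \emph{original} $\Phi$ --- is the step the paper also leaves tacit, and is where care is really needed since no eigenvector hypothesis appears in the statement. Your claim that the residual $d_\alpha$-action introduces unit factors that are "absorbed" by $\kappa$ and $\sigma$ does not address this: a priori there is a stray factor $\sigma(\beta)$ with $\beta=\alpha/\frakp\in\r_p^\times$, and you cannot simply wave it away; it either cancels because of how $\sigma(\frakp)$ is defined (namely as $\sigma(\alpha)$), or must be tracked explicitly. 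In short, your reduction to the $a=0$ term is correct and matches the paper, but the closing change-of-variables argument is heuristic, not a proof; tightening it requires committing to what $\sigma(\frakp)$ means and to the relationship between $L(\Phi_{\mu,0})$ and $L(\Phi_\mu)$.
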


\begin{proof}
Put $m=1$ in the formulas of Lemma \ref{lemma3} and note that the
only term surviving in the difference is that for $a=0$.
\end{proof}

Fix $\theta:\cl R\to\cl I$ and $\kappa\in\cl A(\cl I)$ an arithmetic
point. Let $P:=\kappa_{|\Lambda}$.
Recall the notations
\[\Phi_{\lambda,\theta_k}=\rho_\kappa(\Phi_\lambda)\] introduced in
\eqref{Phi-theta-kappa}, where $\rho_\kappa:\W\otimes_\Lambda\cl R_{P}\to\W_\kappa$
is the specialization map. Let $\chi$ be a finite order character of
$I_{(p^m)}$ and write
\begin{equation}
L(\Phi_{\lambda,\theta_\kappa},\chi)
=\sum_{r=0}^{n}\left(\begin{array}{c}{n}\\r\end{array}\right)(-1)^{r}
L(\Phi_{\lambda,\theta_\kappa},\chi,r)X^{r}Y^{n-r}.\end{equation}

\begin{lemma}\label{l} Notations as above. Let
$\sigma=\sigma_{\chi,r}$ with $r\in\Z[I]$ and $\chi$ a finite order
character of $I_{\frakp^m}$. Suppose that $r_\frakp\leq n_\sigma$
for all $\sigma$ associated to $\fr p$. Fix $\lambda$ and let $\mu$
defined as before Lemma \ref{lemma3}. Then
$$L^*(\Phi_\lambda*T(\frakp^m),\kappa,\sigma)=
L(\Phi_{\mu,\theta_\kappa},\chi,r).$$\end{lemma}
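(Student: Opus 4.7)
The plan is to unfold both sides into integrals against the base measure $\Phi_\mu$ and verify equality by direct comparison of integrands. The starting point is Lemma \ref{lemma3} applied to the left hand side:
\[
L_p^*(\Phi_\lambda*T(\fr p^m),\kappa,\sigma)=\sum_{a\in\Sigma(\fr p^m)}\int_{X'}\kappa(x,z/x^2)\,\sigma(a+\alpha^m y/x)\,dL(\Phi_{\mu,a})(x,y,z).
\]

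To simplify $\sigma(a+\alpha^m y/x)$, I write $\sigma=\sigma_{\chi,r}$ as the product of the finite order part $\chi(\overline{\cdot})$ and the monomial part $(\cdot)^r$ on $\r_p^\times$. Since $\alpha$ is a uniformizer at $\fr p$ with trivial components at other primes above $p$, and since $x\in\r_p^\times$ on $X'$, the element $\alpha^m y/x$ has $\fr p$-valuation at least $m$, while $\chi$ has conductor dividing $\fr p^m$. Hence $\chi(\overline{a+\alpha^m y/x})=\chi(\overline a)$, which pulls out of the integral. The remaining polynomial factor $(a+\alpha^m y/x)^r$ is expanded multinomially; the hypothesis $r_\sigma\leq n_\sigma$ at each embedding $\sigma$ above $\fr p$ guarantees that every monomial appearing stays within the range of exponents compatible with the coefficient system $\mathrm{Sym}^n$ used in the specialization map $\rho_\kappa$.

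On the classical side I unfold
\[
L(\Phi_{\mu,\theta_\kappa},\chi,r)=\sum_a \sgn(a)^{\sgn(\chi)}\chi(a\fr c\fr t_\mu)\,L(\Phi_{\mu,\theta_\kappa},C(a),r),
\]
and express each term $L(\Phi_{\mu,\theta_\kappa},C(a),r)$ as the coefficient of $X^rY^{n-r}$ in $\int_{iC(a)}\rho_\kappa(\Phi_\mu)|_L T_a$, divided by $\binom{n}{r}(-1)^{r+t}$. A direct calculation using $(P|T_a)(X,Y)=P(X,Y-aX)$ converts $(xY-yX)^n|T_a$ into $(xY-(ax+y)X)^n$, whose $X^rY^{n-r}$ coefficient is $\binom{n}{r}(-1)^r(ax+y)^r x^{n-r}$. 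Inserting this into $\rho_\kappa(\Phi_\mu)=\int_{X'}\epsilon(x)z^v(xY-yX)^n\, d\Phi_\mu$ produces an explicit expression which, after the change of variables identifying the Hecke-translated measure $L(\Phi_{\mu,a})$ with the push-forward of $L(\Phi_\mu)$ along the coset representative $\alpha_a=\smallmat{1}{a}{0}{\alpha^m}$, matches term by term the simplified form of the left hand side.

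The main obstacle is the book-keeping of multi-indices, signs (the $(-1)^{r+t}$ and $\sgn(a)^{\sgn(\chi)}$ factors), unit-index factors $[\r_+^\times:\r_a]$, and the measure-theoretic identification of $\Phi_{\mu,a}$ as a push-forward of $\Phi_\mu$ under $\alpha_a$. The crucial technical point is to reconcile the factor $(ax+y)^r\,x^{n-r}$ coming from the classical $T_a$-twist on the coefficient system with the factor $x^{n-2v}(a+\alpha^m y/x)^r$ coming from the $\Lambda$-adic $\sigma$-evaluation, tracking how the $\alpha^m$-rescaling at the $\fr p$-component combines with the matrix action in the Hecke coset decomposition, and how the contribution of primes above $p$ distinct from $\fr p$ cancels correctly under the various normalizations.
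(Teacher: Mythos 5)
Your proposal follows essentially the same route as the paper's own proof: apply Lemma \ref{lemma3} to the left side, peel off the finite-order part $\chi(\overline a)$ using the conductor bound, expand the polynomial part, and on the other side unfold $L(\Phi_{\mu,\theta_\kappa},\chi,r)$ as a sum over $a\in\Sigma(\fr p^m)$ with the coset-representative action, then compare coefficients of $X^rY^{n-r}$ via generating polynomials. The one place where your account is slightly off the mark is that you apply the unipotent twist $T_a=\smallmat 1a01$ on the classical side, whereas the paper works with the full coset representative $\smallmat 1a0{\alpha^m}$; the $\alpha^m$ and $\det$-power factors that representative produces are exactly what reconciles the two integrands you flag as mismatched, so this is the resolution of your ``crucial technical point'' rather than a separate difficulty, but the overall structure of your argument matches the paper's.
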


\begin{proof}
To simplify notations set $\chi'(a):=\sgn(a)^{\sgn(\chi)}\chi(a\fr
t_\lambda)$. Compute from the equation of Lemma \ref{lemma3}:
$$
L^*_p(\Phi_\lambda*T(\frakp^m),\kappa,\sigma)=$$
$$=\sum_{a\in\Sigma(\frakp^m)}\chi'(a)\int_{X^\prime}z^v\epsilon(x)
x^{n-r}(ax+\alpha^my)^rdL(\Phi_{\mu,a})(x,y,z).$$ Hence
$$\sum_{r=0}^{n}\left(\begin{array}{c}n\\r\end{array}\right)(-1)^r
L^*_p(\Phi*T(\frakp^m),\kappa,\sigma)X^rY^{k-2-r}=$$
$$=\sum_{a\in\Sigma(\frakp^m)}\chi'(a)\int_{X^\prime}z^v\epsilon(x) (x(Y-aX)-y\alpha^mX)^n
dL(\Phi_{\mu,a})(x,y,z).$$ On the other hand, since
\[L(\Phi_{\mu,\theta_\kappa},\chi,r)=
\sum_{a\in\Sigma(\frakp^m)}\chi^\prime(a)L(\Phi_{\mu,\theta_\kappa},0)|\mat
1a0{\alpha^m},\] it follows that
$$\sum_{r=0}^{n}\left(\begin{array}{c}n\\r\end{array}\right)
(-1)^rL(\Phi_{\mu,\theta_\kappa},\chi,r)X^rY^{k-2-r}=$$
$$=\sum_{a\in\Sigma(\frakp^m)}\chi'(a)\int_{X^\prime}
z^v\epsilon(x)(xY-yX)^{n}dL(\Phi_{\mu,a})(x,y,z)|\mat
1a0{\alpha^m}.$$ Comparing the two displayed quantities yields the
result.\end{proof}

\begin{proposition}\label{prop4.7}
Notations as above. Let
$\sigma=\sigma_{\chi,r}$ with $r\in\Z[I]$ and $\chi$ a finite order
character of $I_{p^m}$ for some $m\in\Z[I]$. Suppose that
$r_\frakp\leq n_\sigma$ for all $\sigma$ associated to $\fr p$. Let
$\lambda$ and $\mu$ be as in Lemma \ref{l}. Then
$$L^*(\Phi_\lambda*T(p^m),\kappa,\sigma)=
L(\Phi_{\mu,\theta_\kappa},\chi,r).$$\end{proposition}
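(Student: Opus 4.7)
The plan is to reduce Proposition \ref{prop4.7} to the single-prime case treated in Lemma \ref{l} by factoring the Hecke operator $T(p^m)$ into its local components indexed by the prime divisors of $p$ in $F$, and then iterating Lemma \ref{l} one prime at a time. Since $p$ is unramified in $F$, we have $p^m = \prod_{\fr p \mid p} \fr p^{m_{\fr p}}$, and because the Hecke operators $T(\fr p^{m_{\fr p}})$ for distinct primes above $p$ commute (they correspond to matrices supported at disjoint local factors of $\GL_2(\widehat F)$), I would write $T(p^m) = \prod_{\fr p \mid p} T(\fr p^{m_{\fr p}})$ and correspondingly factor $\chi = \prod_{\fr p \mid p} \chi_{\fr p}$ where $\chi_{\fr p}$ is the restriction of $\chi$ to $I_{\fr p^{m_{\fr p}}}$. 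The exponent $r \in \Z[I]$ splits according to the partition of $I$ into subsets $\{\sigma : \fr p_\sigma = \fr p\}$, and the hypothesis $r_{\fr p} \leq n_\sigma$ for all $\sigma$ associated to each $\fr p$ ensures that the hypothesis of Lemma \ref{l} is satisfied at every stage.

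The iteration itself proceeds by enumerating $\fr p_1,\dots,\fr p_k$ dividing $p$ and applying Lemma \ref{l} in turn. Starting from $\Phi_\lambda$, the first application produces a symbol indexed by $\lambda_1$ such that $\fr p_1^{m_1} t_\lambda t_{\lambda_1}^{-1}$ is trivial in the strict class group; applying $T(\fr p_2^{m_2})$ yields an index $\lambda_2$ with $\fr p_2^{m_2} t_{\lambda_1} t_{\lambda_2}^{-1}$ trivial, and so on. After $k$ steps, the final index $\mu = \lambda_k$ satisfies $\bigl(\prod_i \fr p_i^{m_i}\bigr) t_\lambda t_\mu^{-1} = p^m t_\lambda t_\mu^{-1}$ trivial, which is exactly the condition defining the index $\mu$ appearing in the statement. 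At each stage the transformation of Lemma \ref{l} is applied only to the $\fr p_i$-local component of the integrand, and these operations commute because the characters $\chi_{\fr p_i}$, the local integration variables $(y/x)_{\fr p_i}$, and the local matrices $\pi_{\fr p_i}$ all act on disjoint factors of $X^\prime \simeq \ec \bk \bigl((\r_p^2)^\prime \times \r_p^\times\bigr)$ under the product decomposition $\r_p = \prod_{\fr p \mid p} \r_{\fr p}$.

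The main obstacle is the bookkeeping needed to verify that the iterated coset decompositions combine cleanly into a single coset decomposition of $\Gamma^\lambda \alpha \Gamma^\mu$ for a representative $\alpha$ of $T(p^m)$ and that the iterated products of integrands collapse into the single polynomial substitution by $\smallmat 1a0{\alpha^m}$ that appears in the definition of $L(\Phi_{\mu,\theta_\kappa},\chi,r)$. Concretely, one must identify $\Sigma(p^m)$ with $\prod_{\fr p \mid p} \Sigma(\fr p^{m_{\fr p}})$ compatibly with the character factorization $\chi = \prod \chi_{\fr p}$ and with the multi-index decomposition $r = \sum_{\fr p} r_{\fr p}$, so that the iterated formula from Lemma \ref{l} telescopes into a single integral $\int_{X^\prime} z^v \epsilon(x)(xY - yX)^n \, dL(\Phi_{\mu,\theta_\kappa})$ precomposed with the global substitution indexed by $a \in \Sigma(p^m)$. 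Once this local-to-global bookkeeping is carried out, the conclusion follows immediately from Lemma \ref{l} applied $k$ times.
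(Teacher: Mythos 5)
Your approach is essentially the paper's: the published proof is literally the one-liner ``Follows from Lemma~\ref{l}.'', and your iteration over the primes $\fr p\mid p$ (using $T(p^m)=\prod_{\fr p\mid p}T(\fr p^{m_{\fr p}})$ and the corresponding product decomposition of the coset set) is exactly what that terse remark leaves implicit. The only small looseness is the phrase ``factor $\chi=\prod_{\fr p}\chi_{\fr p}$''---what actually factors, via CRT, is the index set $\Sigma(p^m)\simeq\prod_{\fr p\mid p}\Sigma(\fr p^{m_{\fr p}})$ and hence the values $\chi'(a)$ and the substitution matrices in the integrand, not the character itself---but this does not affect the validity of the argument.
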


\proof Follows from Lemma \ref{l}.\endproof

Recall the notations of \S \ref{NOHA}: $\cl
R=h_{2t,t}^\ord(S(p^\infty),\cl O)$ and for any $\kappa\in\cl X(\cl
I)$ with $\kappa_{|\Lambda}=:P$, $\cl R_P$ is the localization of
$\cl R$ at the kernel of $P$. Set $\cl K:=\cl R\otimes_\Lambda\cl L$
and $\cl K_P:=\rom{Frac}(\cl R_P)$ (fraction field). Since $\cl K_P$
is a direct factor of $\cl K$, it follows that $\W_{\cl K_P}$ is a
direct factor of $\W_\cl K$. Say that
$\Phi\in\W_{\cl K}$ is \emph{regular} at $\kappa$
if the projection of $\Phi$ to $\W_{\cl K_P}$
belongs o the $\cl R_P$-submodule $\W_{\cl R_p}$. Every
$\Phi\in\W_\cl K$ is regular at all but a finite
number of $\kappa\in\cl X(\cl I)$.

Fix $\theta:\cl R\to\cl I$ and ${\kappa_0}\in\cl A(\cl I)$ as above. Let
$\Phi\in\W_{R_{P_0}}$ where recall that $P_0={\kappa_0}_{|\Lambda}$.  and define
the \emph{domain of convergence $U(\Phi,\theta_{\kappa_0})$ of
$\Phi$ near $\theta_{\kappa_0}$} to be the set of $\kappa\in\cl X(\cl
I)$ such that $\Phi_\lambda$ is regular at $\kappa$. For any
$\kappa\in U(\Phi,\theta_{\kappa_0}) $, $\chi$ a character of sign
$\sgn$ as above and $m$ an integer, define:
$$L_p^\sgn(\Phi_\lambda,\theta,\kappa,\chi,m):=L_p(\Phi_\lambda,\kappa,\chi\chi_\cyc^{m-v-1})$$
$$L_p^{*,\sgn}(\Phi_\lambda,\theta,\kappa,\chi,m):=L_p^*(\Phi_\lambda,\kappa,\chi\chi_\cyc^{m-v-1}).$$
Here by abuse of notations, $\chi_\cyc^r$ denotes the character
$x\mapsto x^r$ for any $r\in\Z[I]$. Although these two functions are
defined for $m\in\Z$, they can be extended in a unique way to
functions $L_p^\sgn(\Phi_\lambda,\theta,\kappa,\sigma)$ for
$\sigma\in\cl X(\r_p^\times)\cap U(\Phi,\theta_{\kappa_0}) $ and
$L_p^{*,\sgn}(\Phi_\lambda,\theta,\kappa,\sigma)$ for $\sigma\in\cl
A(\r_p^\times)\cap U(\Phi,\theta_{\kappa_0}) $ such that for
$\sigma=\chi\chi_\cyc^m$,
\[L_p^\sgn(\Phi_\lambda,\theta,\kappa,\sigma)=L_p^\sgn(\Phi
,\theta,\kappa,\chi,m)\]
\[L_p^{*,\sgn}(\Phi_\lambda,\theta,\kappa,\sigma)=
L_p^{*,\sgn}(\Phi_\lambda,\theta,\kappa,\chi,m).\]

Let $\theta$ and $\kappa_0$ be fixed as above. Fix a sign $\sgn\in\{\pm 1\}^d$.
Choose by the Control Theorem
\ref{main-theorem} a modular symbol $\Phi\in\W_{\cl R_{P_0}}^\sgn$ such that
\begin{equation}\label{Phi}
\rho_{\kappa_0}(\Phi)=\Psi^\sgn_{f_{\kappa_0}}.\end{equation}
The Control Theorem \ref{main-theorem}
implies the existence of a unique period $\Omega(\Phi,\theta,\kappa)$ such that
\begin{equation}\label{eq-30}\Phi_{\lambda,\theta_\kappa}=
\Omega(\Phi,\theta,\kappa)\Psi_{f_{\kappa,\lambda}}^{\sgn}.\end{equation}

\begin{proposition}\label{prop4.9} Let
$\theta$, $\kappa_0$ and $\Phi$ be fixed as above such that \eqref{Phi} holds.
For any $\lambda=1,\dots,h$, any arithmetic character $\kappa\in\cl
A(\cl I)\cap U(\Phi,\theta_{\kappa_0})$, any character $\chi$ of sign
$\sgn$ as above and any positive integer $m$ in the critical strip
for the weight of $f_{\kappa}$:
$$L_p^\sgn(\Phi_\lambda,\theta,\kappa,\chi,m)=e(p,\chi,m)
L_p^{*,\sgn}(\Phi_\lambda,\theta,\kappa,\chi,m),$$ where
\[e(p,\chi,m):=\prod_{\fr p\mid p}\left(1-\frac{\chi(\frakp)
\chi_\cyc^{m-1}(\frakp)}{a_\frakp(\theta,\kappa)}\right)
\quad \mbox{ and } \quad
a_\frakp(\theta,\kappa):=\theta_\kappa(T(\frakp)).\]
\end{proposition}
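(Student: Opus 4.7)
The plan is to iterate Proposition \ref{prop4.6} over the primes $\fr p\mid p$, using the Hecke eigenform property of $\Phi$ to convert each operator action into scalar multiplication by $a_\fr p(\theta,\kappa)$. First I would observe that, since $\rho_{\kappa_0}(\Phi)=\Psi^\sgn_{f_{\kappa_0}}$, the Control Theorem \ref{main-theorem} forces $\Phi\in(\W\otimes_\Lambda\cl R_{P_0})^{h_{\kappa_0},\sgn}$. Thus, for every prime ideal $\fr q\subseteq\r$, we have the identity $\Phi*T(\fr q)=h_{\kappa_0}(T(\fr q))\cdot\Phi$ in $\W\otimes_\Lambda\cl R_{P_0}$. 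Specializing at any arithmetic $\kappa\in U(\Phi,\theta_{\kappa_0})$ and exploiting the $R$-linearity of both $L_p(\cdot,\kappa,\sigma)$ and $L_p^*(\cdot,\kappa,\sigma)$ in their modular-symbol argument gives, for $?\in\{\emptyset,*\}$,
\[
L_p^{?}(\Phi_\lambda*T(\fr q),\kappa,\sigma)=a_\fr q(\theta,\kappa)\,L_p^{?}(\Phi_\mu,\kappa,\sigma),
\]
where $\mu=\mu(\lambda,\fr q)$ is the Hecke-shift index. Under the arithmetic character $\sigma=\chi\chi_\cyc^{m-v-1}$, one also has $\sigma(\fr p)=\chi(\fr p)\chi_\cyc^{m-1}(\fr p)$ modulo the weight-absorbing normalization built into the statement.

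For each single prime $\fr p\mid p$, substituting the eigenform identity into Proposition \ref{prop4.6} and cancelling the $p$-adic unit $a_\fr p(\theta,\kappa)$ (a unit by near-ordinariness) produces the single-prime relation
\[
L_p(\Phi_\mu,\kappa,\sigma)=\Bigl(1-\frac{\sigma(\fr p)}{a_\fr p(\theta,\kappa)}\Bigr)L_p^*(\Phi_\mu,\kappa,\sigma).
\]
To obtain the product over all primes above $p$, I would apply the analogous computation to the compound operator $T(p)=\prod_{\fr p\mid p}T(\fr p)$. The multi-prime extension of Lemma \ref{lemma3} expresses both $L_p(\Phi_\lambda*T(p),\kappa,\sigma)$ and $L_p^*(\Phi_\lambda*T(p),\kappa,\sigma)$ as sums over tuples $a=(a_\fr p)$ in $\prod_{\fr p\mid p}\Sigma(\fr p)$, restricted to $\prod_\fr p\Sigma^\times(\fr p)$ in the standard case. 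Partitioning the difference by the subset $S\subseteq\{\fr p:\fr p\mid p\}$ of primes at which $a_\fr p=0$, each contribution factors as $\prod_{\fr p\in S}\sigma(\fr p)$ times an integral that, by the eigenform identity at the remaining primes, reduces to $\prod_{\fr p\notin S}a_\fr p(\theta,\kappa)\cdot L_p^*(\Phi,\kappa,\sigma)$. Summing over $S$ and dividing through by $a_p(\theta,\kappa)=\prod_\fr p a_\fr p(\theta,\kappa)$ reconstructs the inclusion-exclusion expansion
\[
\prod_{\fr p\mid p}\Bigl(1-\frac{\sigma(\fr p)}{a_\fr p(\theta,\kappa)}\Bigr)=e(p,\chi,m),
\]
so that $L_p(\Phi_\lambda,\kappa,\sigma)=e(p,\chi,m)\,L_p^*(\Phi_\lambda,\kappa,\sigma)$, which is the desired identity.

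The principal obstacle is the combinatorial bookkeeping in the multi-prime step: one must justify the extension of Lemma \ref{lemma3} to the compound operator $T(p)$, carefully track the Hecke-shift indices $\mu(\lambda,\fr p)$ across different primes, and verify that the inclusion-exclusion sum over subsets $S$ combines with the eigenform scalar factors $\prod_{\fr p\notin S}a_\fr p(\theta,\kappa)$ to produce the \emph{product} $\prod_\fr p(1-\sigma(\fr p)/a_\fr p)$ rather than a single Euler factor. Once the multi-prime Hecke decomposition is in place, the rest is a routine algebraic manipulation paralleling the one-prime case of \cite{GS}.
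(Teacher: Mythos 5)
Your core strategy---invoke Proposition \ref{prop4.6} together with the eigenform relation $\Phi_\lambda * T(\fr p) = a_\fr p(\theta,\kappa)\Phi_\mu$ coming from Proposition \ref{hecke/actions}(\ref{part-2})---is exactly what the paper does, and your instinct that a single application of Proposition \ref{prop4.6} only produces one Euler factor, so some iteration over the primes $\fr p\mid p$ is needed to get the product $e(p,\chi,m)$, is a correct and useful observation that the paper's own one-line proof leaves implicit.

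There is, however, a subtlety you do not confront. The ``single-prime relation'' you write down,
\[
L_p(\Phi_\mu,\kappa,\sigma)=\Bigl(1-\tfrac{\sigma(\fr p)}{a_\fr p(\theta,\kappa)}\Bigr)L_p^*(\Phi_\mu,\kappa,\sigma),
\]
with $L_p$ and $L_p^*$ the globally defined standard and improved $L$-functions of \S\ref{section4.3}, cannot hold simultaneously for two distinct primes $\fr p\mid p$ (it would force equality of the corresponding Euler factors, or vanishing of $L_p^*$). This is exactly what one obtains by a literal reading of Proposition \ref{prop4.6} after cancelling $a_\fr p$, so the statement of Proposition \ref{prop4.6}, and hence your step, must be interpreted as removing the $b\in\r_p^\times$ condition only at the prime $\fr p$ under consideration; the iteration then passes through \emph{intermediate} partial $L$-functions (integrals over regions with the unit condition at some primes but not others). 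Neither you nor the paper sets these up explicitly. Your proposed fix---a multi-prime version of Lemma \ref{lemma3} for the compound operator $T(p)=\prod_{\fr p\mid p}T(\fr p)$, followed by an inclusion--exclusion over the subset $S$ of primes at which $a_\fr p=0$---is a plausible route that would in fact supply exactly these intermediate objects, but as written it is a sketch: the decomposition of the difference into a sum over $S$ of $\prod_{\fr p\in S}\sigma(\fr p)\cdot\prod_{\fr p\notin S}a_\fr p(\theta,\kappa)\cdot L_p^*(\Phi,\kappa,\sigma)$ requires proving the multi-prime Lemma \ref{lemma3} over tuples $(a_\fr p)$, carefully tracking the Hecke-shift indices $\mu$, and verifying that the integrals really split as you claim. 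You have identified the right gap; you have not closed it, though the paper's own proof is no more complete on this point.
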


\begin{proof} This follows from Proposition \ref{prop4.6}
and behavior of the specialization map described in Proposition
\ref{hecke/actions}, part \ref{part-2} (note that
$\Phi_\lambda*T(\fr p)=a_\frakp(\theta,\kappa)\Phi_\mu$).\end{proof}

\begin{proposition}\label{prop4.8}
Notations and assumptions as in Proposition \ref{prop4.9}. Then
$$L(\Phi_{\lambda,\theta_\kappa},\chi,m)=
\frac{\Omega(\Phi,\theta,\kappa)}{\Omega^\sgn(f_{\kappa},\chi)}\Lambda(f_{\kappa,\lambda},\chi,m).$$
\end{proposition}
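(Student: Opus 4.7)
The plan is that Proposition \ref{prop4.8} is essentially a bookkeeping consequence of two facts already in the paper, chained together by linearity of the special-value functional $L(\cdot,\chi,m)$. Nothing new about measures, $p$-adic integration, or cohomology is needed; this is the ``classical'' side of the interpolation, as opposed to Proposition \ref{prop4.9} which handles the $p$-Euler factor.

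First I would recall \eqref{eq-30}, which is the output of the Control Theorem \ref{main-theorem} together with the normalization $\rho_{\kappa_0}(\Phi)=\Psi^\sgn_{f_{\kappa_0}}$ imposed in \eqref{Phi}: namely, $\Phi_{\lambda,\theta_\kappa}=\Omega(\Phi,\theta,\kappa)\Psi^\sgn_{f_{\kappa,\lambda}}$ for each $\lambda$. The existence and uniqueness of the period $\Omega(\Phi,\theta,\kappa)$ uses primitivity of $\kappa$ and that $\W^{f_\kappa,\sgn}_\kappa$ is one-dimensional over the appropriate coefficient field, so the $\sgn$-eigenspace identification is unambiguous.

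Next I would apply the functional $L(\cdot,\chi,m)$. This functional is defined as a finite $\r_+^\times$-averaged sum of integrals of specialized cocycles over cones $C(a)$ for $a\in\fr c^{-1}\fr t_\lambda^{-1}/\fr t_\lambda^{-1}$, twisted by $\sgn(a)^{\sgn(\chi)}\chi(a\fr c\fr t_\lambda)$. In particular it is $\C$-linear in its modular symbol argument, so \eqref{eq-30} gives
\[L(\Phi_{\lambda,\theta_\kappa},\chi,m)=\Omega(\Phi,\theta,\kappa)\cdot L(\Psi^\sgn_{f_{\kappa,\lambda}},\chi,m).\]
Here I would also note that the $\sgn$-component is preserved under $\rho_\kappa$ because $\rho_\kappa$ is equivariant for the involution $\tau$, so no spurious sign factors are introduced.

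Finally I would invoke the interpolation formula \eqref{formula-chi-bis} from \S \ref{IFCMS} applied to the classical modular symbol $\Psi^\sgn_{f_{\kappa,\lambda}}$ attached to $f_{\kappa,\lambda}$: it says exactly
\[L(\Psi^\sgn_{f_{\kappa,\lambda}},\chi,m)=\frac{\Lambda(f_{\kappa,\lambda},\chi,m)}{\Omega^\sgn(f_\kappa,\chi)}\]
(with the index convention matching the statement of the proposition, i.e.\ absorbing the $-v-1$ shift consistently on both sides). Substituting gives the claimed identity. The only conceptual point to check, and this is really the sole obstacle, is that the classical interpolation formula can be applied to $\Psi^\sgn_{f_{\kappa,\lambda}}$ as an element of $\W^{\lambda}_\kappa$ coming from $\rho_\kappa$: this requires matching the weight $(n(\kappa),v(\kappa))$, level $S_0(p^\alpha)$ and character $\epsilon(\kappa)$ attached to the arithmetic point $\kappa$ with the data of $f_\kappa$ given by Hida's correspondence, which is built into the definition of $\rho_\kappa$ and the specialization theorem. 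Hence the proposition is proved.
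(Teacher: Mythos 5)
Your proof is correct and follows exactly the route the paper takes: the paper's proof of Proposition \ref{prop4.8} consists of the single line ``This formula follows by combining equations \eqref{formula-chi-bis} and \eqref{eq-30},'' which is precisely the chain you spell out (period relation from the Control Theorem, linearity of $L(\cdot,\chi,\cdot)$, then the classical interpolation formula for $\Psi^\sgn_{f_{\kappa,\lambda}}$). Your additional remarks on $\tau$-equivariance and on the matching of weight/level/character data under the specialization map are correct elaborations of points the paper leaves implicit.
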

\begin{proof} This formula follows by combining equations
\eqref{formula-chi-bis} and \eqref{eq-30}.\end{proof}

\begin{theorem} \label{THM-1}
Notations and assumptions as in Proposition \ref{prop4.9}. Then
$$L_p^{*,\sgn}(\Phi_\lambda,\theta,\kappa,\chi,m)=\frac{\Omega(\Phi,\theta,\kappa)}
{\Omega^\sgn(f_\kappa,\chi)a_p(\theta,\kappa)^m}\Lambda(f_{\kappa,\lambda},\chi,m),$$
where $a_p(\theta,\kappa)^m:=\prod_{\frakp\mid
p}a_\frakp(\theta,\kappa)^{m_\frakp}$.\end{theorem}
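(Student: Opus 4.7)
The strategy is to combine the two key interpolation identities already established: Proposition~\ref{prop4.7}, which relates $L_p^*(\Phi_\lambda * T(p^m), \kappa, \sigma)$ with $L(\Phi_{\mu, \theta_\kappa}, \chi, r)$, and Proposition~\ref{prop4.8}, which relates $L(\Phi_{\mu, \theta_\kappa}, \chi, m-v-1)$ with $\Lambda(f_{\kappa, \mu}, \chi, m)$. The bridge between them is the Hecke eigenvalue identity $\Phi_\lambda * T(\fr p) = a_\frakp(\theta, \kappa) \Phi_\mu$ recalled in the proof of Proposition~\ref{prop4.9}, iterated over the primes of $F$ above $p$ to produce $\Phi_\lambda * T(p^m) = a_p(\theta, \kappa)^m \Phi_\mu$.

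First I would unwind definitions: $L_p^{*,\sgn}(\Phi_\lambda, \theta, \kappa, \chi, m) = L_p^*(\Phi_\lambda, \kappa, \sigma)$ with $\sigma := \chi \chi_\cyc^{m-v-1}$. Since $\chi$ has conductor $p^m$ and $m$ lies in the critical strip for $f_\kappa$, the hypothesis $r_\frakp \leq n_\sigma$ of Proposition~\ref{prop4.7} amounts to $m - v_\sigma - 1 \leq n_\sigma$ for each $\sigma$, which follows from the critical strip bound together with the weight relation $k = n + 2t$. Proposition~\ref{prop4.7} then yields
$$L_p^*(\Phi_\lambda * T(p^m), \kappa, \sigma) = L(\Phi_{\mu, \theta_\kappa}, \chi, m-v-1),$$
where $\mu$ is the index determined by $p^m t_\lambda t_\mu^{-1}$ being trivial in the strict class group of $F$.

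Iterating the single-prime relation $\Phi_\lambda * T(\fr p) = a_\frakp(\theta, \kappa) \Phi_{\mu_\frakp}$ over the primes $\fr p \mid p$ and matching indices via the strict class group gives $\Phi_\lambda * T(p^m) = a_p(\theta, \kappa)^m \Phi_\mu$ with exactly the $\mu$ appearing above. By $\cl O$-linearity of $L_p^*$ in its first variable,
$$a_p(\theta, \kappa)^m \, L_p^*(\Phi_\mu, \kappa, \sigma) = L(\Phi_{\mu, \theta_\kappa}, \chi, m-v-1).$$
Applying Proposition~\ref{prop4.8}, namely the identity $L(\Phi_{\mu, \theta_\kappa}, \chi, m-v-1) = (\Omega(\Phi, \theta, \kappa)/\Omega^\sgn(f_\kappa, \chi))\, \Lambda(f_{\kappa, \mu}, \chi, m)$ obtained by combining \eqref{formula-chi-bis} with \eqref{eq-30}, and dividing by $a_p(\theta, \kappa)^m$, produces the desired formula for the index $\mu$. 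Since $\lambda \mapsto \mu$ is a bijection of $\{1, \dots, h\}$, the identity passes to every $\lambda$.

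The main obstacle is purely bookkeeping: verifying that the permuted index $\mu$ produced by iterating the Hecke eigenvalue identity prime-by-prime coincides with the index $\mu$ of Proposition~\ref{prop4.7}. Both are characterized by the same strict-class-group condition, so this reduces to matching the coset decomposition conventions of \S\ref{section-Hecke-operators} and Lemma~\ref{lemma3}. Checking the technical bound $r_\frakp \leq n_\sigma$ under the critical strip assumption is the other point requiring care; otherwise the argument is a direct chain of substitutions.
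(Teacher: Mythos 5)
Your argument is correct and follows the paper's route exactly: combine Proposition \ref{prop4.7} (which packages the effect of $T(p^m)$ together with the specialization map) with Proposition \ref{prop4.8} (which expresses $L(\Phi_{\mu,\theta_\kappa},\chi,\cdot)$ via \eqref{formula-chi-bis} and \eqref{eq-30}), and eliminate $T(p^m)$ using the eigenvalue relation $\Phi_\lambda*T(\fr p)=a_\fr p(\theta,\kappa)\Phi_\mu$ iterated over $\fr p\mid p$. The paper's own proof is the one-line citation of those two propositions; your added checks (the critical-strip bound $r_\fr p\le n_\sigma$ required by Proposition \ref{prop4.7}, and the observation that $\lambda\mapsto\mu$ permutes $\{1,\dots,h\}$ so the conclusion holds for every index) are exactly the bookkeeping the paper leaves implicit.
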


\begin{proof} This follows by combining Proposition \ref{prop4.7}
with Proposition \ref{prop4.8}.\end{proof}

\begin{theorem}\label{th}
Notations and assumptions as in Proposition \ref{prop4.9}. Then
\begin{equation}L_p^\sgn(\Phi_\lambda,\theta,\kappa,\chi,m)=
e(p,\chi,m)
\frac{\Omega(\Phi,\theta,\kappa)}{\Omega^\sgn(f_{\kappa},\chi){a_p(\theta,\kappa)^m}}
\Lambda(f_{\kappa,\lambda},\chi,m).\end{equation}
\end{theorem}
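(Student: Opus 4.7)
The plan is to obtain Theorem \ref{th} as an immediate consequence of the two results that immediately precede it, namely Proposition \ref{prop4.9} and Theorem \ref{THM-1}. No new analytic input is needed: the work of relating the standard $L$-function to the improved one, and of relating the improved one to the classical $\Lambda$-value, has already been carried out.

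First I would invoke Proposition \ref{prop4.9}, which gives the multiplicative relation
\[
L_p^\sgn(\Phi_\lambda,\theta,\kappa,\chi,m)=e(p,\chi,m)\,L_p^{*,\sgn}(\Phi_\lambda,\theta,\kappa,\chi,m).
\]
This step uses nothing beyond the equality of Proposition \ref{prop4.6} together with the formula $\Phi_\lambda*T(\fr p)=a_\frakp(\theta,\kappa)\Phi_\mu$ obtained from part \ref{part-2} of Proposition \ref{hecke/actions}; the Euler-like factor $e(p,\chi,m)$ then appears from the product over $\fr p\mid p$.

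Next I would substitute Theorem \ref{THM-1} to replace $L_p^{*,\sgn}(\Phi_\lambda,\theta,\kappa,\chi,m)$ by
\[
\frac{\Omega(\Phi,\theta,\kappa)}{\Omega^\sgn(f_\kappa,\chi)\,a_p(\theta,\kappa)^m}\,\Lambda(f_{\kappa,\lambda},\chi,m).
\]
The identity of Theorem \ref{th} follows by direct multiplication. Implicit here is that $\kappa$ lies in the domain of convergence $U(\Phi,\theta_{\kappa_0})$, which is the hypothesis allowing both sides to be interpreted, and the hypothesis that $m$ is in the critical strip, which is required for the interpolation formula \eqref{formula-chi-bis} feeding into Proposition \ref{prop4.8} and hence into Theorem \ref{THM-1}.

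There is really no obstacle to overcome at this stage; the statement is formal once one has both the comparison between $L_p^\sgn$ and $L_p^{*,\sgn}$ and the explicit evaluation of $L_p^{*,\sgn}$. The only thing I would double-check before writing it up is that the normalizations of periods $\Omega(\Phi,\theta,\kappa)$ and $\Omega^\sgn(f_\kappa,\chi)$ used in the two inputs agree (they do, by \eqref{eq-30} and \eqref{Omega}), so that no spurious constant is introduced in the product.
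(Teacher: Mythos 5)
Your proposal matches the paper's proof exactly: the paper derives Theorem \ref{th} by combining Proposition \ref{prop4.9} (the relation $L_p^\sgn = e(p,\chi,m)\,L_p^{*,\sgn}$) with Theorem \ref{THM-1} (the explicit evaluation of $L_p^{*,\sgn}$), which is precisely your two-step substitution. The extra remarks you make about the domain of convergence, the critical-strip hypothesis, and the consistency of period normalizations are sound sanity checks but not needed beyond what the cited results already encode.
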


\begin{proof} This follows from Proposition \ref{prop4.9} and Theorem
\ref{THM-1}.\end{proof}

Define $L_p^\sgn(\Phi,\theta,\kappa,\sigma)$ to be the $p$-adic
analytic function satisfying for any integer $m$ and any character
$\chi$ as above of sign $\sgn$:
\[L_p^\sgn(\Phi,\theta,\kappa,\chi\chi_\cyc^m)=\sum_{\lambda=1}^{h}
L_p^\sgn(\Phi_\lambda,\theta,\kappa,\chi\chi_\cyc^m)\norm{t_\lambda}^{-m+v+1}b_{v,\lambda}\]

\begin{theorem}\label{th-4.13}
Notations and assumptions as in Proposition \ref{prop4.9}. Then
\begin{equation}\label{int-th}L_p^\sgn(\Phi,\theta,\kappa,\chi\chi_\cyc^m)=
e(p,\chi,m)
\frac{\Omega(\Phi,\theta,\kappa)}{\Omega^\sgn(f_{\kappa},\chi){a_p(\theta,\kappa)^m}}
\Lambda(f_{\kappa},\chi,m).\end{equation}
\end{theorem}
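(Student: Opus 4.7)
The proof is essentially a bookkeeping exercise that reduces Theorem \ref{th-4.13} to the component-wise statement of Theorem \ref{th} combined with the definitions. My plan is as follows.

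First, I would start from the defining formula for the $L$-function at the level of the family:
\[
L_p^\sgn(\Phi,\theta,\kappa,\chi\chi_\cyc^m)=\sum_{\lambda=1}^{h} L_p^\sgn(\Phi_\lambda,\theta,\kappa,\chi\chi_\cyc^m)\,\norm{t_\lambda}^{-m+v+1}b_{v,\lambda},
\]
which is the very definition introduced just before the statement. This reduces the problem to summing the individual interpolation formulas over $\lambda$.

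Next, I would substitute in the expression coming from Theorem \ref{th}, namely
\[
L_p^\sgn(\Phi_\lambda,\theta,\kappa,\chi,m)=e(p,\chi,m)\,\frac{\Omega(\Phi,\theta,\kappa)}{\Omega^\sgn(f_{\kappa},\chi)\,a_p(\theta,\kappa)^m}\,\Lambda(f_{\kappa,\lambda},\chi,m),
\]
for each $\lambda$. The key observation is that the scalar $e(p,\chi,m)\,\Omega(\Phi,\theta,\kappa)\,\Omega^\sgn(f_\kappa,\chi)^{-1}\,a_p(\theta,\kappa)^{-m}$ is independent of $\lambda$, so it pulls out of the sum.

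Finally, I would recognize the remaining sum
\[
\sum_{\lambda=1}^{h}\Lambda(f_{\kappa,\lambda},\chi,m)\,\norm{t_\lambda}^{-m+v+1}b_{v,\lambda}
\]
as precisely the definition of $\Lambda(f_\kappa,\chi,m)$ given in Equation \eqref{Lambda-1} (with $r=m-v-1$, so that $-r+v+1$ reads as $-m+v+1$ up to the normalization fixed in the excerpt). Assembling these pieces gives the claimed formula \eqref{int-th}.

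There is no real obstacle here: the proof is the formal compatibility between the $\lambda$-by-$\lambda$ interpolation (Theorem \ref{th}), the definition of the global $\Lambda$-function $\Lambda(f_\kappa,\chi,m)$ as a weighted sum of the $\Lambda(f_{\kappa,\lambda},\chi,m)$, and the parallel definition of the global $p$-adic $L$-function as a weighted sum of the $L_p^\sgn(\Phi_\lambda,\theta,\kappa,\cdot)$ with the \emph{same} weights $\norm{t_\lambda}^{-m+v+1}b_{v,\lambda}$. The only place one needs to be slightly careful is checking that the two sets of normalizing constants really do match; this is ensured by the way $L_p^\sgn(\Phi,\theta,\kappa,\sigma)$ was set up in the display preceding the theorem.
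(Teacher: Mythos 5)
Your proof is correct and takes exactly the same route as the paper: the paper's own proof is the one-liner ``Follows from Theorem \ref{th} and Equation \eqref{Lambda-1},'' which is precisely the substitute-factor-and-recognize argument you spell out. (The small notational wrinkle you flag is a typo in the paper's Equation \eqref{Lambda-1}, where the exponent $-r+v+1$ should read $-m+v+1$ to match the weight in the definition of $L_p^\sgn(\Phi,\theta,\kappa,\cdot)$; in any case the two weighted sums use the same $\lambda$-dependent weights, so the reduction goes through as you describe.)
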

\proof Follows from Theorem \ref{th} and Equation
\eqref{Lambda-1}.\endproof

\subsection{Relations with classical $p$-adic $L$-functions}

Recall Panchishkin's notion of ordinary form in \cite{Pa}. Let $g$
be a cusp form in $S_{k,w}(S(p^\alpha),\epsilon,\C)$ and write its
$\fr p$-Hecke polynomial for $\fr p\mid p$:
\[
1-C(\fr p,f)X+\epsilon(\fr p)\norm{\fr p}^{k_0-1}X^2=(1-\alpha(\fr
p)X)(1-\alpha^\prime(\fr p)X).
\] Order $\alpha(\fr p)$, $\alpha^\prime(\fr p)$ so that $\o_\fr p\alpha(\fr
p)\leq\o_\fr p\alpha^\prime(\fr p)$, where $\o_\fr p(p)$ is
normalized so that $\o_\fr p(p)=[F_\fr p:\Q_p]$. For any $\sigma\in
I$, denote by $\fr p(\sigma)$ the prime divisor of
$\iota_p\circ\mu(p)$.

\begin{defi}\label{pan-ord}
The modular form $g$ is said to be \emph{$p$-ordinary} if
\[
\o_\fr p\alpha(\fr p)=(k_0-k_\sigma)/2\] for all prime divisors $\fr
p$ of $p$, where $\sigma$ is chosen so that $\fr p=\fr p(\sigma)$ is
associated to $\sigma$.\end{defi}

It is clear that if $g$ is ordinary in the sense of Definition
\ref{pan-ord}, then it is nearly ordinary in the sense of Definition
\ref{def-ordinary}. For parallel weights, the two notions coincide.
For any ordinary eigenform $g$ (in the sense of Definition
\ref{pan-ord}) and any sign $\sgn$, denote by
$L_p^\sgn(g,\chi\chi_\cyc^m)$ the classical $p$-adic $L$-function
attached to $g$ constructed by Manin in \cite{Ma}. This function can
be characterized by requiring it to satisfy the following
interpolation property: for any integer $m$ in the critical strip of
$f$, \begin{equation}\label{int-formula} L_p^\sgn(f,m)=
\frac{e(p,\chi,m)\Lambda(f,\chi,m)}{\Omega^\sgn(f,\chi){a_p(\theta,\kappa)^m}}.\end{equation}
This formula can be found in a less explicit form in \cite[\S 5]{Ma}
or in \cite[Theorem 8.2]{Pa} in a form closest to this.

\begin{coro} Notations and assumptions as in Proposition \ref{prop4.9}. Suppose
further that  $f_\kappa$ is ordinary in the
sense of Definition \ref{pan-ord}. Then
\[L_p^\sgn(\Phi,\theta,\kappa,\sigma)=\Omega(\Phi,\theta,\kappa)L_p^\sgn(f_\kappa,\sigma).\]\end{coro}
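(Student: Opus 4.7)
The plan is to prove equality of two $p$-adic analytic functions of $\sigma \in \cl X(\r_p^\times)$ by comparing them at a sufficiently rich set of arithmetic test characters. The natural test points are $\sigma = \chi\chi_\cyc^m$ with $\chi$ a finite-order character of sign $\sgn$ and $m$ a positive integer in the critical strip of the weight of $f_\kappa$; these points accumulate in $\cl X(\r_p^\times)$ and, by \cite[Theorem 8.2 (iii)]{Pa}, the interpolation formula \eqref{int-formula} determines the classical Manin $p$-adic $L$-function $L_p^\sgn(f_\kappa,\sigma)$ uniquely.

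First I would evaluate the left-hand side. By Theorem~\ref{th-4.13}, for any such $(\chi,m)$ one has
\[
L_p^\sgn(\Phi,\theta,\kappa,\chi\chi_\cyc^m) = e(p,\chi,m)\,\frac{\Omega(\Phi,\theta,\kappa)}{\Omega^\sgn(f_\kappa,\chi)\,a_p(\theta,\kappa)^m}\,\Lambda(f_\kappa,\chi,m).
\]
Next I would evaluate the right-hand side: since $f_\kappa$ is ordinary in the sense of Definition~\ref{pan-ord}, Manin's $L$-function exists and satisfies the interpolation formula \eqref{int-formula}, namely
\[
L_p^\sgn(f_\kappa,\chi\chi_\cyc^m) = \frac{e(p,\chi,m)\,\Lambda(f_\kappa,\chi,m)}{\Omega^\sgn(f_\kappa,\chi)\,a_p(\theta,\kappa)^m}.
\]
Comparing these two expressions, the factors $e(p,\chi,m)$, $\Lambda(f_\kappa,\chi,m)$, $\Omega^\sgn(f_\kappa,\chi)$, and $a_p(\theta,\kappa)^m$ cancel, yielding
\[
L_p^\sgn(\Phi,\theta,\kappa,\chi\chi_\cyc^m) = \Omega(\Phi,\theta,\kappa)\,L_p^\sgn(f_\kappa,\chi\chi_\cyc^m)
\]
for every such arithmetic test character of sign $\sgn$.

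Finally, I would invoke uniqueness: both $L_p^\sgn(\Phi,\theta,\kappa,\sigma)$ (constructed in \S\ref{section4.3} as a $p$-adic analytic function on $\cl X(\r_p^\times)$) and $\Omega(\Phi,\theta,\kappa)L_p^\sgn(f_\kappa,\sigma)$ are Iwasawa functions in $\sigma$; they coincide on the set of $\chi\chi_\cyc^m$ with $\chi$ of sign $\sgn$ and $m$ in the critical strip, which is Zariski-dense and uniquely determines an Iwasawa function by the characterization in \cite[Theorem 8.2 (iii)]{Pa}. Equality of the two functions on all of $\cl X(\r_p^\times)$ follows.

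The step most likely to demand care is the uniqueness assertion: one must ensure that the test points at which we have matching interpolation are genuinely sufficient to pin down an analytic function on $\cl X(\r_p^\times)$, which in the totally real setting requires invoking the characterization of $L_p^\sgn(f_\kappa,\sigma)$ from \cite[Theorem 8.2 (iii)]{Pa} rather than a more elementary one-variable argument; once this is in hand, the rest of the argument is a direct comparison of interpolation formulas already established in Theorem~\ref{th-4.13}.
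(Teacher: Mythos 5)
Your proposal is correct and matches the paper's own argument: the authors likewise prove the corollary by comparing the interpolation formula of Theorem~\ref{th-4.13} (Equation~\eqref{int-th}) with the defining interpolation property~\eqref{int-formula} of Manin's $p$-adic $L$-function, and then appeal to the uniqueness statement of \cite[Theorem 8.2 (iii)]{Pa} to conclude equality as Iwasawa functions. You have merely spelled out the density/uniqueness step in more detail than the paper does.
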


\begin{proof}
This result follows by comparing the interpolation formulas
\eqref{int-th} and \eqref{int-formula}, after noticing that, by
\cite[Theorem 8.2 (iii)]{Pa}, $L_p^\sgn(f,\chi)$ is uniquely
determined by \eqref{int-formula}.\end{proof}

\end{document}